\documentclass[onefignum,onetabnum]{siamart190516}  



\usepackage{amsfonts}
\usepackage{amsopn}
\usepackage{amsmath}
\usepackage{amssymb}
\usepackage{multirow}
\usepackage{tabularx}
\usepackage{hhline}
\usepackage{scalerel,stackengine}
\usepackage{arydshln}
\usepackage[normalem]{ulem}
\usepackage{soul}
\usepackage{tikz}
\usepackage{pgf}
\usepackage{collcell}
\usepackage[utf8]{inputenc}
\usepackage{geometry}
\usepackage{multicol}
\usepackage{graphbox}
\usepackage{setspace}
\usepackage{xspace}

\newcolumntype{E}{>{\collectcell\usermacro}c<{\endcollectcell}}
\newcommand\usermacro[1]{\pgfmathparse{10000*#1}\pgfmathprintnumber\pgfmathresult}

\usepackage{algorithmicx}
\usepackage{algorithm}
\usepackage{algpseudocode}

\makeatletter
\patchcmd{\ALG@step}{\addtocounter{ALG@line}{1}}{\refstepcounter{ALG@line}}{}{}
\newcommand{\ALG@lineautorefname}{Step}
\makeatother

\usepackage{cite}
\usepackage[shortlabels]{enumitem}


%
%
\DeclareMathAlphabet{\mathup}{OT1}{\familydefault}{m}{n}

\DeclareSymbolFont{yhlargesymbols}{OMX}{yhex}{m}{n}
\DeclareMathAccent{\wideparen}{\mathord}{yhlargesymbols}{"F3}
%

%
\xdefinecolor{green}{rgb}{0.04, 0.85, 0.32}
\xdefinecolor{darkgreen}{rgb}{0.24, 0.7, 0.44}
\xdefinecolor{mint}{rgb}{0.24, 0.71, 0.54}
\xdefinecolor{officegreen}{rgb}{0.0, 0.5, 0.0}
\xdefinecolor{napiergreen}{rgb}{0.16, 0.5, 0.0}
\xdefinecolor{maroon}{rgb}{0.65,0.06,0.17}
\xdefinecolor{blue}{rgb}{0,0.2,0.6}
\xdefinecolor{phthalogreen}{rgb}{0.07,0.21,0.14}
\definecolor{grassgreen}{RGB}{92,135,39}

\crefname{algocf}{alg.}{algs.}
\Crefname{algocf}{Algorithm}{Algorithms}

%


%
%

\newcommand{\logdet}[1]{\log\det\left(#1\right)}                    
\newcommand{\diag}[1]{\mathsf{diag}\left(#1\right)}                 
                                         %
                                     %
\newcommand{\del}[2]{\frac{\partial{#1}}{\partial{#2}}}             
\newcommand{\mat}[1]{\mathbf{{#1}}}                                 
\renewcommand{\vec}[1]{\mathbf{{#1}}}                                 

\DeclareMathOperator*{\argmin}{arg\,min}  
\DeclareMathOperator*{\argmax}{arg\,max}  


\newcommand{\trunc}{\mat{P}}       
\newcommand{\Trunc}[2]{\trunc_{#1}{\left(#2\right)}}       
\newcommand{\proj}{\mat{L}}       

%
\newcommand{\regpenalty}{\alpha}                    
\newcommand{\penaltyfunc}{\Phi}                    
\newcommand{\penaltyfunction}[1]{\penaltyfunc({#1})}                    
\newcommand*{\tran}{^{\mkern-1.5mu\mathsf{T}}}                
\newcommand*{\adj}{^{\mkern-1.5mu\mathsf{*}}}                 
\newcommand*{\inv}{^{\mkern-1.5mu\mathsf{-1}}}                
\newcommand{\inverse}[1]{\Bigl(#1\Bigr)\inv}                  
\newcommand{\pseudoinverse}[1]{\Bigl(#1\Bigr)^{\dagger}}          
\newcommand{\domain}{\mathcal{D}}                             
\newcommand{\trace}{\mathrm{Tr}}                              
\newcommand{\Trace}[1]{\trace \left(#1\right)}                
\newcommand{\abs}[1]{\left| {#1} \right|}                  
\newcommand{\wnorm}[2]{\left\| {#1} \right\|_{#2}}            
\newcommand{\sqwnorm}[2]{\left\| {#1} \right\|^2_{#2}}        

%
\newcommand\restr[2]{{ \left.\kern-\nulldelimiterspace        
                     {#1}\vphantom{\big|} \right|_{#2}}}

%
\newcommand{\Rnum}{\mathbb{R}}  
%

%
%
\newcommand{\xcont}{u}                             




\newcommand{\y}{\mathbf{y}}                        
\newcommand{\obs}{\y}                              

\newcommand{\param}{\vec{\theta}}                  
\newcommand{\iparam}{\param}                       
\newcommand{\iparprior}{\iparam_{\rm pr}}          
\newcommand{\iparb}{\iparprior}                    
\newcommand{\iparpost}{\iparam_{\rm post}^\obs}    
\newcommand{\ipara}{\iparpost}                     

\newcommand{\ipartrue}{\iparam_{\rm true}}
%

%

\newcommand{\Nobs}{\textsc{N}_{\rm obs}}                        
%
\newcommand{\Nens}{\textsc{N}_{\rm ens}}                        
\newcommand{\nobs}{n_{t}}                                  
\newcommand{\nobstimes}{\nobs}                             
\newcommand{\Nsens}{{n_{\rm s}}}                         
\newcommand{\budget}{\tau}                         

\newcommand{\ncr}[2]{C(#1, #2)}
%

%
\newcommand{\Cparamprior}{\mat\Gamma_{{\rm pr}}}                
\newcommand{\Cparampost}{\mat\Gamma_{{\rm post}}}               
\newcommand{\Cobsnoise}{\mat{\Gamma}_{ {\rm noise}}}            
\newcommand{\Cobsnoiseinv}{\mat{\Gamma}\inv_{ {\rm noise}}}            
%
\newcommand{\Cparampriormat}{\Cparamprior}                      
\newcommand{\Cparampostmat}{\Cparampost}                        



%
%

\newcommand{\Fcont}{\mathcal{F}}                                 
\newcommand{\F}{\mathbf{F}}                                      

\newcommand{\Fadj}{\F\adj}

           %



%

\newcommand{\obj}{\mathcal{J}}            
\newcommand{\stochobj}{\Upsilon}          

\newcommand{\FIM}{\mathsf{FIM}}
\newcommand{\utilityfunc}{\mathcal{U}}

\newcommand{\uncertainparam}{\llam}
\newcommand{\uncertainparamset}{\Lambda}
\newcommand{\uncertainparamsample}{\widehat{\Lambda}}
\newcommand{\WCopt}{^{\mathup{WC-opt}}}  

\newcommand{\func}{f}         

%

%
\newcommand{\Prob}{\mathbb{P}}                                   
\newcommand{\CondProb}[2]{\mathbb{P}\left(#1|#2 \right)}  
\newcommand{\GM}[2]{\mathcal{N}\!\left( {#1}, {#2}\right)}       


\newcommand{\like}{\mathcal{L}}                          
\newcommand{\Like}[2]{\like{\left(#1|#2\right)}}                                  


\newcommand{\Expect}[2]{\mathbb{E}_{#1}{\Bigl[ #2 \Bigr]} }     
%

%

%

%
%

%

%

%


%

%

\newcommand{\baseline}{b}
%

%


\newcommand{\hyperparam}{\vec{p}}     
\newcommand{\design}{\boldsymbol{\zeta}}                                       
                                       
\newcommand{\weightfunc}{\omega}                                    
\newcommand{\varweightfunc}{\varpi}    
\newcommand{\designmat}{\mat{W}}                                    
\newcommand{\wdesignmat}{\designmat_{\Gamma}}                       

%



%

%
%

%

\newcommand{\llam}{ {\boldsymbol\lambda} }

%

%

%

%


%

%


%

%
\stackMath
\newcommand\reallywidehat[1]{%
\savestack{\tmpbox}{\stretchto{%
  \scaleto{%
    \scalerel*[\widthof{\ensuremath{#1}}]{\kern-.6pt\bigwedge\kern-.6pt}%
    {\rule[-\textheight/2]{1ex}{\textheight}}
  }{\textheight}%
}{0.5ex}}%
\stackon[1pt]{#1}{\tmpbox}%
}
\parskip 1ex

%


%
%

\newcommand{\pyoed}{{PyOED}\xspace}

%

\newcommand*{\opt}{^{\mkern-1.5mu\mathsf{opt}}}               

%
%
\newcommand{\commentout}[1]{\iffalse {#1} \fi}

\newcommand{\sven}[1]{\textcolor{blue}{Sven: #1}\marginpar{\textcolor{blue}{SL}}}

\newcommand{\ahmed}[1]{\textcolor{officegreen}{Ahmed:
#1}\marginpar{\textcolor{officegreen}{AA}}}

%

\ifpdf
  \DeclareGraphicsExtensions{.eps,.pdf,.png,.jpg}
\else
  \DeclareGraphicsExtensions{.eps}
\fi


\newsiamremark{remark}{Remark}
\newsiamremark{hypothesis}{Hypothesis}
\crefname{hypothesis}{Hypothesis}{Hypotheses}
\newsiamthm{claim}{Claim}
\crefname{lemma}{Lemma}{Lemmas}

\patchcmd{\SetTagPlusEndMark}{$}{}{}{}
\patchcmd{\SetTagPlusEndMark}{$}{}{}{}

\headers{Robust A-Opt OED}{A. Attia, S. Leyffer,  and  T.  Munson}

\title{Robust A-Optimal Experimental Design for Bayesian Inverse Problems\thanks{Submitted to the editors \today.
\funding{
  This material is based upon work supported in part by the U.S. Department of Energy, Office of Science, Office of Advanced Scientific Computing Research, Scientific Discovery through Advanced Computing (SciDAC) Program through the FASTMath Institute under contract number DE-AC02-06CH11357 at Argonne National Laboratory.}}
}

\author{Ahmed Attia\thanks{Mathematics and Computer Science Division,
                   Argonne National Laboratory, USA
                  ( \email{attia@mcs.anl.gov} ).}
\and Sven Leyffer\thanks{Mathematics and Computer Science Division,
                   Argonne National Laboratory, USA
                   ( \email{leyffer@anl.gov} ).}
\and Todd Munson\thanks{Mathematics and Computer Science Division,
                   Argonne National Laboratory, USA
                   ( \email{tmunson@mcs.anl.gov} ).}
}



\setlength{\textwidth}{1.01\textwidth}  

\newif\ifshowdetails
\newif\ifshowextensivedetails
\newif\ifshowresults
\newif\ifshowalgorithms
\newif\ifshowappendix
\newif\ifshowall

\showalltrue

\ifshowall
  \showdetailstrue
  \showextensivedetailstrue
  \showresultstrue
  \showalgorithmstrue
  \showappendixtrue
\else
  \showdetailsfalse
  \showextensivedetailsfalse
  \showresultstrue
  \showalgorithmsfalse
  \showappendixtrue
\fi

\ifpdf
\hypersetup{
  pdftitle={Robust A-Optimal Design for Bayesian Inversion},
  pdfauthor={A. Attia, S. Leyffer, and T. Munson}
}
\fi




\begin{document}


\maketitle

\begin{abstract}
  Optimal design of experiments for Bayesian inverse problems has recently
  gained wide popularity and attracted much attention, 
  especially in the computational science and Bayesian inversion communities.
  An optimal design maximizes a predefined utility
  function that is formulated in terms of the elements of an inverse problem,
  an example being optimal sensor placement for parameter identification.
  The state-of-the-art algorithmic approaches following this simple formulation 
  generally overlook misspecification of the elements of the inverse problem, 
  such as the prior or the measurement uncertainties.
  This work presents an efficient algorithmic approach for designing 
  optimal experimental design schemes for Bayesian inverse problems such that the optimal design is
  robust to misspecification of elements of the inverse problem.
  Specifically, we consider a worst-case scenario approach for the uncertain
  or misspecified parameters, formulate robust objectives, and propose
  an algorithmic approach for optimizing such objectives. Both relaxation and
  stochastic solution approaches are discussed with detailed analysis and insight into 
  the interpretation of the problem and the proposed algorithmic approach. 
  Extensive numerical experiments to validate and analyze the proposed approach
  are carried out for sensor placement in a parameter identification problem.
\end{abstract}

\begin{keywords}
    Design of experiments, 
    Bayesian inversion, 
    stochastic learning, 
    binary optimization, 
    robust optimization. 
\end{keywords}

\begin{AMS}
  62K05, 35Q62, 62F15, 35R30, 35Q93, 65C60, 93E35
\end{AMS}



\section{Introduction} 
  \label{sec:introduction}
  Inverse problems play a fundamental rule in simulation-based
  prediction and are instrumental in scientific discovery.
  Data assimilation provides a set of tools tailored for solving inverse
  problems in large- to extreme-scale settings and producing accurate, trustworthy
  predictions; see, 
  for example,\cite{bannister2017review,daley1993atmospheric,navon2009data,attia2016reducedhmcsmoother,attia2015hmcfilter}.
  The quality of the solution of an inverse problem is greatly influenced by the
  quality of the observational configurations and the collected data.
  Optimal experimental design (OED) provides the tools for optimally designing
  observational configurations and data acquisition
  strategies~\cite{cox1958planning,Pukelsheim93,FedorovLee00,Ucinski00,Pazman86} in order to optimize 
  a predefined objective, for example, to maximize
  the information gain from an experiment or observational data.
  OED has seen a recent surge of interest in the computational science and
  Bayesian inversion communities; see, for example,~\cite{HaberHoreshTenorio08,HaberMagnantLuceroEtAl12,HaberHoreshTenorio10,HuanMarzouk13,bui2013computational,alexanderian2016bayesian,AlexanderianSaibaba17,AlexanderianPetraStadlerEtAl16,AlexanderianPetraStadlerEtAl14,alexanderian2021optimal,attia2018goal,attia2022optimal}.

  Bayesian OED approaches define the optimal design as the one that
  maximizes the information gain from the observation and/or experimental 
  data in order to maximize the quality of the
  inversion parameter, that is, the solution of the Bayesian inverse problem. 
  In a wide range of scientific applications, however, the elements of an inverse problem can be
  misspecified or be loosely described.
  Thus, the optimal design needs to account for uncertainty or misspecification
  of the elements of the Bayesian inverse problem.
  Generally speaking, robust OED~\cite{asprey2002designing,zang2005review,telen2012robust,rojas2007robust,telen2014robustifying,pronzato1985robust,pronzato1988robust,pronzato2004minimax} 
  is concerned with optimizing observational configurations against average or 
  worst-case scenarios resulting from an uncertain or misspecified parameter.
  The former, however, requires associating a probabilistic description to the
  uncertain parameter and results in a design that is optimal on average.
  Conversely, the latter seeks a design that is optimal against a lower bound of
  the objective over an admissible set of the uncertain parameter and thus
  does not require a probabilistic description and is optimal against
  the worst-case scenario. 
  This is achieved by the so-called max-min
  design~\cite{pronzato1988robust,biedermann2003note,dette2003standardized}, which has been given
  little attention in model-constrained Bayesian OED. 
  

  This work is concerned with developing a formal approach and algorithm for
  solving robust OED in Bayesian inversion. 
  Specifically, we develop an algorithmic approach for solving sensor placement
  problems in Bayesian inversion by finding a max-min optimal design where the 
  prior and/or observation uncertainties/covariances are misspecified. 
  This robustness is important especially when little or no sufficient information is available a priori about the inversion parameter or when the information about
  sensor accuracy is not perfectly specified. 
  Moreover, this can help ameliorate the effect of misspecification of representativeness
  errors resulting from inaccurate observational operators.
  The methodology described in this work can be easily extended to account for other
  uncertain or misspecified parameters, such as the prior mean, the length of the simulation window, 
  and the temporal observation frequency.
  We focus on A-optimal designs for linear/linearized Bayesian
  OED problems. Extensions to other utility functions, however, can be easily
  developed by following the strategy presented in this work.


  The paper is organized as follows. 
  \Cref{sec:background} provides the mathematical background for Bayesian inversion, OED, and robust design.
  In~\Cref{sec:robust_oed} we present approaches for formulating and solving robust OED problems for Bayesian inversion. 
  Numerical results are given in~\Cref{sec:numerical_results}. 
  Concluding remarks are given in~\Cref{sec:conclusions}.

\section{Background}
\label{sec:background}
  In this section we provide a brief overview of Bayesian inversion 
  and Bayesian OED problems for optimal data acquisition and sensor placement.
  
  \subsection{Bayesian inversion}
    \label{subsec:background_Bayesian_inversion}
    Consider the forward model described by
    \begin{equation}\label{eqn:forward_problem}
      \obs  = \Fcont(\iparam) + \vec{\delta} \,,
    \end{equation}
    %
    where $\iparam$ is the model parameter, 
    $\obs \in \Rnum^{\Nobs}$ is the observation and
    $\vec{\delta} \in \Rnum^{\Nobs}$ is the observation error.
    In most applications, observational errors---modeling the mismatch between model 
    predictions and actual data---are assumed to be Gaussian 
    $\vec{\delta} \sim \GM{\vec{0}}{\Cobsnoise}$,
    where $\Cobsnoise$ is the observation error covariance matrix.
    In this case, the data likelihood is 
    \begin{equation} \label{eqn:Gaussian_likelihood}
      \Like{\obs}{ \iparam } \propto
        \exp{\left( - \frac{1}{2} 
          \sqwnorm{ \Fcont(\iparam) - \obs }{ \Cobsnoise\inv } \right) } \,,
    \end{equation}
    where the matrix-weighted norm is defined as
    $\sqwnorm{\vec{x}}{\mat{A}} = \vec{x}\tran \mat{A} \vec{x} $ for a vector 
    $\vec{x}$ and a matrix $\mat{A}$ of conformable dimensions.
    Note, however, that the approach presented in this work (see Section~\ref{sec:robust_oed}) 
    is not limited to Gaussian observation errors and can be extended to other error models. 
    
    An inverse problem refers to the retrieval of the model parameter $\iparam$ from
    noisy observations $\obs$, conditioned by the model dynamics.
    In Bayesian inversion, a prior encoding any current knowledge about the parameter is specified, and 
    the goal is to study the probability distribution of
    $\iparam$ conditioned by the observation $\obs$, that is, the posterior
    obtained by applying Bayes' theorem.
    By assuming a Gaussian prior~$\iparam \sim \GM{\iparb}{\Cparampriormat}$
    and a linear parameter-to-observable map $\Fcont=\F$, the posterior is 
    also Gaussian $\GM{\ipara}{\Cparampostmat}$ with
    \begin{equation}\label{eqn:Posterior_Params}
      \Cparampostmat = \left(\Fcont \adj \Cobsnoise\inv \Fcont 
        + \Cparampriormat\inv \right)\inv \,, \quad
      \ipara = \Cparampostmat \left( \Cparampriormat\inv \iparb 
        + \Fcont\adj \Cobsnoise\inv \, \obs \right) \,.
    \end{equation}

    This setup, known as the linear Gaussian case, is popular and is used in
    many applications for simplicity, especially in data assimilation algorithms 
    used for solving large-scale inverse problems for numerical weather prediction. 
    Tremendous effort is in progress to account for the high nonlinearity of the
    simulations and observations and to handle non-Gaussian errors, such as
    work on sampling-based methods including Markov chain Monte Carlo~\cite{attia2015hmcfilter,attia2015hmcsampling,attia2015hmcsmoother,attia2017reduced,attia2018ClHMCAtmos} and
    particle filtering~\cite{navon2009data,vetra2018state}.

  \subsection{OED for Bayesian inversion}
    \label{subsec:background_oed}
    A binary OED optimization problem takes the general form
    \begin{equation}\label{eqn:binary_OED_optimization_unregularized}
      \design\opt
      = \argmax_{\design \in \{0, 1\}^{\Nsens}} \,
        \utilityfunc(\design) \,,
    \end{equation}
    where $\design\in \{0,1\}^{\Nsens}$ is a binary design variable 
    ($1$ means an active candidate, and $0$ corresponds to an inactive one) 
    associated with candidate configurations such as sensor locations or types and 
    $\utilityfunc(\design)$ is a predefined utility function
    that quantifies the design quality associated with an inference
    parameter/state of an inverse problem; see~\Cref{subsec:background_utility_functions}.
    In Bayesian inversion, candidates can correspond to proposed sensor locations, 
    observation time points, or other control variables that have influence on the quality of the solution of the
    inverse problem or the predictive power of the assimilation system. 
    The utility function can, for example, be set to a function that summarizes the information 
    gain from the observational data.
    We focus the discussion hereafter on sensor placement as an application for clarity of the presentation. 
    We note, however, that the work presented here can be extended to other binary control variables and utility functions.
    
    The optimization problem~\eqref{eqn:binary_OED_optimization_unregularized} is often appended with a
    sparsity enforcing term $ - \regpenalty\, \penaltyfunc(\design);\,\regpenalty \geq 0 $ to prevent dense
    designs---the function $\penaltyfunc(\design)$ asserts regularization or sparsity on
    the design---and thus reduces the cost associated with deploying observational sensors. 
    The regularized binary OED optimization problem takes the form 
    \begin{equation}\label{eqn:binary_OED_optimization}
      \design\opt
      = \argmax_{\design \in \{0, 1\}^{\Nsens}} \,
        \obj(\design):= \utilityfunc(\design) - \regpenalty \penaltyfunc(\design) \,.
    \end{equation}

    The penalty (soft constraint) could, for example,  encode an observational resource constraint:
    $\sum_{i}^{\Nsens}\design_i \leq k $ or
    $\sum_{i}^{\Nsens}\design_i = k \,; k\in \mathtt{Z}_{+}\,,$
    for example,  an upper bound (or exact budget) on the number of sensors to be deployed.
    This penalty function can also be a sparsifying (possibly nondifferentiable) function, for example,
    $\wnorm{\design}{0}$.
    Note that $\penaltyfunc(\design)$ is an auxiliary term added after formulating the optimization problem 
    and that the sign of the penalty term is negative.
    We define the OED optimization problem~\eqref{eqn:binary_OED_optimization} as a maximization 
    because we use the term ``utility function.''
    Note, however, the maximization can be replaced easily with an equivalent minimization problem where 
    the utility function is replaced with an equivalent optimality criterion and the sign of the penalty 
    term is flipped, that is, set to positive. 
    See~\Cref{subsec:background_utility_functions} for further details.
    \commentout{
      We could also add constraints directly,
      making the subproblem harder to solve, but potentially allowing us to solve more
      interesting problems.
      However, adding constraints such as simplex constraints, or maximum number of
      sensors will require redefining the feasibility set. We will also need to
      project the gradient onto that feasibility set. This might be tricky if the
      feasibility set of the uncertain parameter is non-trivial.
      AHMED: Another approach to accommodate hard constraints on the design is to associate an indicator function
      to the binary design, thus modifying the Bernoulli probabilities.
      This is interesting, and can be a good starting point for extensions of the stochastic learning approach in general.
    }

    Solving~\eqref{eqn:binary_OED_optimization} using traditional binary
    optimization approaches is computationally prohibitive and is typically
    replaced with a relaxation
    \begin{equation}\label{eqn:relaxed_OED_optimization}
      \design\opt
      = \argmax_{\design \in [0, 1]^{\Nsens}} \,
        \obj(\design):= \utilityfunc(\design) - \regpenalty \penaltyfunc(\design) \,,
    \end{equation}
    where the design variable associated with any sensor is relaxed to take
    any value in the interval $[0, 1]$ and then a gradient-based optimization approach is 
    used to solve the relaxed optimization problem~\eqref{eqn:relaxed_OED_optimization} 
    for an estimate of the solution of the 
    original binary optimization problem~\eqref{eqn:binary_OED_optimization}. 
    In many cases, this approach requires applying a rounding methodology, such as sum-up rounding to the solution
    of~\eqref{eqn:relaxed_OED_optimization} to obtain an estimate of the solution of
    the original binary optimization problem~\cite{YuZavalaAnitescu17}.
    %
    Little attention, however, is given to the effect of this relaxation on the OED
    optimization problem and on the resulting experimental design.
    In general, relaxation produces a surface that connects the corners of the binary domain
    (or approximation thereof) on which a gradient-based optimization
    algorithm is applied to solve~\eqref{eqn:relaxed_OED_optimization}.
    Thus, when applying relaxation, one must assure that the relaxed objective
    $\obj$ is continuous over the domain $[0, 1]^{\Nsens}$ with proper limits at
    the bounds/corners of the domain. This issue has been extensively discussed
    in~\cite{attia2022optimal} and will be revisited in~\Cref{subsec:background_utility_functions}.

    Recently, a stochastic learning approach to binary OED was presented
    in~\cite{attia2022stochastic} to efficiently solve~\eqref{eqn:binary_OED_optimization}, 
    where the optimal design is obtained by sampling the optimal parameters:
    \begin{equation}\label{eqn:stochastic_OED_optimization}
      \hyperparam\opt
        = \argmax_{\hyperparam\in[0,1]^{\Nsens}}
          \stochobj(\hyperparam):=
            \Expect{\design\sim\CondProb{\design}{\hyperparam}}{\obj(\design)}\, 
          \,,
    \end{equation}
    where $\CondProb{\design}{\hyperparam}$ is a multivariate Bernoulli
    distribution with parameter $\hyperparam$ specifying probabilities of
    success/activation of each entry of $\design$, that is, $\hyperparam_i\in[0, 1]$.
    The expectation in~\eqref{eqn:stochastic_OED_optimization} is defined 
    as 
    \begin{equation}\label{eqn:stochastic_objective}
      \Expect{\design\sim\CondProb{\design}{\hyperparam}}{\obj(\design)}
        = \sum_{k=1}^{2^{\Nsens}}{\obj(\design[k])\, \Prob{(\design[k])}}\,; \quad
          \design\in\{0,1\}^{\Nsens} \,,
    \end{equation}
    where each possible binary (vector) value of the design $\design$ is identified by a unique 
    index $k\in{1,\ldots,2^{\Nsens}}$.
    A simple indexing scheme is to define a unique index $k$ based on the elementwise values of the design $\design$ as 
    $
      k := 1 + \sum_{i=1}^{\Nsens}{\design_i\, 2 ^{i-1}} \,, \quad \design_i\in\{0,1\} \,.
    $ 
    The expectation~\eqref{eqn:stochastic_objective} is not 
    evaluated explicitly, and its derivative is efficiently 
    approximated following a stochastic gradient approach. 
    As outlined in~\cite{attia2022stochastic}, unlike~\eqref{eqn:relaxed_OED_optimization},
    the stochastic approach~\eqref{eqn:stochastic_OED_optimization} is
    computationally efficient and does not require differentiability of the
    objective $\obj(\design)$ with respect to the design.
    In this work we utilize and discuss both approaches, that is, the standard
    relaxation~\eqref{eqn:relaxed_OED_optimization} and the stochastic
    approach~\eqref{eqn:stochastic_OED_optimization}, for solving the binary OED
    optimization problem~\eqref{eqn:binary_OED_optimization} in the context of
    robust design.
    In~\Cref{subsec:background_utility_functions}, we discuss general approaches for 
    formulating the utility function $\utilityfunc$ in Bayesian OED.

  \subsection{OED utility functions}
    \label{subsec:background_utility_functions}
    The role of the binary design in sensor placement applications is to reconfigure 
    the observational vector or, equivalently, the observation error covariance matrix.
    In Bayesian OED (see, e.g., \cite{alexanderian2021optimal,alexanderian2016bayesian,AlexanderianPetraStadlerEtAl14,attia2018goal,HaberHoreshTenorio08,HaberHoreshTenorio10}), 
    by introducing the design $\design$ to the inverse problem, the observation covariance $\Cobsnoise$ is replaced 
    with a weighted version $\wdesignmat(\design)$, resulting in the weighted data-likelihood
    \begin{equation}\label{eqn:weighted_joint_likelihood}
      \Like{\obs}{ \param; \design}
        \propto \exp{\left( - \frac{1}{2} 
        \sqwnorm{ \Fcont(\param) - \obs}{\wdesignmat(\design)} \right) } \,,
    \end{equation}
    where $\wdesignmat(\design)$ is a weighted version of the precision matrix
    $\Cobsnoiseinv$ obtained by removing rows (and columns) from $\Cobsnoise$ corresponding to
    inactive sensors.
    For a binary design $\design\in\{0,1\}^{\Nsens}$, that is, for
    both~\eqref{eqn:binary_OED_optimization}
    and~\eqref{eqn:stochastic_OED_optimization},
    this can be achieved by defining the weighted precision matrix as
    \begin{equation}\label{eqn:PrePost_weighted_Precision}
      \wdesignmat(\design) 
        := \pseudoinverse{ \diag{\design} \Cobsnoise \diag{\design} } 
        \equiv
          \proj\tran \left( 
              \proj \Bigl(\diag{\design} \Cobsnoise \diag{\design} \Bigr) \proj\tran
            \right)\inv \proj
          \,, 
    \end{equation}
    where $\dagger$ is the Moore--Penrose pseudoinverse and
    $\proj:=\proj(\design) \in \Rnum^{n_a \times \Nsens}$ is a sparse matrix that extracts 
    the $n_a:=\sum \design$ nonzero rows/columns from the design matrix $\wdesignmat$, corresponding to 
    the nonzero entries of the design $\design$.
    Specifically, $\proj$ is a binary matrix with only one entry equal to $1$ on
    each row $i$, at the column corresponding to the $i$th nonzero entry in $\design$.
    In the case of relaxation, however, 
    the weighted likelihood is formulated by replacing the precision matrix
    $\Cobsnoise$ with the more general form 
    \begin{subequations}\label{eqn:pointwise_weighted_precision}
    \begin{equation}\label{eqn:Shur_weighted_Precision}
      \wdesignmat(\design) 
        := \pseudoinverse{ \designmat(\design) \odot \Cobsnoise } 
        \equiv
          \proj\tran \left( 
              \proj \Bigl( \designmat(\design) \odot \Cobsnoise \Bigr) \proj\tran
            \right)\inv \proj
          \,, 
    \end{equation}
    where $\odot$ is the Hadamard (Schur) product and $\designmat(\design)$ is
    a weighting matrix with entries defined by
    \begin{equation}\label{eqn:design_weights}
      \varweightfunc\left(\design_i, \design_j \right)
        = 
        \begin{cases}
          \weightfunc_i \weightfunc_j &;  i \neq j \\
          \begin{cases}
            0     &;  \weightfunc_i = 0 \\
            \frac{1}{\weightfunc_i ^ 2}  &; \weightfunc_i \neq 0  
          \end{cases} &; i=j \\ 
        \end{cases} ; i,j = 1,2,\ldots,\Nsens \,,  
    \end{equation}
    \end{subequations}
    where $\weightfunc_i=\weightfunc(\design_i)\in[0, 1]$ is a weight 
    associated with the $i$th candidate sensor and is calculated based on the value of 
    the relaxed design $\design_i \in [0, 1]$.
    The simplest form for this function is $\weightfunc_i(\design_i):=\design_i$; 
    see~\cite{attia2022optimal} for further details
    and for other forms of the weighting functions $\varweightfunc$.
    This formulation~\eqref{eqn:pointwise_weighted_precision} of the weighted precision matrix
    preserves the continuity of the OED objective at the bounds of the domain and
    thus produces a relaxation surface that connects the values of the
    OED objective function evaluated at the corners of the domain, that is, at the binary
    designs.
    Note that the general form of the weighted precision
    matrix~\eqref{eqn:pointwise_weighted_precision} reduces 
    to~\eqref{eqn:PrePost_weighted_Precision} for a
    binary design $\design\in\{0, 1\}^\Nsens$.
    The robust OED approach proposed in this work (see~\Cref{sec:robust_oed}) follows the stochastic learning 
    approach in~\cite{attia2022stochastic} that utilizes only binary design values and avoids 
    relaxing the design, and thus the two forms~\eqref{eqn:PrePost_weighted_Precision} 
    and~\eqref{eqn:PrePost_weighted_Precision} can be equivalently utilized here. 
    
    If the problem is linear, the posterior is Gaussian~\eqref{eqn:Posterior_Params}
    with posterior covariance $\Cparampostmat$ independent from the
    prior mean and the actual data instances. Thus, an optimal design can be
    defined as the one that optimizes a scalar summary of the posterior
    uncertainty before actual deployment of the observational sensors.
    Generally speaking, in linear Gaussian settings, most OED utility
    functions, such as those utilizing the Fisher information matrix (FIM) or the
    posterior covariance matrix, are independent from the prior mean and the
    actual data instances and depend only on the simulation model and the 
    uncertainties prescribed by the prior and the observational noise. 
    Popular utility functions include 
    $\utilityfunc(\design):=\Trace{\FIM(\design)}$ for \emph{A-optimal} design and 
    $\utilityfunc(\design):=\logdet{\FIM(\design)}$ for \emph{D-optimal} design.
    In the linear Gaussian case, the information matrix $\FIM$ is equal to
    the inverse of the posterior covariance matrix~\eqref{eqn:Posterior_Params}, 
    that is,
    $
    \FIM(\design) \equiv
        \Cparampostmat\inv(\design)
          = \F\adj \wdesignmat(\design) \F
            + \Cparampriormat\inv \,,
    $
    where $\F$ and $\F\adj$ are the linear forward (parameter-to-observable) operator 
    and the associated adjoint, respectively.

    When the model $\Fcont$ is nonlinear, however, the FIM requires evaluating the
    tangent-linear model at the true parameter, that is,
    $\F=\partial\Fcont|_{\iparam=\ipartrue}$. Thus, to obtain an optimal design,
    one can iterate over finding the maximum a posteriori (MAP) estimate of $\iparam$
    and solving an OED problem using a Gaussian approximation around that estimate.
    This approach coincides with iteratively solving the OED problem by using a
    Laplacian approximation, where the posterior is approximated by using a
    Gaussian centered around the MAP estimate~\cite{attia2018goal}.
    This approach, however, requires solving the inverse problem for a specific choice
    of the prior mean (assuming Gaussian prior) and a given data generation
    process.
    Note that when the FIM is used, the optimal design is defined as a maximizer of
    the information content, while in the case of posterior uncertainties, the
    optimal design is defined by solving a minimization problem. 
    As mentioned in~\Cref{subsec:background_oed}, in both cases 
    one has to define the sign of the regularization accordingly.
    In this work, for clarity we restrict the discussion to the former case; that is, the
    optimal design is defined as in~\eqref{eqn:binary_OED_optimization}.
    Another approach for handling nonlinearity and/or non-Gaussianity is to use the 
    KL divergence between the prior and the posterior as a utility function~\cite{HuanMarzouk13}. 
    
    As mentioned in~\Cref{sec:introduction}, in this work we focus on
    A-optimal designs for linear/linearized Bayesian OED problems; 
    that is, we define the utility function as the trace of the $\FIM$:
    \begin{equation}\label{eqn:A_opt_utilityfunction}
      \utilityfunc(\design) := \Trace{\FIM(\design)}
      := 
        \Trace{
          \F\adj \pseudoinverse{ \designmat(\design) \odot \Cobsnoise } \F + \Cparampriormat\inv 
      }  \,,
    \end{equation}
    where the weighted precision matrix $\wdesignmat(\design)$ is defined by~\eqref{eqn:PrePost_weighted_Precision}.

    \commentout{Once the discussion in~\Cref{sec:robust_oed} is finalized; maybe say explicitly here 
      what challenges are associated with 
      other criteria such as D-optimality and how to possible handle them. For example,
      randomized estimates, form of the gradients, etc.
      }

  \subsection{Need for robustness: inverse problem misspecification}
  \label{subsec:IP_misspecification}
    In Bayesian inversion and Bayesian OED, the optimal design $\design\opt$ relies heavily on
    the degree of accuracy of the prior and the observation error model. 
    Specifically, a utility function that utilizes the FIM or the posterior covariance is
    greatly influenced by the accuracy of the prior covariance $\Cparamprior$
    and the observation error covariances $\Cobsnoise$.
    Prior selection remains a challenge in a vast range of applications.
    Additionally, the observation error covariance $\Cobsnoise$ is specified
    based on both instrumental errors and representativeness errors,
    that is, the inaccuracy of the observation operator that maps the model state onto
    the observation space. 
    Thus, in real-world applications, both the prior and the observation error uncertainties 
    can be easily misspecified or be known only to some degree a priori.
    Such misspecification or uncertainty can alter the utility value
    corresponding to each candidate design.
    One can specify a nominal value of the uncertain parameter and use it to solve the OED problem.
    The resulting design, however, is optimal only  for that specific choice and can lead to poor 
    results for cases other than the nominal value.
    Thus, such misspecification must be properly considered while solving an OED problem.

    In~\Cref{sec:robust_oed} we formulate the OED problem such that the resulting design
    is robust with respect to misspecification of the prior covariances
    $\Cparampriormat$ or the observation error covariances $\Cobsnoise$.
    We also provide algorithmic procedures to solve the formulated robust OED
    problems.

\section{Robust Bayesian OED}
\label{sec:robust_oed}
  In order to properly consider misspecification of inverse problem elements or parameters, 
  an admissible set must be properly characterized for the
  misspecified/uncertain parameter.
  In this work we focus on the prior covariances $\Cparamprior$ and the
  observation error covariances to be the uncertain parameters, and we study each
  case independently.
  To this end, we assume the prior covariance is parameterized as
  $\Cparamprior:=\Cparamprior(\uncertainparam^{\rm pr})$, where 
  $\uncertainparam^{\rm pr}\in\uncertainparamset^{\rm pr}$. 
  Similarly, we assume the observation error covariance is parameterized as
  $\Cobsnoise:=\Cobsnoise(\uncertainparam^{\rm noise})$, where 
  $\uncertainparam^{\rm noise}\in\uncertainparamset^{\rm noise}$. 
  Thus, the utility function takes the form
  $\utilityfunc(\design,\, \uncertainparam^{\rm pr},\,\uncertainparam^{\rm noise})$; 
  and because $\obj$ is a regularized version of the utility function $\utilityfunc$, 
  the objective function also takes the form $\obj(\design,\, \uncertainparam^{\rm pr},\,\uncertainparam^{\rm noise})$.
  Note that the parameterized form must assure that the resulting covariance
  operator/matrix is symmetric positive definite.
  
  To keep the discussion more general, we denote the uncertain parameter by
  $\uncertainparam\in\uncertainparamset$, where $\uncertainparamset$ is the admissible set of $\uncertainparam$. 
  Here $\uncertainparam$ could be the parameter of the prior
  covariance $\uncertainparam=\uncertainparam^{\rm pr}$ or the data
  uncertainty parameter $\uncertainparam=\uncertainparam^{\rm noise}$ or both
  $\left( {\uncertainparam^{\rm pr}}\tran,\, {\uncertainparam^{\rm noise} }\tran \right)\tran$.
  Thus the utility function and the regularized objective take the general 
  forms $\utilityfunc(\uncertainparam)$ and $\obj(\uncertainparam)$, respectively. 

  We start with a discussion that provides insight into the effect of robustifying the binary optimization 
  problem~\eqref{eqn:binary_OED_optimization}
  by optimizing against the worst-case scenario (max-min) in~\Cref{subsec:robust_binary_OED_insight}, followed by 
  the proposed approach starting from~\Cref{subsec:robust_oed_algorithm_stochastic}.

  \subsection{Insight on robustifying the binary optimization problem}
  \label{subsec:robust_binary_OED_insight}
    As mentioned in~\Cref{sec:introduction}, we follow a conservative 
    approach (Wald's maxmin model~\cite{wald1945statistical,wald1950statistical}) 
    to formulate and solve the robust OED problem.
    Specifically, we are interested in formulating and solving a robust counterpart
    of~\eqref{eqn:binary_OED_optimization} to find an optimal design against
    the worst-case (WC) scenario.
    Thus, we define an optimal design that is robust with respect to
    misspecification of $\uncertainparam$ as
    \begin{equation}\label{eqn:binary_robust_OED_optimization}
    \design\WCopt
        = \argmax_{\design\in\{0,1\}^\Nsens}\,  
            \min_{\uncertainparam\in\uncertainparamset} 
              \obj(\design,\,\uncertainparam)
          \,.
    \end{equation}

    This approach seeks the optimal design $\design$ that maximizes the objective $\obj$
    against the worst-case-scenario as defined by the uncertain parameter, that is, the lower envelop defined by $\uncertainparam$.
    \Cref{fig:Binary_1D_Diagram} provides a simple diagrammatic explanation of robust binary optimization 
    for a one-dimensional binary design $\design\in \{0, 1\}$.
    \begin{figure}[htbp!]
    \center
      \includegraphics[width=0.50\textwidth]{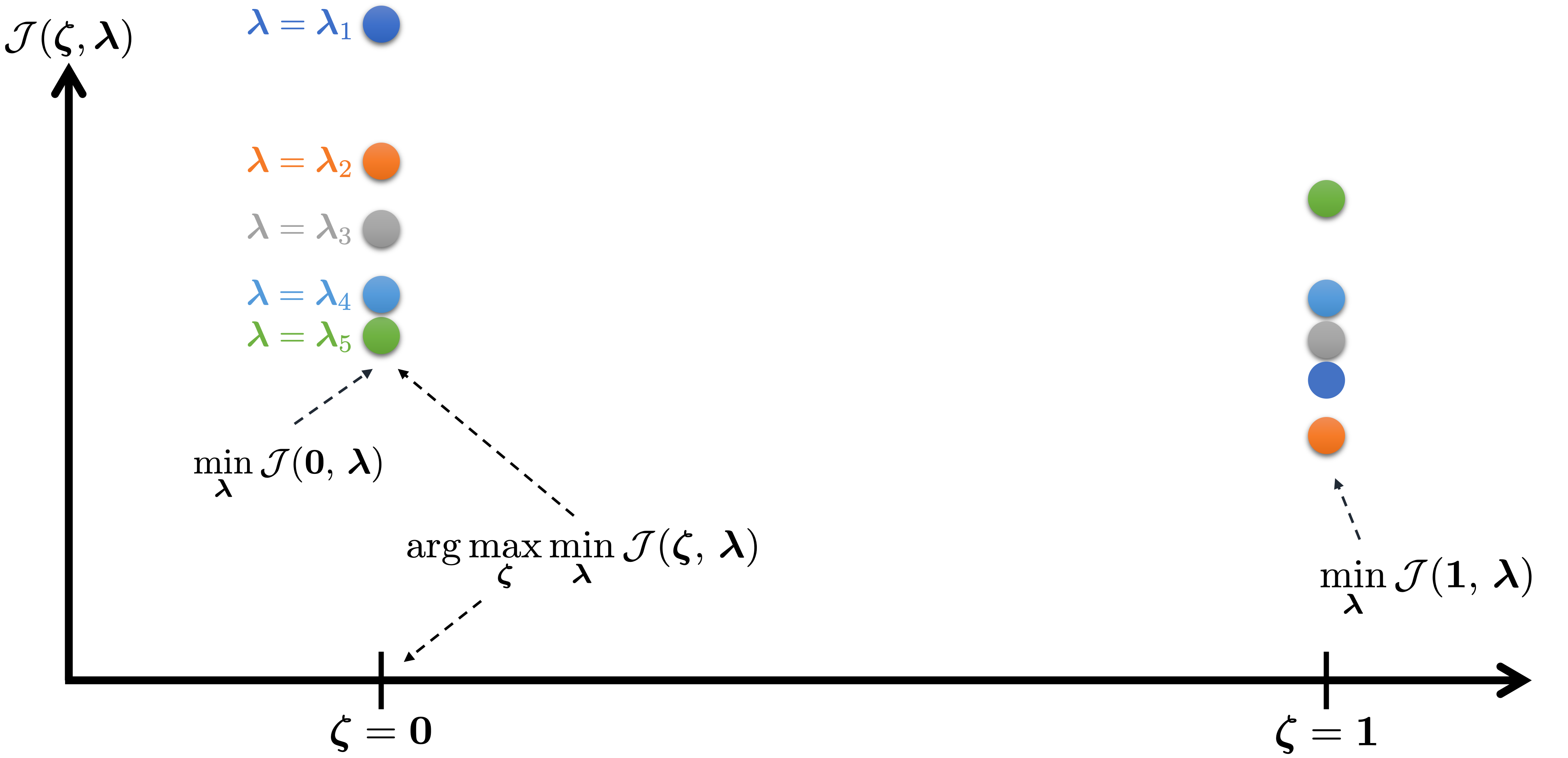}
      \caption{
        Robust binary optimization diagram.
        The design variable has two possible values, $\design\in\{0, 1\}$, and the uncertain 
        parameter is assumed to be finite for simplicity with $\uncertainparam\in\{\uncertainparam_i\,|\, i=1, \ldots, 5\}$. 
        The robust optimal design here is $\design\opt=0$ since it is the one that assumes the maximum of the lower bound 
        $\{\min\limits_{\uncertainparam}\obj(0, \uncertainparam),\, \min\limits_{\uncertainparam}\obj(1, \uncertainparam) \}$.
      }
      \label{fig:Binary_1D_Diagram}
    \end{figure}

    Note that choosing a nominal value for the uncertain parameter could lead to an incorrect solution. 
    For example, if one chooses to solve the binary optimization problem only for the nominal 
    value $\uncertainparam=\uncertainparam_5$, 
    the optimal solution, that is, the design that maximizes the objective, is $\design=1$, which is not the same as the 
    robust optimal design $\design\opt=0$.
    Note also that the lower bound created by the uncertain parameter
    $\{\min\limits_{\uncertainparam}\obj(0, \uncertainparam),\, \min\limits_{\uncertainparam}\obj(1, \uncertainparam) \}$
    is not formed by a single value of the uncertain parameter. Thus, restricting the solution of the OED optimization problem 
    to a specific nominal value is generally not expected to result in a robust optimal design. 
    
    We note, that for A-optimal robust designs, we can remove the dependence on the uncertain prior, $\uncertainparam^{\rm pr}$, because it does not affect the optimal design. This observation follows from the following equivalence:
    \begin{eqnarray}
    & & \displaystyle \max_{\design\in\{0,1\}^\Nsens}\,  
    \min_{\uncertainparam\in\uncertainparamset}
     \Trace{\FIM(\design)}
          := 
            \Trace{
              \F\adj \pseudoinverse{ \designmat(\design) \odot \Cobsnoise(\uncertainparam^{\rm noise} } \F + \Cparampriormat(\uncertainparam^{\rm pr})\inv 
          }  \nonumber \\
    \Leftrightarrow & & 
      \displaystyle \max_{\design\in\{0,1\}^\Nsens}\,  
    \min_{\uncertainparam\in\uncertainparamset}
          \left\{  \Trace{
              \F\adj \pseudoinverse{ \designmat(\design) \odot \Cobsnoise(\uncertainparam^{\rm noise} } \F} + 
              \Trace{\Cparampriormat(\uncertainparam^{\rm pr})\inv 
          } \right\}  \nonumber \\
          \Leftrightarrow & & 
      \displaystyle \max_{\design\in\{0,1\}^\Nsens}\,  
    \min_{\uncertainparam\in\uncertainparamset}
            \Trace{
              \F\adj \pseudoinverse{ \designmat(\design) \odot \Cobsnoise(\uncertainparam^{\rm noise} } \F} + 
    \min_{\uncertainparam\in\uncertainparamset}
              \Trace{\Cparampriormat(\uncertainparam^{\rm pr})\inv 
          } \; , \nonumber 
    \end{eqnarray}
    which follows from the linearity of the trace operator, and the fact that second trace is independent of $\design$. Therefore, we do not include uncertainty in $\Cparampriormat$ in our analysis. We do note, however, that for other robust designs, such as D-optimal robust design, uncertainty in the prior needs to be taken into account.

    \subsubsection{General algorithmic solution approach}
    \label{subsubsec:robust_OED_algorithm}
      %
      \Cref{alg:Polyak_MaxMin} is a general sampling-based approach due to~\cite{levitin1966constrained}
      for solving the max-min optimization problem~\eqref{eqn:binary_robust_OED_optimization} and other relaxations.
      \begin{algorithm}[htbp!]
        \setstretch{1.0}
        \caption{Algorithm for
          solving the max-min optimization problem~\eqref{eqn:binary_robust_OED_optimization}}
        \label{alg:Polyak_MaxMin}
        \begin{algorithmic}[1]
          \Require{
            Initial guess $\design^{(0)}$;
            and an initial sample  
              $\uncertainparamsample^{(k)}:=\{\uncertainparam^{(i)}\in\uncertainparamset|
              i=1, \ldots, k\!-\!1\} $ from the uncertain parameter admissible set.
          }

          \Ensure{$\design\WCopt$}

          \State{initialize $l = 0$}

          \While{Not Converged}

            \State\label{algstep:Polyak_MaxMin_design_opt}{$
                \design^{(l+1)} \!\leftarrow\! \argmax\limits_{\design} \,
                \min\limits_{\uncertainparam^{\prime}\in\uncertainparamsample^{(k+l-1)}} \obj(\design,\uncertainparam) 
              $ 
            } 

            \State\label{algstep:Polyak_MaxMin_param_opt}{$
            \uncertainparam^{(k+l)} \leftarrow \argmin\limits_{\uncertainparam \in \uncertainparamset}
                \obj(\design^{(l+1)},\uncertainparam)
              $ 
            } 

            \State{$
              \uncertainparamsample^{(k+l)} \leftarrow \uncertainparamsample^{(k+l-1)} \cup \{\uncertainparam^{(k+l)}\}
              $
            }
          
            \State{$l \leftarrow l+1$}

          \EndWhile

            \State{$\design\WCopt \leftarrow \design^{(l+1)}$}
  
            \State{\Return{$\design\WCopt$}}

        \end{algorithmic}

      \end{algorithm}

    \Cref{alg:Polyak_MaxMin} starts with an initial finite sample of the uncertain parameter 
    $\uncertainparamsample\subset\uncertainparamset$ and works by alternating the solution of two optimization problems.
    The first (Step~\ref{algstep:Polyak_MaxMin_design_opt}) is a maximization problem to update the target 
    variable (here the design) $\design$.
    The second (Step~\ref{algstep:Polyak_MaxMin_param_opt}) is a minimization problem to update the 
    uncertain parameter, which is then used to expand the sample $\uncertainparamsample$ until convergence.
    The max-min conservative approach for robust OED 
    can be traced back to the early
    works~\cite{walter1987robust,pronzato1988robust,pronzato2004minimax}; however,
    it was noted (see, e.g.,~\cite{walter1987robust,asprey2002designing}) that this approach has not been
    widely used, despite its importance, because of the computational overhead posed by the max-min
    optimization.  
    Solving~\eqref{eqn:binary_robust_OED_optimization}, for example, following the steps of~\Cref{alg:Polyak_MaxMin}, 
    would require enumerating all possible binary designs
    and, based on the nature of $\uncertainparam$, would also require probing the space of the uncertain parameter $\uncertainparam$.

    The two optimization steps (Steps \ref{algstep:Polyak_MaxMin_design_opt} and  \ref{algstep:Polyak_MaxMin_param_opt}) can be 
    carried out by following a gradient-based approach. Doing so, however,  would require applying relaxation of the binary design, 
    thus allowing all values of $\design\in[0, 1]$. 
    Moreover, Step~\ref{algstep:Polyak_MaxMin_design_opt} involves the derivative of the objective $\func(\design, \uncertainparam)$, 
    with respect to the target variable $\design$ over a finite sample $\uncertainparamsample$ of the uncertain parameter, which will 
    require introducing the notion of a generalized gradient~\cite{Rockafellar70}. However, we show below that by optimizing over the corresponding Bernoulli parameters, we can avoid the complication of generalised gradients.
    %

    %
    In our work we present and discuss an efficient approach to overcome such
    hurdles and yield robust optimal designs for sensor placement for Bayesian
    inverse problems. 
    In~\Cref{subsubsec:naive_robust_OED_formulations} we share our insight on naive attempts 
    to solve~\eqref{eqn:binary_robust_OED_optimization},
    followed by our proposed approach in~\Cref{subsec:robust_oed_algorithm_stochastic}.

    \subsubsection{Naive relaxation and stochastic formulations}
    \label{subsubsec:naive_robust_OED_formulations}
      %
      As discussed in~\Cref{subsec:background_oed}, to
      solve~\eqref{eqn:binary_robust_OED_optimization}, we can replace the outer optimization 
      problem with either a relaxation~\eqref{eqn:relaxed_OED_optimization} or the stochastic
      formulation~\eqref{eqn:stochastic_OED_optimization}. 
      This corresponds to replacing the optimization
      problem~\eqref{eqn:binary_robust_OED_optimization} with one of the following
      formulations:  
      \begin{subequations}\label{eqn:naive_robust_forms}
      \begin{align}
        \design\WCopt
            &= \argmax_{\design\in[0,1]^\Nsens}\, 
                \min_{\uncertainparam\in\uncertainparamset} 
                  \obj(\design,\,\uncertainparam)
              \label{eqn:robust_relaxed_OED_optimization}
                \\
        \hyperparam\WCopt_{\uncertainparam}
            &= \argmax_{\hyperparam\in[0,1]^\Nsens} \,
              \min_{\uncertainparam\in\uncertainparamset}  
              \stochobj(\hyperparam,\,\uncertainparam)
              :=\Expect{\design\sim\CondProb{\design}{\hyperparam}}{
                \obj(\design,\,\uncertainparam)
              }
              \label{eqn:robust_stochastic_OED_optimization_I}
            \,.
      \end{align}
      \end{subequations}

      In~\eqref{eqn:robust_relaxed_OED_optimization} the outer binary optimization
      problem is replaced with a relaxation over the relaxed design space, while
      in~\eqref{eqn:robust_stochastic_OED_optimization_I} the outer problem is
      replaced with a stochastic formulation with the objective of finding an
      optimal observational policy (activation probabilities) that is robust 
      with respect to the misspecified parameter $\uncertainparam$. 
      As discussed in~\cite{attia2022stochastic}, the relaxed
      form~\eqref{eqn:robust_relaxed_OED_optimization} requires the penalty term to
      be differentiable, and the solution procedure requires specifying the derivative of
      the utility function $\utilityfunc$ with respect to the design $\design$,
      while~\eqref{eqn:robust_stochastic_OED_optimization_I} does not require
      differentiability of the penalty term and does not require the derivative of the
      utility function with respect to the design.

      While these attempts to formulate the robust binary OED optimization problem are rather intuitive,  
      the two formulations~\eqref{eqn:robust_relaxed_OED_optimization} and
      \eqref{eqn:robust_stochastic_OED_optimization_I}  are neither equivalent to each other 
      nor equivalent to the original robust binary optimization 
      problem~\eqref{eqn:binary_robust_OED_optimization}.
      This is explained by~\Cref{fig:Naive_Relaxations_1D_Diagram}, which extends the example shown in~\Cref{fig:Binary_1D_Diagram} 
      by applying the formulations in~\eqref{eqn:robust_relaxed_OED_optimization} (left) 
      and~\eqref{eqn:robust_stochastic_OED_optimization_I} (right), respectively.
      \begin{figure}[htbp!]
      \center
        \includegraphics[width=0.49\textwidth]{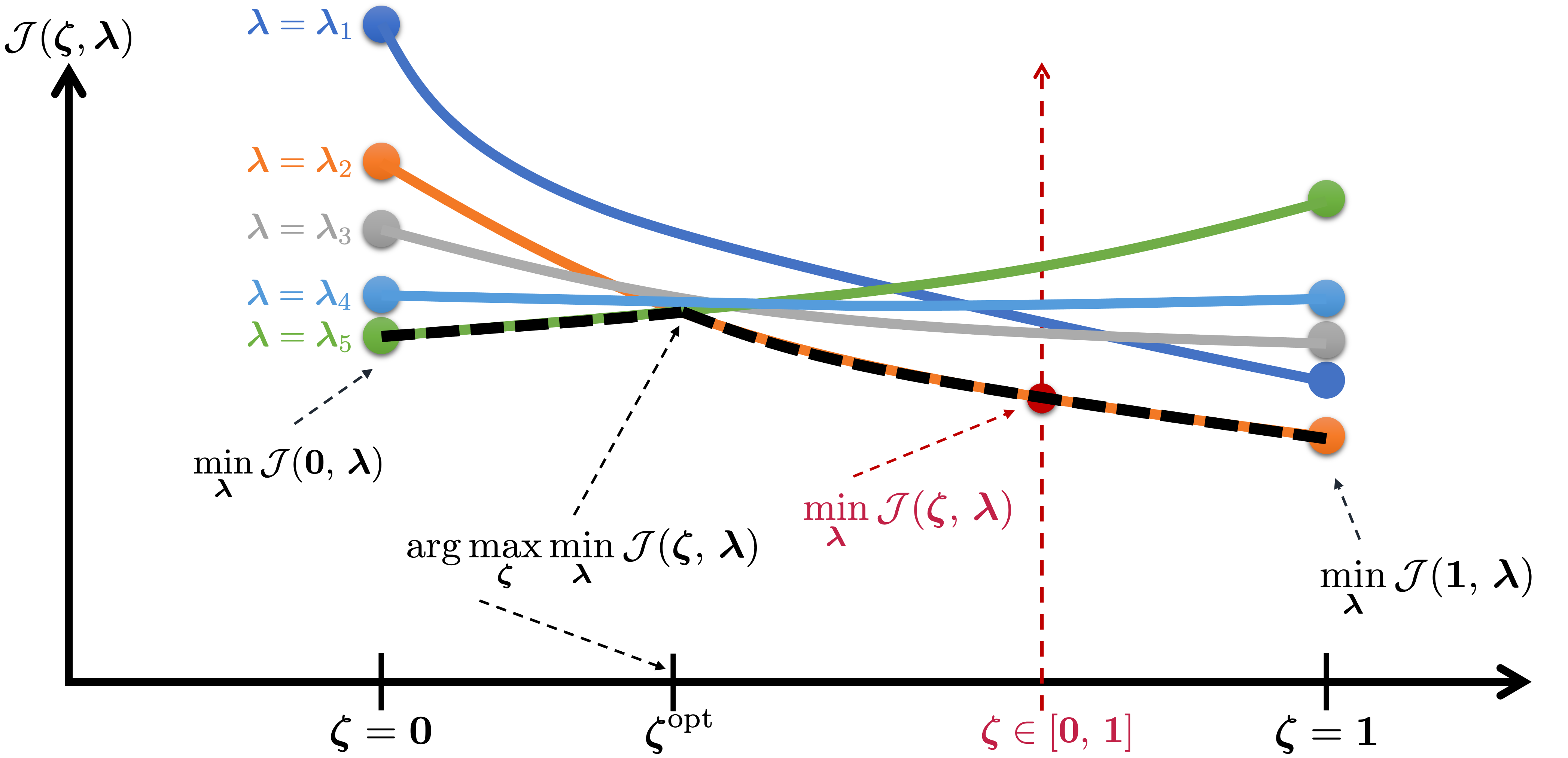}
          \hfill
        \includegraphics[width=0.49\textwidth]{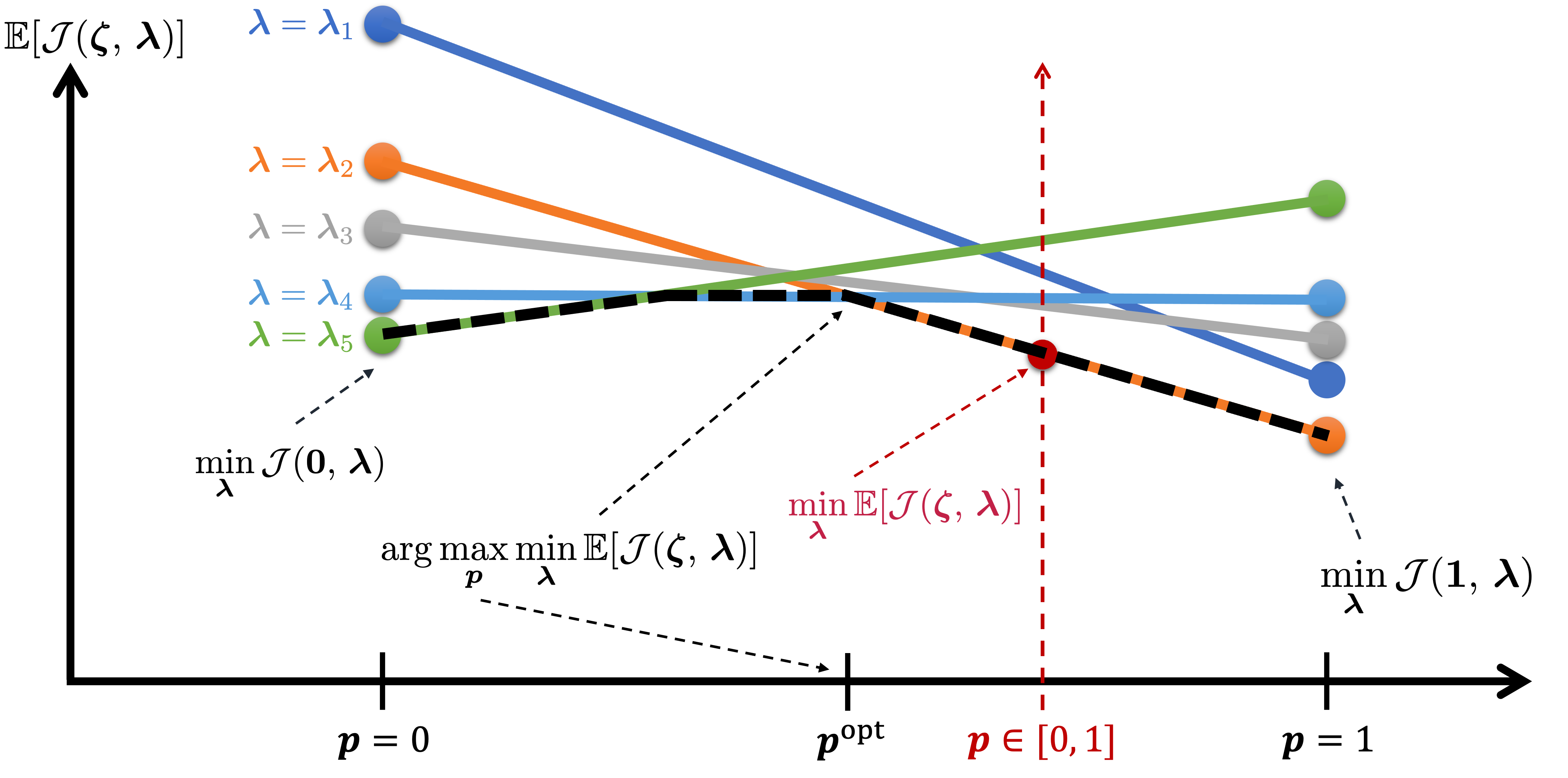}
          \caption{ Similar to~\Cref{fig:Binary_1D_Diagram}.
          Left: the design is relaxed to the domain $[0, 1]$ using~\eqref{eqn:robust_relaxed_OED_optimization}.
          Right: the stochastic formulation~\eqref{eqn:robust_stochastic_OED_optimization_I} is used.
          The optimal solutions of the two formulations~\eqref{eqn:naive_robust_forms} 
          are not equal, and both are different from the solution of 
          the original robust binary optimization problem~\eqref{eqn:binary_robust_OED_optimization}.
        }
        \label{fig:Naive_Relaxations_1D_Diagram}
      \end{figure}

      Note that although the lower-bound envelope constructed by relaxation is continuous everywhere,  
      differentiability with respect to the design $\design$ (or the Bernoulli parameter $\hyperparam$ in the stochastic formulation) 
       is not preserved in this example and thus is generally not guaranteed for the formulations~\eqref{eqn:naive_robust_forms}.
      This means that gradient-based optimization routines cannot be utilized to solve the outer optimization (max) problem.
      %
      Moreover, the optimal solution of both formulations~\eqref{eqn:naive_robust_forms} in this case are nonbinary, 
      which mandates applying rounding methodologies to obtain a binary estimate of the solution of the original 
      robust binary OED optimization problem~\eqref{eqn:binary_robust_OED_optimization}.
      In~\Cref{subsec:robust_oed_algorithm_stochastic} we present a formulation that overcomes such hurdles.

  \subsection{Robust binary optimization: a stochastic learning approach}
    \label{subsec:robust_oed_algorithm_stochastic}
    We formulate the stochastic robust binary optimization problem as follows:
    \begin{equation}\label{eqn:robust_stochastic_binary_optimization}
      \hyperparam\WCopt_{\uncertainparam}
        = \argmax_{\hyperparam\in[0,1]^{\Nsens}} \,
            \Expect{\design\sim\CondProb{\design}{\hyperparam}}{
              \min_{\uncertainparam\in\uncertainparamset}  
                \obj(\design, \uncertainparam)
        } \,.
    \end{equation}

    The form~\eqref{eqn:robust_stochastic_binary_optimization} constructs an expectation surface (smooth) that connects 
    the minimum values of the objective (over $\uncertainparam$) at all possible binary design variables $\design\in\{0, 1\}^{\Nsens}$.
    This fact is explained by using the example utilized in~\Cref{subsec:robust_binary_OED_insight} and \Cref{subsubsec:naive_robust_OED_formulations} 
    and is shown schematically in~\Cref{fig:Stochastic_Robust_1D_Diagram}.
    \begin{figure}[htbp!]
    \center
      \includegraphics[width=0.50\textwidth]{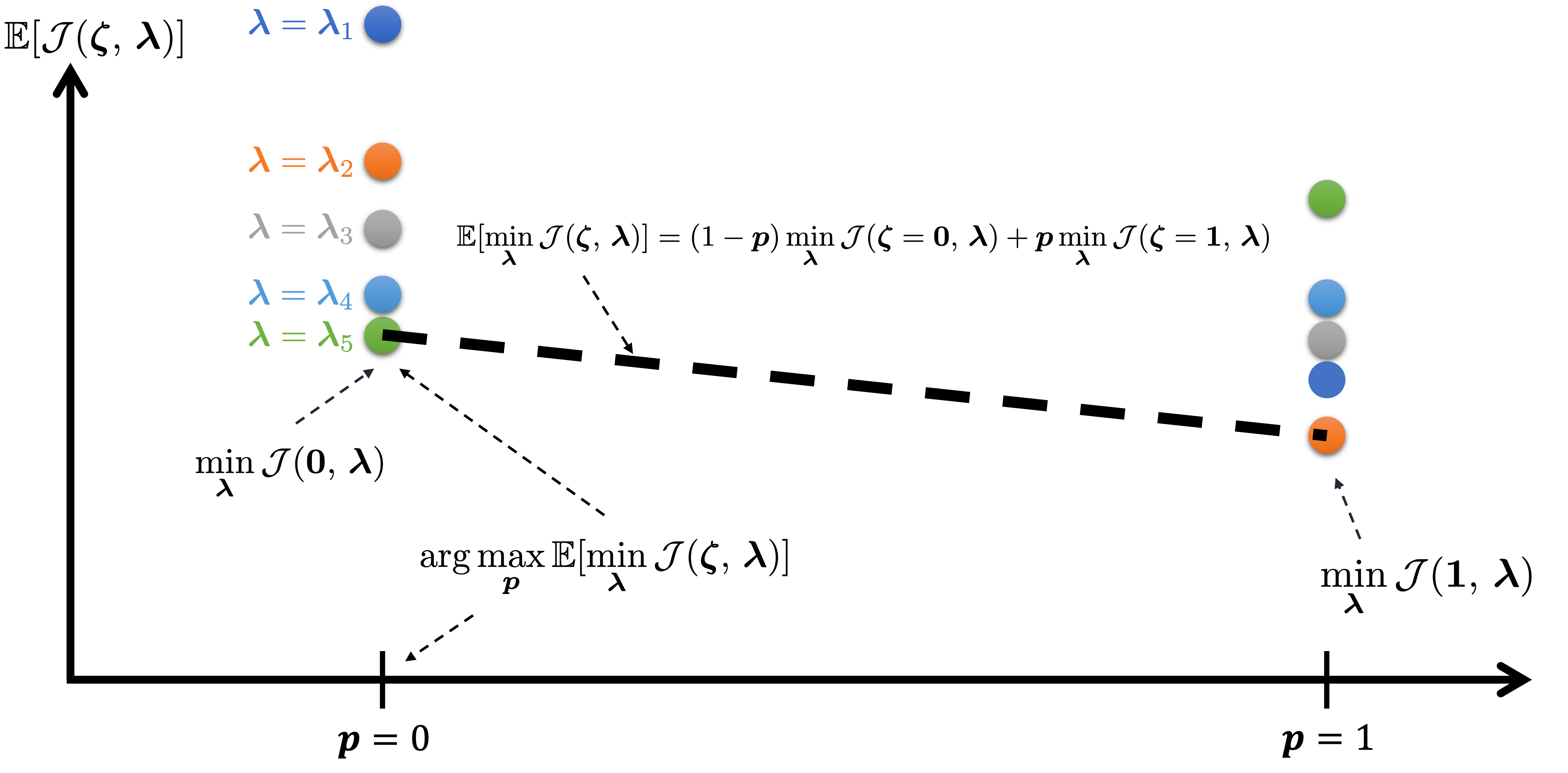}
      \caption{ Similar to~\Cref{fig:Naive_Relaxations_1D_Diagram}.
          Here, the stochastic formulation~\eqref{eqn:robust_stochastic_binary_optimization} is used.
      }
      \label{fig:Stochastic_Robust_1D_Diagram}
    \end{figure}

    This formulation can be applied to general robust binary optimization problems.
    Here, we discuss the formulation and the algorithmic approach in the context of binary OED problems for sensor placement as stated in the introduction.
    Specifically, we formulate the stochastic optimization problem for 
    robust A-OED~\eqref{eqn:robust_stochastic_A_optimal_OED_optimization} as follows:
    \begin{equation}\label{eqn:robust_stochastic_A_optimal_OED_optimization}
      \begin{aligned}
      \hyperparam\WCopt_{\uncertainparam}
        &= \argmax_{\hyperparam\in[0,1]^\Nsens} \,
            \Expect{\design\sim\CondProb{\design}{\hyperparam}}{
              \min_{\uncertainparam\in\uncertainparamset}  
                \obj(\design, \uncertainparam)
          }  \,, \\
      \obj(\design, \uncertainparam) 
        & = \Trace{\FIM(\design,\,\uncertainparam) }
                    - \regpenalty \, \penaltyfunction{\design}
            \,, \\
      \FIM(\design,\uncertainparam) 
          &= \Fadj \pseudoinverse{ \diag{\design} \Cobsnoise (\uncertainparam) \diag{\design} } \F 
              + \Cparampriormat\inv  \,, \\
      \end{aligned}
    \end{equation}
    where all design variables are binary, that is, $\design_i\in\{0, 1\};,\, i=1, \ldots, \Nsens$.
    The numerical solution of the optimization problem~\eqref{eqn:robust_stochastic_A_optimal_OED_optimization} 
    does not require the objective $\obj$ to be differentiable with respect to the design $\design$ in order to 
    solve the outer optimization problem. 
    By definition, the objective is continuous and is differentiable with respect to the Bernoulli parameter $\hyperparam$.
    In order to solve the inner optimization problem, however, the objective $\obj$ must be differentiable with respect 
    to the uncertain parameter $\uncertainparam$.
    Note that the penalty function $\penaltyfunc$ is independent from the uncertain parameter and is thus 
    not required to be differentiable.
    
    To solve~\eqref{eqn:robust_stochastic_A_optimal_OED_optimization}, we 
    extend~\Cref{alg:Polyak_MaxMin} by 
    defining the objective function of the outer optimization problem as the penalized robust A-optimal OED 
    stochastic objective 
    $$
    \stochobj(\hyperparam, \uncertainparam):=\Expect{\design\sim\CondProb{\design}{\hyperparam}}{
        \min_{\uncertainparam} \obj(\design,\uncertainparam)
      }
    $$
    stated in~\eqref{eqn:robust_stochastic_A_optimal_OED_optimization}.
    Thus, we require developing the derivative of the stochastic objective 
    $ 
    \stochobj(\hyperparam, \uncertainparam)
    $ 
    with respect to 
    the Bernoulli distribution hyperparameter $\hyperparam$.
    %
    The derivative of the expectation with respect to the hyperparameter $\hyperparam$
    of the Bernoulli distribution is approximated by using the Kernel trick as
    explained in~\cite{attia2022stochastic}.
    Specifically, from~\cite[Equation 3.9]{attia2022stochastic}, it follows
    that
    \begin{equation}\label{eqn:stochastic_gradient_hyperparam}
      \begin{split}
        \nabla_{\hyperparam}
        \stochobj(\hyperparam,\, \uncertainparam)
        = \nabla_{\hyperparam}
        \Expect{\design\sim\CondProb{\design}{\hyperparam}} {
              \min_{\uncertainparam} \obj(\design,\uncertainparam)
        }
        &= \Expect{\design\sim\CondProb{\design}{\hyperparam}} {
          \min_{\uncertainparam} \obj(\design,\uncertainparam) 
          \nabla_{\hyperparam} \log \CondProb{\design}{\hyperparam}
        }
        \\
        &\approx
        \frac{1}{\Nens} \sum_{k=1}^{\Nens}
              \min_{\uncertainparam} \obj(\design[k],\uncertainparam)
          \nabla_{\hyperparam} \log \CondProb{\design[k]}{\hyperparam}
          \,,
      \end{split}
    \end{equation}
    given $\Nens>0$ samples 
    $
      \{\design[k] \sim \CondProb{\design}{\hyperparam}\, |\,\, k=1, \ldots, \Nens\}
    $
    drawn from a multivariate Bernoulli distribution with
    parameter $\hyperparam$, and the gradient of the log-probability 
    $ 
    \nabla_{\hyperparam} \log \CondProb{\design[k]}{\hyperparam}
    $ of the Bernoulli distribution  is
    \begin{equation}\label{eqn:Bernoulli_grad_log_prob}
      \nabla_{\hyperparam} \log \CondProb{\design}{\hyperparam}
        =\sum_{i=1}^{\Nsens}{\left( \frac{\design_i}{\hyperparam_i} +
          \frac{\design_i-1}{1-\hyperparam_i} \right)\vec{e}_i}  \,; \quad \hyperparam_i\in(0, 1) \,,
    \end{equation}
    where $\vec{e}_i$ the $i$th unit vector in $\Rnum^{\Nsens}$ and $\hyperparam_i$ is the $i$th entry of $\hyperparam$.  
    
    As mentioned  in~\ref{subsubsec:robust_OED_algorithm}, 
    in order to solve the outer (maximization) problem, \Cref{alg:Polyak_MaxMin} requires developing 
    the  gradient over a finite sample 
    $\uncertainparamsample\subset\uncertainparamset$ of the uncertain parameter.
    The gradient in this case is given by 
    \begin{subequations}\label{eqn:eqn:discrete_stochastic_gradien_full}
    \begin{align}
        &\nabla_{\hyperparam} 
          \Expect{\design\sim\CondProb{\design}{\hyperparam}}{
              \min\limits_{\uncertainparam\in\uncertainparamsample} \obj(\design,\uncertainparam)
          }
      \stackrel{\eqref{eqn:stochastic_gradient_hyperparam}}{\approx }
        \frac{1}{\Nens} \sum_{k=1}^{\Nens}
              \obj(\design[k],\uncertainparam^{*}) 
      \nabla_{\hyperparam} \log \CondProb{\design}{\hyperparam}
          \,,  \label{eqn:discrete_stochastic_gradient}
          \\
      &\qquad \uncertainparam^{*}
        \in  \argmin\limits_{\uncertainparam\in\uncertainparamsample}
            \obj(\design,\uncertainparam)
          \,.
    \end{align}
    \end{subequations}

    Note that both~\eqref{eqn:stochastic_gradient_hyperparam} and~\eqref{eqn:eqn:discrete_stochastic_gradien_full}
    require finding $\min_{\uncertainparam}\obj(\design[k],\, \uncertainparam)$, which is 
    the minimum value of the objective $\obj$ (over a finite number of $\uncertainparam$) for a nominal binary design $\design[k]$.
    %
    Conversely, the numerical solution of the inner optimization problem in
    Step~\ref{algstep:Polyak_MaxMin_param_opt}
    of~\Cref{alg:Polyak_MaxMin} requires the gradient 
    $
     \nabla_{\uncertainparam} \obj(\design,\uncertainparam)
      = \nabla_{\uncertainparam} \utilityfunc(\design,\uncertainparam)$, which requires
    specifying the dependency of $\obj$ on $\uncertainparam$.
    %
    The gradient with respect to $\uncertainparam=\uncertainparam^{\rm noise}$
    is derived in
    \Cref{app:subsec:Robust_A-opt_utility_gradient:param}
    and is summarized by
    %
    \begin{align}
      %
      \nabla_{\uncertainparam^{\rm noise}} \obj(\design, \uncertainparam ) 
        &= -\! \sum_{i=1}^{\Nsens}{
          \Trace{ \! 
            \F\adj
              \wdesignmat
                 \diag{\design}  
                 \del{  \Cobsnoise(\uncertainparam^{\rm noise} ) } {{\uncertainparam^{\rm noise}_i}}  
                   \diag{\design} 
            \wdesignmat
              \F
            \!} 
            \vec{e}_i
          }
        \,,\label{eqn:utility_A_opt_gradient_obs_noise_relaxed}
    \end{align}
    where $\wdesignmat= \wdesignmat(\design;\uncertainparam^{\rm noise})$ is given 
    by~\eqref{eqn:PrePost_weighted_Precision}.
    Note that the derivatives of the parameterized 
    observation error 
    covariance
    matrix in~\eqref{eqn:utility_A_opt_gradient_obs_noise_relaxed}
    is
    application dependent
    and thus will be discussed further in the context of numerical experiments; see~\Cref{sec:numerical_results}.

    %
    \paragraph{Performance enhancement via variance reduction}
      As explained in~\cite{attia2022stochastic}, the performance of an algorithm that utilizes 
      a stochastic gradient approximation can be significantly enhanced by reducing the variability
      of the stochastic gradient estimator. This can be achieved, for example, by using 
      antithetic variates or importance sampling and introducing a baseline $\baseline$ constant 
      to the objective that replaces
      $\obj(\design, \uncertainparam)$ with $\obj(\design, \uncertainparam)-\baseline$.
      The optimal value $\baseline\opt$ of the baseline is then chosen to minimize
      the total variance 
      of the stochastic estimate of the gradient.
      In this case
      (see the discussion on the optimal baseline in~\cite{attia2022stochastic}),
      the  gradient~\eqref{eqn:discrete_stochastic_gradient} is replaced with 
      \begin{subequations}\label{eqn:stoch_generalized_gradient_baseline}
      \begin{align}
        &\nabla_{\hyperparam}
          \Expect{\design\sim\CondProb{\design}{\hyperparam}}{
          \min\limits_{\uncertainparam\in\uncertainparamsample}
              \obj(\design,\uncertainparam) - \baseline\opt
          }
        \stackrel{\eqref{eqn:stochastic_gradient_hyperparam}}{\approx }
        \frac{1}{\Nens} \sum_{k=1}^{\Nens}
          \left(
            \obj(\design[k], \uncertainparam^{*}[k]) - \baseline\opt 
          \right)
        \nabla_{\hyperparam} \log \CondProb{\design}{\hyperparam}
          \,,  \label{eqn:discrete_stochastic_gradient_baseline}
          \\
        \baseline\opt
        \approx
        \widehat{\baseline}\opt
          &:=
          \frac{\sum\limits_{e=1}^{{\rm N_b}} 
            \left(
              \sum\limits_{j=1}^{\Nens} \obj(\design[e,j],\uncertainparam^{*}[e,j]) \nabla_{\hyperparam}
              \log{ \CondProb{\design[e, j]}{\hyperparam } } 
            \right)
            \tran\! 
            \left(
              \sum\limits_{j=1}^{\Nens} \nabla_{\hyperparam}
              \log{ \CondProb{\design[e, j]}{\hyperparam } } 
            \right)
            }
          { \Nens\, {\rm N_b}\, \sum\limits_{i=1}^{\Nsens}\frac{1}{\hyperparam_i-\hyperparam_i^2}}
        \,,\label{eqn:optimal_baseline_estimate} \\
        &\qquad \uncertainparam^{*}[k]
          = \argmin\limits_{\uncertainparam\in\uncertainparamsample}
            \obj(\design[k],\uncertainparam)
          = \argmin\limits_{\uncertainparam\in\uncertainparamsample}
          \Trace{\FIM(\design[k],\,\uncertainparam) }
            \label{eqn:discrete_stochastic_gradient_opt_lambda}
          \,,
      \end{align}
      \end{subequations}
      where we introduced the baseline to reduce the variability of the stochastic estimator
      as in~\eqref{eqn:discrete_stochastic_gradient_baseline}.
      In this case, 
      $\uncertainparam^{*}[k]$ is the uncertain parameter $\uncertainparam$ that attains the minimum value of $\obj$ over a finite sample $\uncertainparamsample$ for the $k$th binary design $\design[k]$ in the sample, 
      rather than minimizing a sample average approximation of the expected value 
      $\Expect{}{\obj}$.
      This reduces the stochasticity in the baseline itself and thus reduces the variability 
      of the stochastic gradient, which in turn leads to better performance of the optimization 
      algorithm.

    \paragraph{Complete algorithm statement}
      We conclude~\Cref{subsec:robust_oed_algorithm_stochastic} with a complete algorithmic description of the 
      robust stochastic steepest ascent approach in~\Cref{alg:Polyak_MaxMin_Stochastic}.

      %
      \begin{algorithm}[htbp!]
          \setstretch{1.0}
          \caption{Algorithm for
              solving the robust stochastic OED problem~\eqref{eqn:robust_stochastic_A_optimal_OED_optimization}}
          \label{alg:Polyak_MaxMin_Stochastic}
          \begin{algorithmic}[1]
            \Require{
              Initial policy parameter $\hyperparam^{(0)}\in(0, 1)^{\Nsens}$;
                  step sizes ($\gamma_1,\, \gamma_2$);
                  sample sizes $\Nens, {\rm N_b}, m$;
              and an initial sample  
                $\uncertainparamsample^{(k)}:=\{\uncertainparam^{(i)}\in\uncertainparamset|
                i=1, \ldots, k\} $ from the uncertain parameter admissible set.
            }

            \Ensure{$\design\WCopt$}

            \State{Initialize $l = 0$}

            \While{Not Converged}

              \State{Update $ l \leftarrow l+1 $ }

              \State\label{algstep:Polyak_MaxMin_Stochastic_hyperparam_opt}{
                $
                  \hyperparam^{(l+1)} \!\leftarrow\! \argmax\limits_{\hyperparam}
                      \Expect{\design\sim\CondProb{\design}{\hyperparam^{(l)}}}{
                        \min\limits_{\uncertainparam^{\prime}\in\uncertainparamsample^{(k+l-1)}}
                        \obj(\design,\uncertainparam)
                    }
                $ \Comment{Call \textsc{StocParamOpt} } 
              }

              \State\label{algstep:Polyak_MaxMin_Stochastic_param_opt}{Update 
                  $\uncertainparam^{(k+l)} 
                    \leftarrow
                    \argmin\limits_{\uncertainparam \in \uncertainparamset}
                        \Expect{\design\sim\CondProb{\design}{\hyperparam^{(l+1)}}}{
                            \obj(\design,\uncertainparam)
                        }
                        $
                    \Comment{Call
                        $
                            \textsc{NoiseOpt}  
                        $
                    }
               }

              \State{
              $
                \uncertainparamsample^{(k+l)} \leftarrow \uncertainparamsample^{(k+l-1)} \cup \{\uncertainparam^{(k+l)}\}
              $
              }

            \EndWhile

            \State{$\hyperparam\WCopt_{\uncertainparam} \leftarrow \hyperparam^{(l+1)}$}

            \State{Sample $S=\{ \design[j] \sim \CondProb{\design}{\hyperparam\WCopt_{\uncertainparam}};\,
                j=1,\ldots,m \}$,
            }

            \State{Calculate $\obj(\design,\,\uncertainparam^{k+l})$ for each $\design$ in the sample $S$}

            \State\Return{$\design\WCopt_{\uncertainparam} $: the design $\design$ with largest 
              value of $\obj$ in the sample $S$.
            }

            \vspace{8pt}
            \Procedure{StocParamOpt}{$\gamma_1, \hyperparam^{(0)}, \uncertainparamsample, {\rm N_b}, \Nens$}

                \State{Initialize $n = 0$}
                
                \While{Not Converged}

                    \State{Sample
                        $S = \{\design[j] \sim \CondProb{\design}{\hyperparam^{(n)}};\,
                            j=1,\ldots,\Nens\!\times\!{\rm N_b}  \}$
                    }
                    
                    \State{Evaluate 
                        $
                            \nabla_{\hyperparam} \log \CondProb{\design}{\hyperparam} \,\,\forall\ \design \in S \,
                        $ using~\eqref{eqn:Bernoulli_grad_log_prob}
                    }
                    
                    \For{each design $\design[k]$ in $S$ }

                        \State\label{algstep:Polyak_MaxMin_Stochastic_inner_min}{Evaluate 
                        $
                            \{\obj(\design[k], \uncertainparam) \,\,
                        \forall \uncertainparam\in\uncertainparamsample\,;\quad \uncertainparam^{*}[k]$ 
                        corresponds to the smallest value. 
                        }  \Comment{\eqref{eqn:discrete_stochastic_gradient_opt_lambda}}
                    \EndFor

                    \State{Evaluate optimal baseline estimate $\widehat{\baseline}\opt$ 
                        using~\eqref{eqn:optimal_baseline_estimate}}
                   
                    \State{Evaluate 
                        $
                            \widehat{\vec{g}}^{(n)} 
                                = \frac{1}{|S|} \sum_{\design\in S}^{}
                                \left( \obj(\design,\uncertainparam^{*}) - \widehat{\baseline}\opt \right) 
                                \nabla_{\hyperparam} \log \CondProb{\design}{\hyperparam}
                        $ 
                    }\Comment{\eqref{eqn:discrete_stochastic_gradient_baseline}}

                    \State\label{algstep:Polyak_MaxMin_Stochastic_hyperparm_update}{
                        $
                          \hyperparam^{(n+1)} \leftarrow \Trunc{0, 1}{\hyperparam^{(n)} 
                            + \gamma_1 \widehat{\vec{g}}^{(n)} } 
                        $
                    }
                    
                    \State{Update $ n \leftarrow n+1 $ }

                \EndWhile

              \State{$\hyperparam\opt \leftarrow \hyperparam^{(n)}$}

              \State{\Return{$\hyperparam\opt$}}

            \EndProcedure

              \vspace{5pt}
            
                %

                    






                %
                \vspace{5pt}
                \Function{NoiseOpt}{$\gamma_2, \hyperparam, \uncertainparamsample, \Nens$,}

                    \State\label{algstep:NoiseOpt_Begin}{Initialize $n = 0$}
                    \State{Initialize 
                        $\uncertainparam^{(0)} $ = mean($\uncertainparamsample$)}  
                    \State{Sample
                        $S = \{\design[j] \sim \CondProb{\design}{\hyperparam};\,
                            j=1,\ldots,\Nens  \}$
                    }
                    
                    \While{Not Converged}
        
                        \State{Update $ n \leftarrow n+1 $ }

                        \For{$j \gets 1$ to $\Nens$}
                            \State\label{algstep:Polyak_MaxMin_Stochastic_obs_noise_gradient}{
                                Calculate $\vec{g}^{(n)}[j] 
                                    = \nabla_{\uncertainparam} \obj(\design[j], \uncertainparam )$ 
                            }\Comment{Use \eqref{eqn:utility_A_opt_gradient_obs_noise_relaxed}}
                            
                        \EndFor

                        \State $\vec{g}^{(n)} \gets \frac{1}{\Nens}\sum_{j=1}^{\Nens} \vec{g}^{n}[j] $
                        
                        \State{Update $\uncertainparam^{(n+1)} \leftarrow \uncertainparam^{(n)} - \gamma_2 \vec{g}^{(n)} $
                        }

                    \EndWhile

                  \State\label{algstep:NoiseOpt_End}{\Return{$\uncertainparam\opt$}}

                \EndFunction
        
          \end{algorithmic}

      \end{algorithm}
      %

    At each iteration of~\Cref{alg:Polyak_MaxMin_Stochastic}, Step~\ref{algstep:Polyak_MaxMin_Stochastic_hyperparam_opt} 
    seeks an optimal policy $\hyperparam$ given the current finite sample of the uncertain parameter $\uncertainparamsample$, 
    then Step~\ref{algstep:Polyak_MaxMin_Stochastic_param_opt} expands $\uncertainparamsample$ by finding an optimal value of the uncertain 
    parameter $\uncertainparam$ given the policy defined by the current value of $\hyperparam$.
    Because the inner optimization problem in~\eqref{eqn:robust_stochastic_binary_optimization} seeks to find $\argmin_{\uncertainparam} \obj(\design,\uncertainparam)$ for the current policy parameter $\hyperparam$, one has to either find the minimum over all possible designs
    (which requires brute force enumeration and is thus impractical,) or follow a stochastic approach whether the inner optimization problem is equivalently replaced with $\argmin_{\uncertainparam} \Expect{\design\sim\CondProb{\design}{\hyperparam}}{\obj(\design,\uncertainparam)}$ which is then solved following a stochastic gradient approach 
    as described by Steps~\ref{algstep:NoiseOpt_Begin}--\ref{algstep:NoiseOpt_End} of~\Cref{alg:Polyak_MaxMin_Stochastic}.

    Note that in Step~\ref{algstep:Polyak_MaxMin_Stochastic_hyperparam_opt}
    of~\Cref{alg:Polyak_MaxMin_Stochastic}, the optimal policy can be
    degenerate, that is, the probabilities $\hyperparam_i\in\{0, 1\}$. 
    This makes the inner optimization problem~\ref{algstep:Polyak_MaxMin_Stochastic_param_opt} search only over this degenerate policy
    (the worst-case scenario is looked upon only at this corner).
    Hence, we may not want to solve the outer problem exactly.  
    Rather, we may want to take a only few steps toward the optimal policy.
    Moreover, because the expectation surface is not necessarily convex, the optimizer can 
    get stuck in a local optimum. 
    In this case, when the optimal solution is unique, it is guaranteed to be binary.
    Thus one can add a random perturbation to the resulting policy and proceed until 
    convergence to a degenerate policy is achieved. 
    If the optimal solution is nonunique, however, the optimal solution can be a nonbinary probability, which 
    is then sampled seeking a solution in the optimal set.

\section{Numerical Experiments}
\label{sec:numerical_results}
  %
  In this section we discuss numerical results of the proposed robust OED approach 
  obtained by using a two-dimensional advection problem that simulates the evolution 
  of a contaminant concentration in a closed and bounded domain.
  All numerical experiments presented here are carried out by using the extensible software package
  \pyoed~\cite{attia2023pyoed}, with scripts to recreate experiments in this work made publicly available from~\cite{attia2023pyoedRepo}.

  We utilize an experiment that simulates the evolution of 
  the concentration of a contaminant in a closed domain following a similar setup to that
  in~\cite{attia2022stochastic}.
  The goal is to optimize sensor placement for accurate 
  identification of the contaminant source, that is, the distribution of the contaminant at
  the initial time of the simulation, or for optimal data acquisition. 
  Optimality here is defined following an A-optimality 
  approach where the observation noise variance is not known exactly.
  Specifically, we assume the observation error covariance matrix $\Cobsnoise=\Cobsnoise(\uncertainparam)$ 
  is parameterized with some parameter $\uncertainparam\in\uncertainparamset$, where 
  $\uncertainparamset$ is known.
  We start by describing the experimental setup of the simulation and the observation models
  (hence the parameter-to-observable map), as well as the prior and the observation noise model.
  Then we describe the robust OED optimization problem and discuss the results and the 
  performance of~\Cref{alg:Polyak_MaxMin_Stochastic}.

  \subsection{Model setup: advection-diffusion}
    The experiment uses an advection-diffusion (AD) model to simulate the spatiotemporal 
    evolution of the contaminant field $u=\xcont(\vec{x}, t)$ 
    over a simulation timespan $[0, T]$ in a closed domain $\domain$.
    Specifically, the contaminant field $u = \xcont(\mathbf{x}, t)$ 
    is governed by the AD equation 
      \begin{equation}\label{eqn:advection_diffusion}
        \begin{aligned}
          \xcont_t - \kappa \Delta \xcont + \vec{v} \cdot \nabla \xcont &= 0     
            \quad \text{in } \domain \times (0,T],   \\
          \xcont(x,\,0) &= \theta \quad \text{in } \domain,  \\
          \kappa \nabla \xcont \cdot \vec{n} &= 0  
            \quad \text{on } \partial \domain \times [0,T]\,,
        \end{aligned}
    \end{equation}
    where $\kappa\!>\!0$ is the diffusivity, $\vec{v}$ is the velocity field, and 
    $\domain\!=\![0, 1]^2$ is the spatial domain with two rectangular regions
    simulating two buildings inside the domain where the contaminant flow does not pass 
    through or enter. 
    The domain boundary $\partial \domain$ includes both the external boundary and the 
    walls of the two buildings.

    The finite-element discretization of the AD model~\eqref{eqn:advection_diffusion} 
    is sketched in~\Cref{fig:AD_Setup} (left).
    The ground truth of the contaminant source, that is, the initial distribution of
    the contaminant used in our experiments, is shown in~\Cref{fig:AD_Setup} (right).
    Following the same setup in~\cite{PetraStadler11,attia2022stochastic}, the velocity field 
    $\vec{v}$ is assumed to be known and is obtained by solving a steady-state Navier--Stokes equation
    with the side walls driving the flow; see~\Cref{fig:AD_Setup} (middle).
    \begin{figure}[htbp!]
      \centering
      \includegraphics[width=0.25\linewidth]{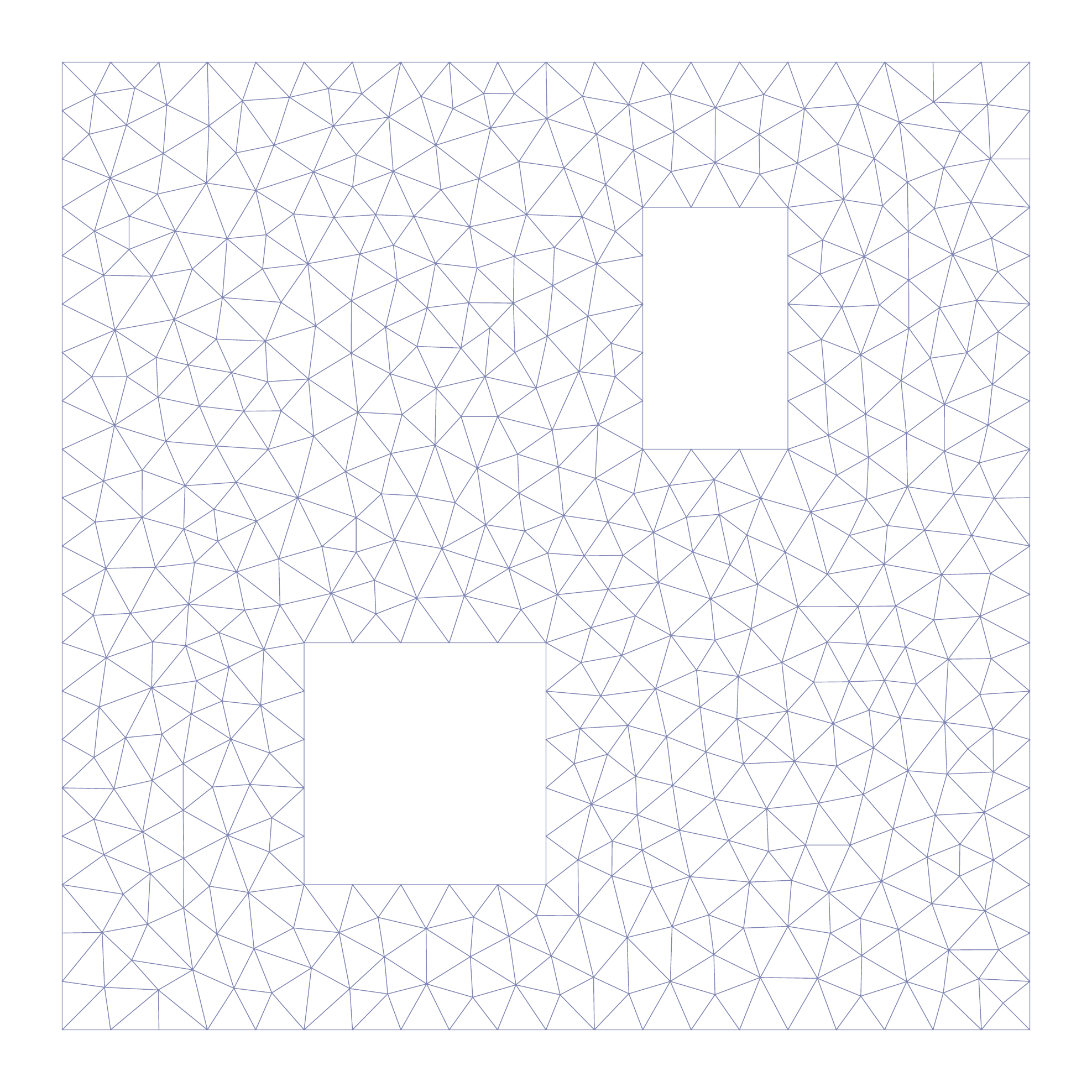}
      \quad
      \includegraphics[width=0.25\linewidth]{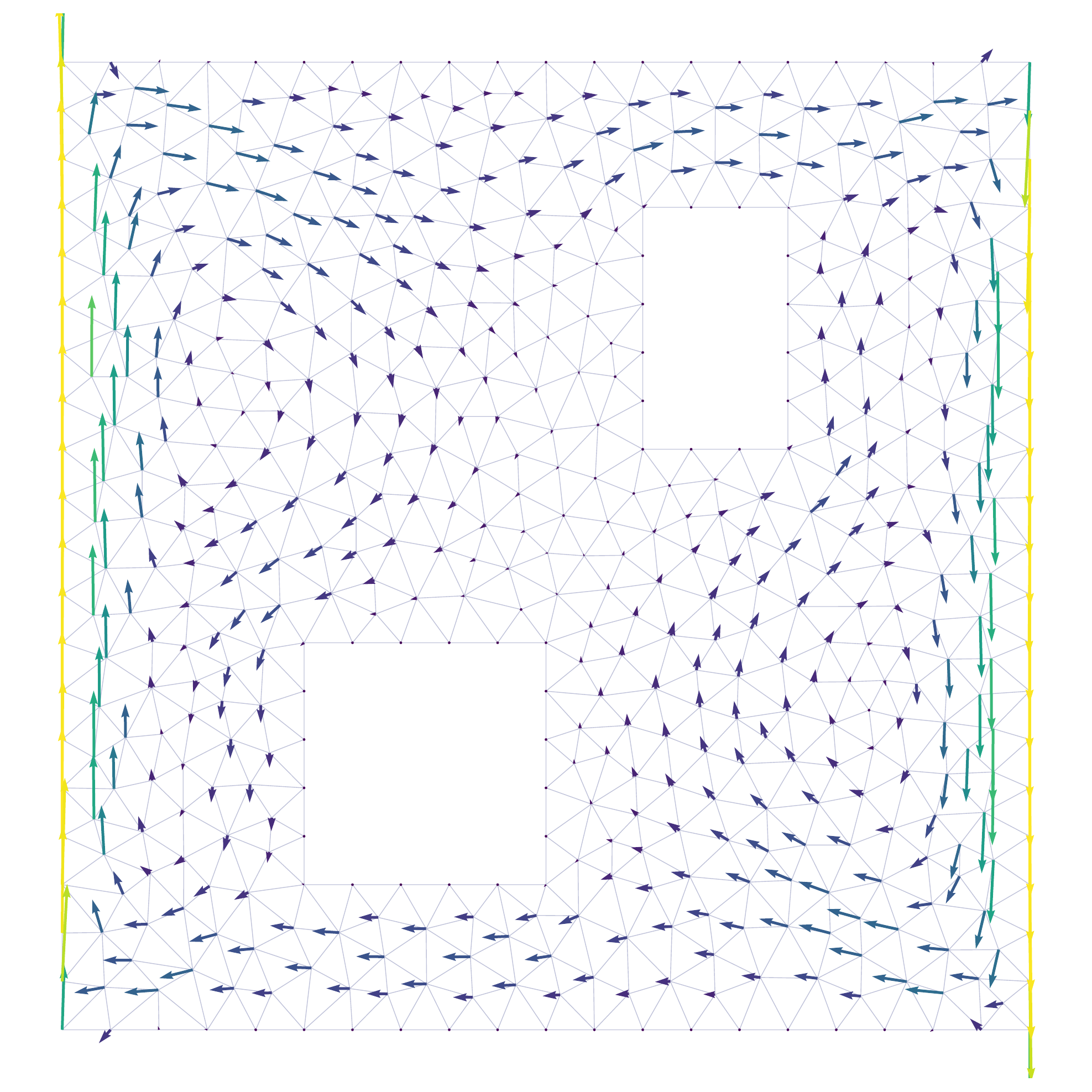}
      \quad
      \includegraphics[width=0.25\linewidth]{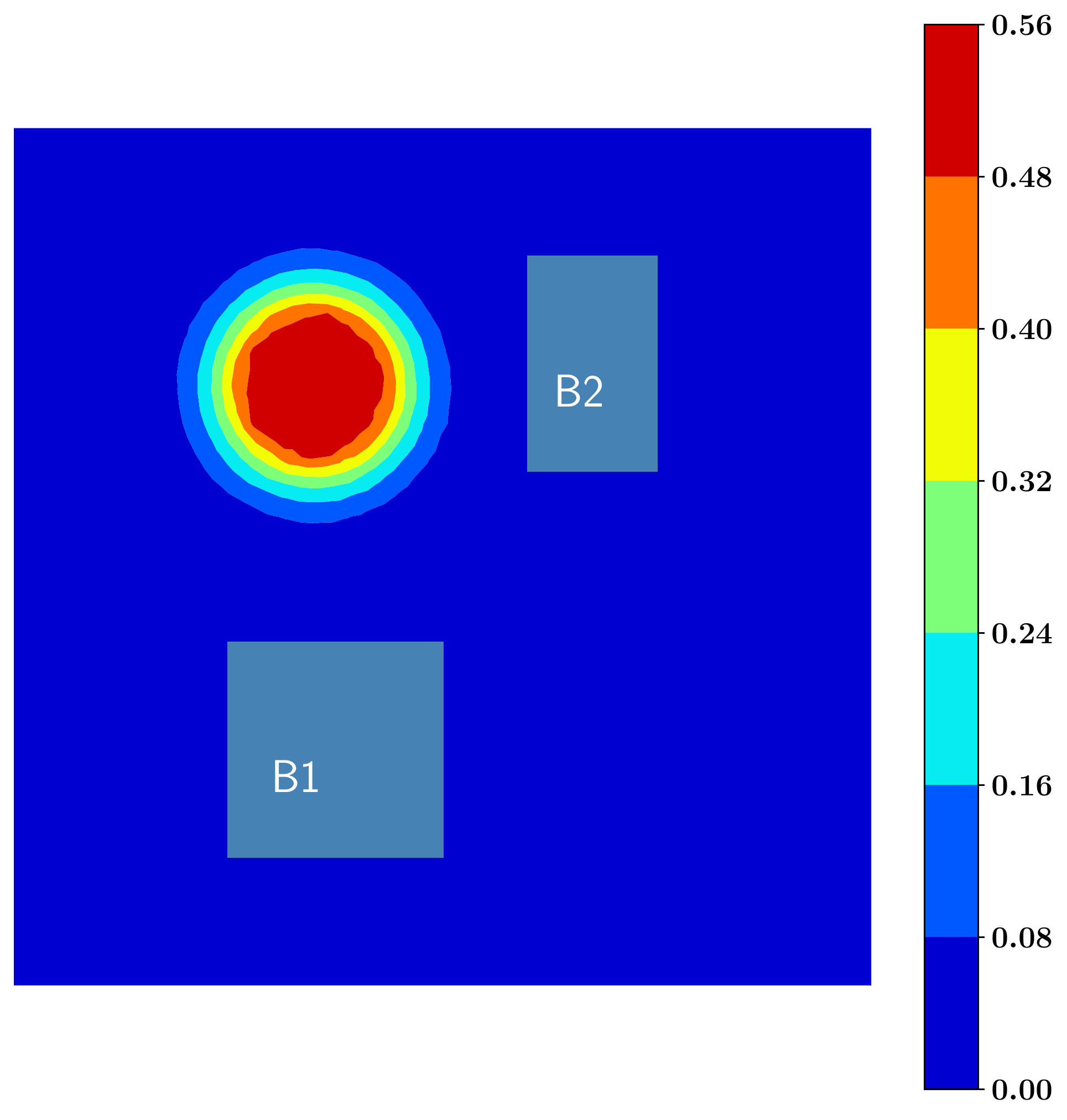}
        \caption{Advection-diffusion model~\eqref{eqn:advection_diffusion} domain and 
          discretization, the velocity field, and the ground truth 
          (the true initial condition).
          The two squares represent the two buildings (labeled B1 and B2, respectively) where the flow 
          is not allowed to enter.
          Left: The physical domain $\domain$, including the outer boundary 
          and the two buildings, and the model grid discretization.
          Middle: The constant velocity field $\vec{v}$.
          Right: The true initial condition (the contaminant concentration at the initial time).
        }
      \label{fig:AD_Setup} 
    \end{figure}
    %

  \subsection{Observational setup}
    We consider a set of fixed candidate locations; that is, the sensor locations do not change over time.
    First we consider the case where only two candidate sensors are allowed, 
    and then we consider an increasing number $\Nsens$ of candidate spatial observational gridpoints distributed 
    in the domain; see~\Cref{fig:AD_Observatin_Grids}.
    \begin{figure}[htbp!]
      \centering
      \includegraphics[width=0.25\linewidth]{./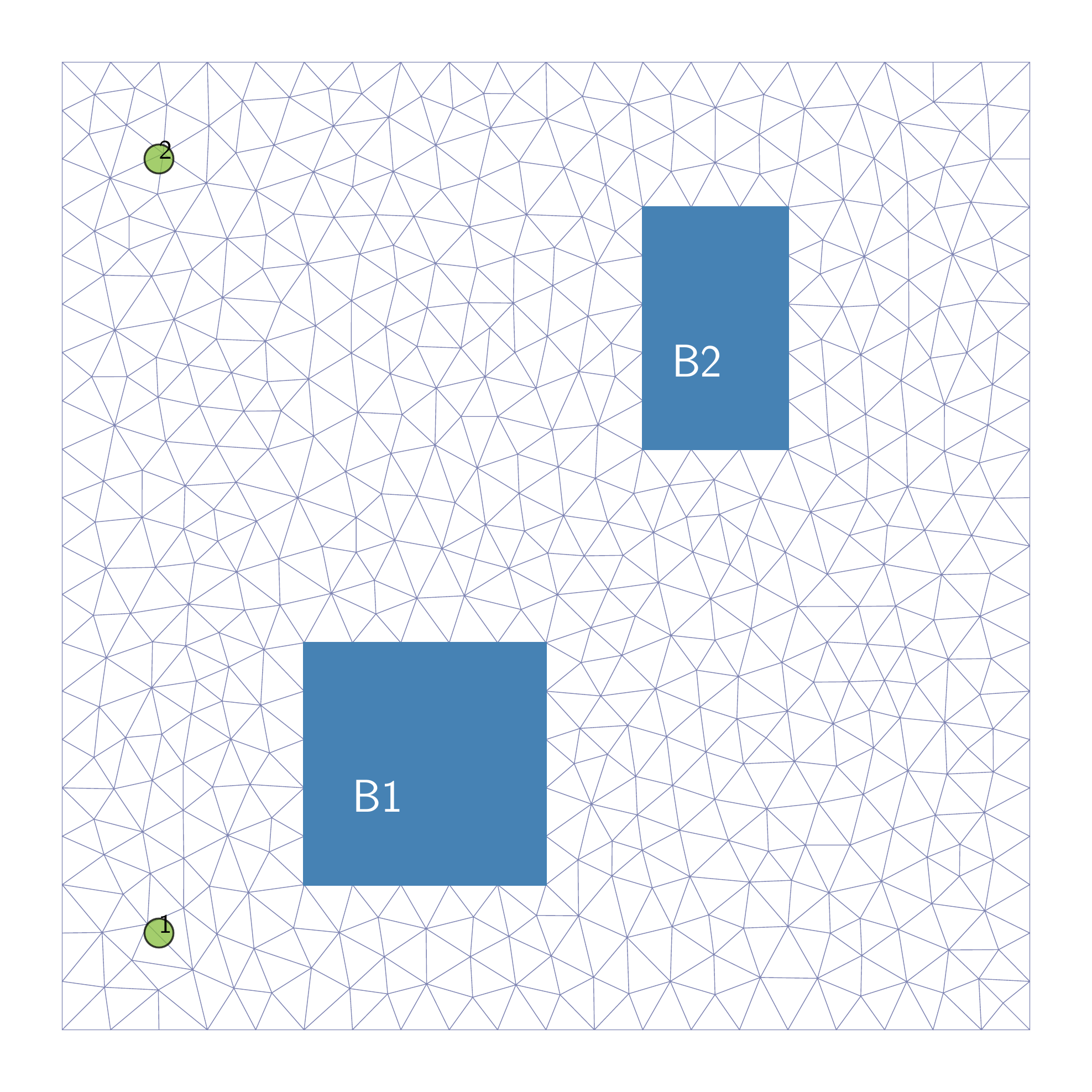}
      \quad
      \includegraphics[width=0.25\linewidth]{./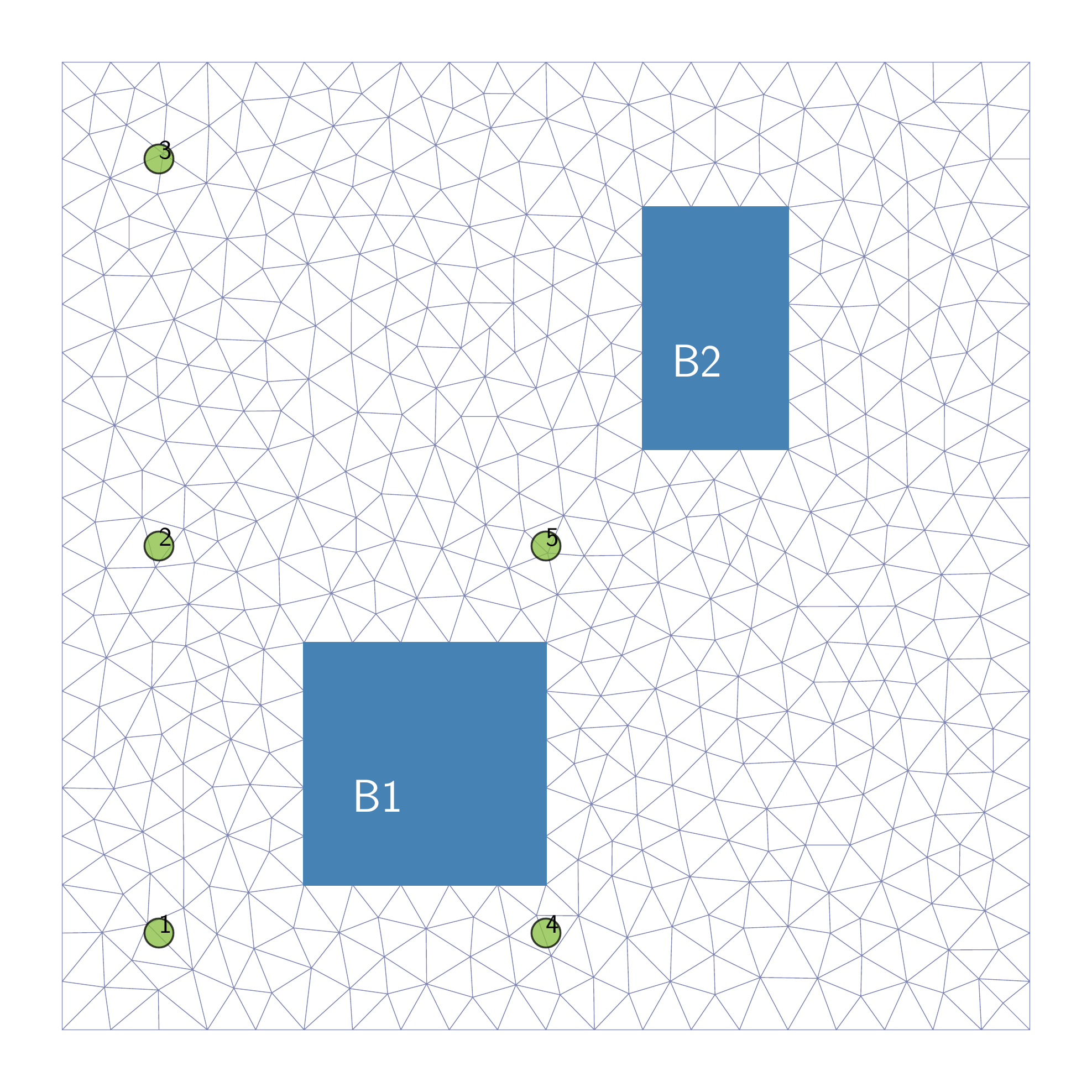}
      \includegraphics[width=0.25\linewidth]{./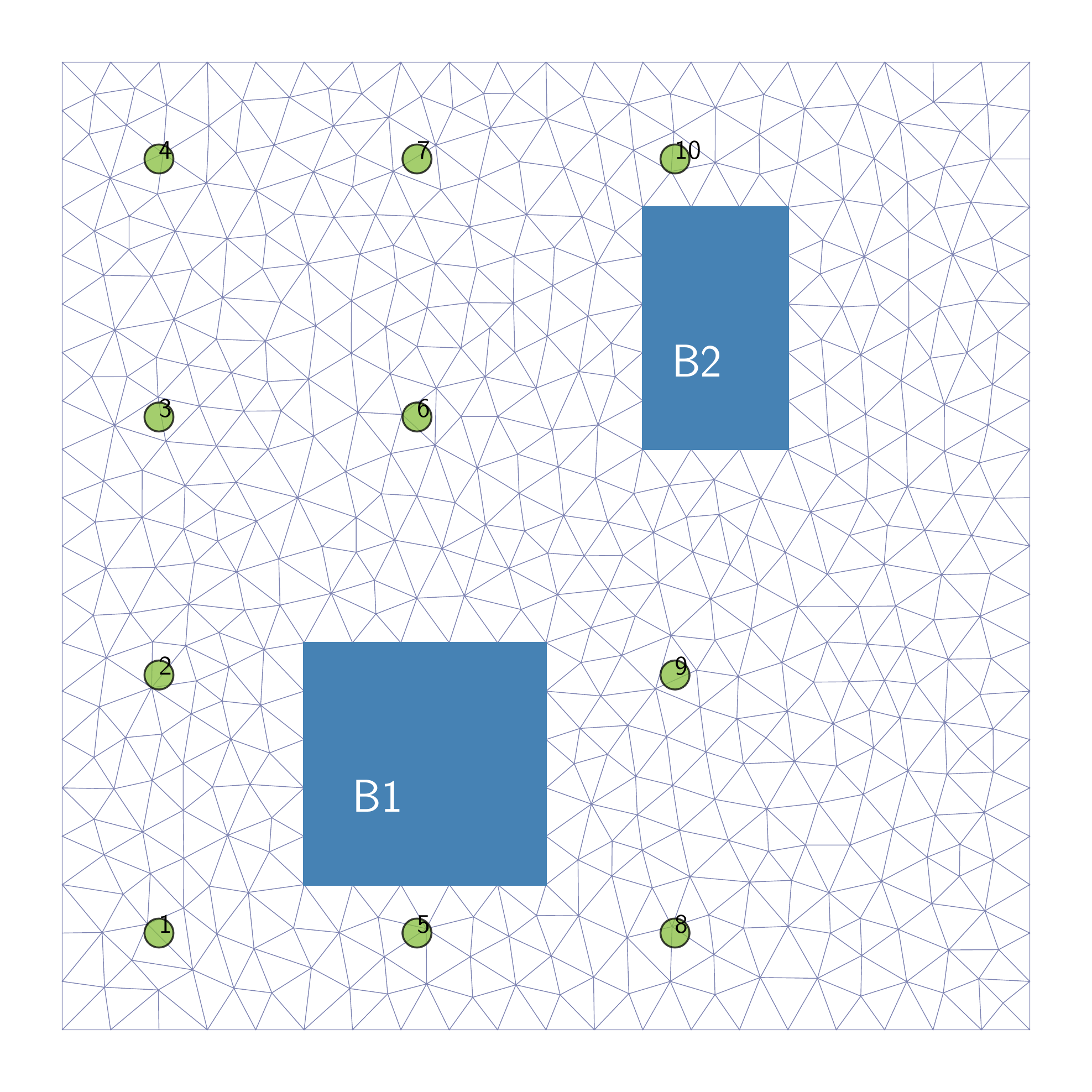}
        \caption{Varying number of candidate sensor locations $\Nsens$ distributed in the domain. 
            Candidate sensors are labeled from $1$ to $\Nsens$ in each case. 
        }
      \label{fig:AD_Observatin_Grids} 
    \end{figure}
    The sensors capture the concentration of the contaminant at the sensor locations at 
    a predefined set of one or more observation time instances 
    $\{ t_1,\, \ldots,\, t_{\nobstimes} \} \subset [0, T]$.
    The number of observation time instances is $\nobstimes\!=\!16$ and is given by 
    $t_1\!+\! s \Delta t$, with initial observation made at
    $t_1\!=\!1$;  $\Delta t\!=\!0.2$ is the simulation timestep of the model;
    and $s=0, 1, \ldots, 20$. 
    Thus, an observation vector $\obs\in\Rnum^{\Nobs}$ represents the measured spatiotemporal 
    concentrations, that is, observations at the predefined locations and time instances 
    with the observation space dimension $\Nobs \!=\! \Nsens \!\times\! \nobstimes$. 

    The observation error distribution is $\GM{\vec{0}}{\Cobsnoise}$, with
    $\Cobsnoise\!\in\! \Rnum^{\Nobs\times\Nobs}$ describing spatiotemporal
    correlations of observational errors, with the design matrix set to the block diagonal matrix 
    $\designmat = \mat{I} \otimes \diag{\design}$ to cope with the fact that spatial observations are stacked to form 
    a spatiotemporal observation vector. Here $\mat{I}\in{\nobstimes \times \nobstimes}$ is an identity matrix, and 
    $\otimes$ is the matrix Kronecker product.

  \subsection{Forward operator, adjoint operator, and the prior}
    The parameter-to-observable map (forward operator) $\F$ is linear and  
    maps the model initial condition (model parameter) $\iparam$ to the observation space.
    Specifically, $\F$ represents a forward simulation over the time interval $[0, T]$ 
    followed by applying a restriction to the sensor locations at the observation times.
    %
    The model adjoint is given by $\F\adj \!:= \!\mat{M}\inv\mat{F}\tran$, where
    $\mat{M}$ is the finite-element mass matrix.
    %
    Following~\cite{bui2013computational}, we use a Gaussian prior $\GM{\iparb}{\Cparampriormat}$
    where the prior covariance matrix $\Cparampriormat$ is a discretization of $\mathcal{A}^{-2}$,
    with a Laplacian $\mathcal{A}$.

  \subsection{The uncertain parameter}
  \label{subsec:uncertain_parameters_setup}
    We assume $\Cobsnoise$ is not exactly known and is parameterized using an uncertain parameter 
    $\uncertainparam$.
    Specifically, in the numerical experiments, we define the observation noise 
    covariance matrix as $\Cobsnoise:\diag{\uncertainparam^{2}}$, where the square is 
    evaluated elementwise  and each entry of 
    $\uncertainparam$ can assume any value in the interval $[0.02,\, 0.04]$.
    Thus, the derivative of the noise matrix required for 
    evaluating~\eqref{eqn:utility_A_opt_gradient_obs_noise_relaxed} 
    takes the form 
    $
    \del{\Cobsnoise(\uncertainparam)}{\uncertainparam_i}=2\uncertainparam_i\vec{e}_i\vec{e}_i\tran 
    $
    over the same domain.
    The bounds of the uncertain parameters are modeled by asserting the following bound constraints on 
    the uncertain parameter in the inner optimization problem:
    \begin{equation}
        0.02 \leq \uncertainparam_i \leq 0.04\,,\quad \forall i=1,\ldots,\Nsens\,. 
    \end{equation}
    %

  \subsection{Optimization setup}
  \label{subsec:optimization_setup}
    As suggested in~\cite{attia2022stochastic}, \Cref{alg:Polyak_MaxMin_Stochastic} is initialized 
    with initial Bernoulli parameters (initial policy)
    $
    \hyperparam^{(0)}\in(0, 1)^{\Nsens} 
    = \left(\hyperparam_1^{(0)},\ldots, \hyperparam_{\Nsens}^{(0)} \right)\tran 
    = \left(0.5,\ldots, 0.5 \right)\tran 
    $; that is, all sensors have equal probability to be active or inactive.
    The maximum number of iterations $\ell$ for all experiments is set to $100$,
    and 
    the algorithm terminates if no improvement
    is achieved in either the inner or the outer optimization problems 
    or if the maximum number of iterations is reached. 
    Lack of improvement in the optimization routines (inner and outer) is realized by 
    setting $\textsc{tol}=1e\!-\!8$ and $\textsc{pgtol}=1e\!-\!8$, where
    the optimizer terminates if either of the following conditions is satisfied: 
      $|\stochobj^{n} - \stochobj^{n-1}|< \textsc{tol}$, where $\stochobj^{(n)}$ is the value of the stochastic 
      objective in the outer problem at the $n$th iteration, or
      $\max_{i = 1, ..., n} | g_i | \leq \textsc{pgtol}$, where $g_i$ is the $i$th 
      component of the projected gradient of the respective optimization problems. 
    The learning rate of the outer stochastic optimization problem 
    (Step~\Cref{algstep:Polyak_MaxMin_Stochastic_hyperparam_opt}) is 
    $\gamma_1=1e\!-\!4$ with the maximum number of iterations 
    set to $5$; see Step~\ref{algstep:Polyak_MaxMin_Stochastic_hyperparm_update}.
    A combination of a small learning rate and a small number of iterations is chosen to prevent 
    the stochastic optimization routine from converging quickly
    to a degenerate policy, which can limit the ability of the algorithm to 
    explore the uncertain parameter space before convergence.

    The inner optimization problem (Step~\Cref{algstep:Polyak_MaxMin_Stochastic_param_opt}) is solved by 
    using a limited-memory Broyden-Fletcher-Goldfarb-Shanno (LBFGS) routine~\cite{zhu1997algorithm} to 
    enforce the bound constraints defined by the uncertain parameter domain described 
    in~\Cref{subsec:uncertain_parameters_setup}, 
    that is, $\uncertainparam_i \in [0.02, 0.04]\, \forall\, i=1,\ldots,\Nsens$. 
    The step size $\gamma_2$ is optimized by using a standard line-search approach.
    In our implementation we use a \pyoed optimization routine that employs an LBFGS implementation provided by the Python scientific 
    package \textsc{SciPy}~\cite{2020SciPy-NMeth}.

    After the algorithm terminates, it generates a sample of $m=5$ designs sampled from the final policy.
    If the final policy $\hyperparam\opt$ is degenerate, all sampled designs are identical and are equal 
    to the policy $\hyperparam\opt$. This is typically achieved if the optimal solution is unique.
    Otherwise, the designs are generated as samples from a multivariate Bernoulli 
    distribution with success probability $\hyperparam\opt$.
    The algorithm picks the design associated with the highest value of the objective.
    However, we also inspect the whole sample for verification in our experiments. 
    
    The stochastic gradient~\eqref{eqn:stoch_generalized_gradient_baseline} is evaluated by setting 
    ensemble sizes to $\Nens={\rm N_b}=32$. 
    While in our experiments we noticed similar performance with a much smaller number of ensembles,
    we use these settings for statistical consistency.

  \subsection{Numerical results}
  \label{subsubsec:numerical_results}
    Here we show numerical results for the experiments described above.
    %
    
    \subsubsection{Results with 2 candidate sensors}
      We show results for $\Nsens=2$ candidate sensors using~\Cref{alg:Polyak_MaxMin_Stochastic}.
      The penalty term is discarded here; this is equivalent to setting the penalty parameter to 
        $\regpenalty=0$ in~\eqref{eqn:robust_stochastic_A_optimal_OED_optimization}.
      \Cref{fig:AD_NumSensors_2_Surface} shows the progress and results obtained by 
      applying~\Cref{alg:Polyak_MaxMin_Stochastic}.
      Specifically, \Cref{fig:AD_NumSensors_2_Surface} (left) illustrates the behavior 
      (over consecutive iterations) of the optimization algorithm compared with expectation surfaces 
      corresponding to all values of the uncertainty parameter $\uncertainparam$.
      \Cref{fig:AD_NumSensors_2_Surface} (right) enables comparing the results of the optimization procedure 
      with respect to a brute-force search.
      Since we cannot enumerate all possible values of the uncertain parameter 
      (unless they form a small set), we carry out the brute-force search over the sampled 
      set $\uncertainparamsample$.
      \begin{figure}[htbp!]
        \center
          \includegraphics[width=0.48\textwidth]{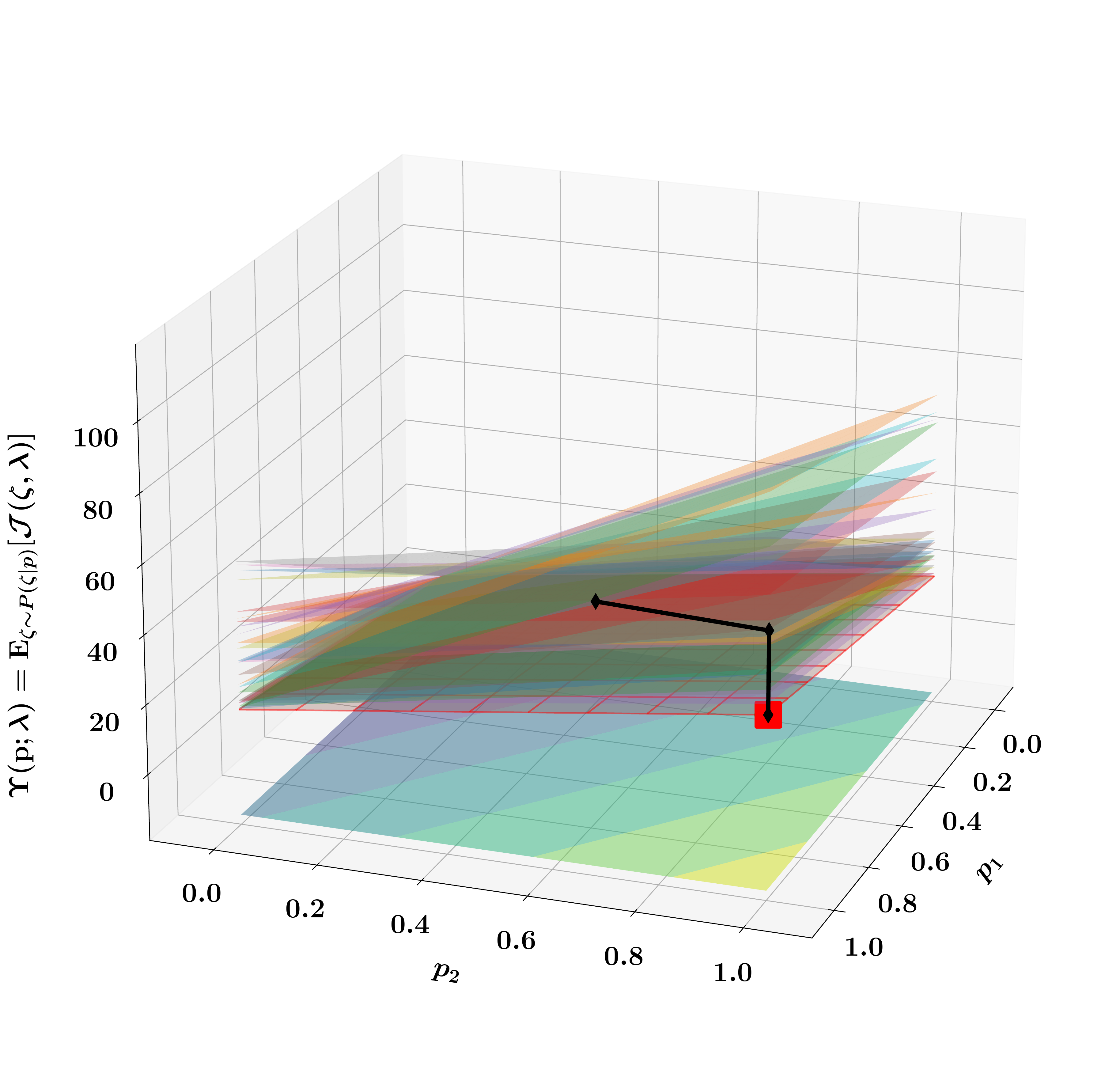}
          \includegraphics[width=0.48\textwidth]{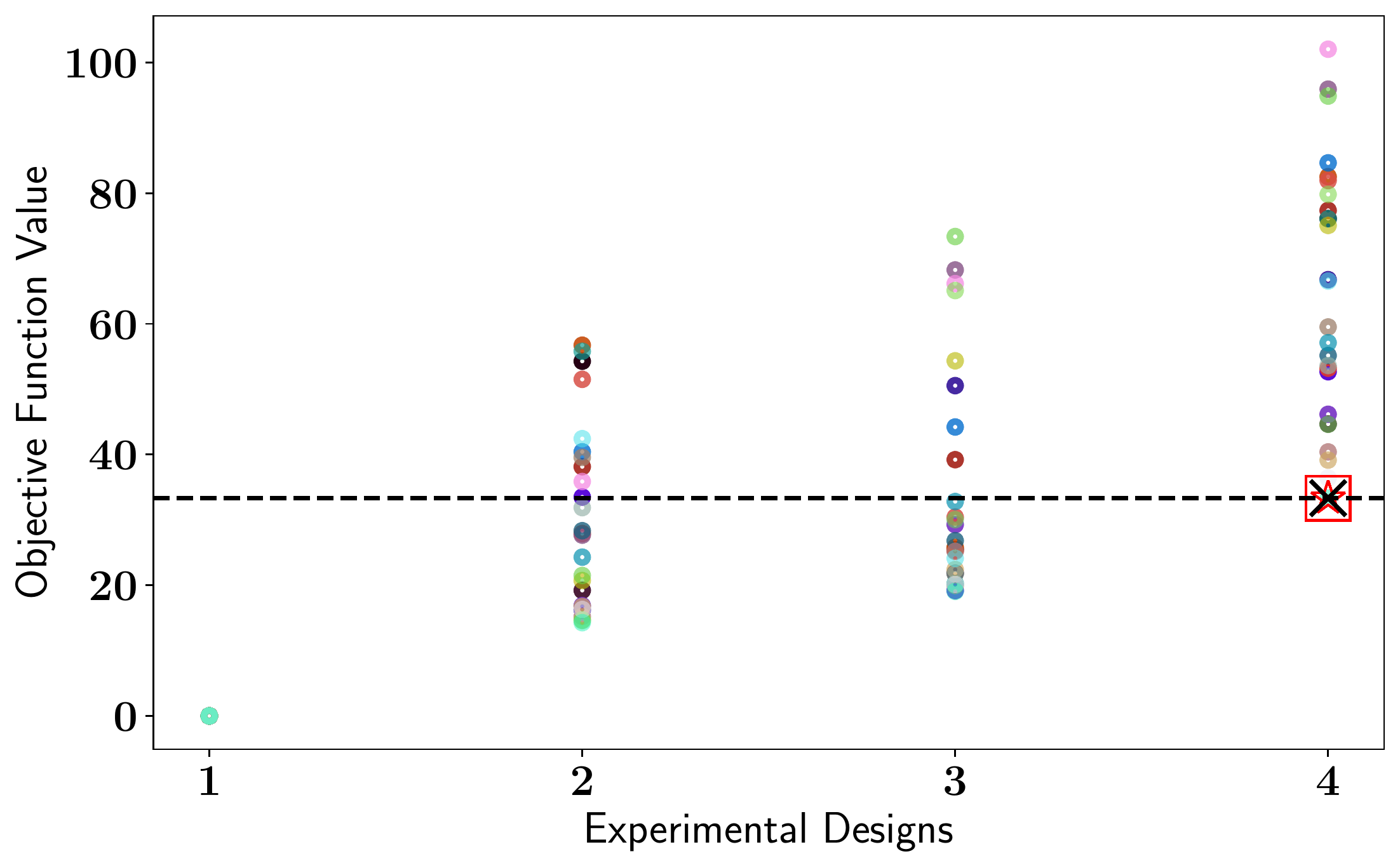}
        \caption{
            Left: surfaces of the stochastic objective function $\stochobj(\design)$
            for multiple values of the uncertainty parameter
            $\uncertainparam=(\sigma_1, \sigma_2)$
            (generated while solving the inner minimization problem in~\Cref{alg:Polyak_MaxMin_Stochastic}).  
            Results (optimization path) of~\Cref{alg:Polyak_MaxMin_Stochastic} are
            also highlighted. 
            In each of the two directions $10$ equally spaced points (values of $\hyperparam_i$) 
            are taken  to create the surface plots.
            Right: the value of the objective $\obj(\design, \uncertainparam)$ evaluated at all possible 
            designs $\design\in\{0, 1\}^{\Nsens=2}$ over the finite sample $\uncertainparamsample$.
            Results are shown for the $4$ possible designs indexed from $1$ to $4$ using the enumeration 
            scheme used in~\eqref{eqn:stochastic_objective}.
            For each value of the uncertain parameter $\uncertainparam$ we select a unique color 
            and plot the values of $\obj$ at all $4$ possible designs as circles of the same color. 
            The optimal solution generated by~\Cref{alg:Polyak_MaxMin_Stochastic} is marked as a red star.
            The black dashed line is plotted at the level of the optimal objective value and thus enables 
            identifying the quality (e.g., optimality gap) of the solution returned by the optimization algorithm.
            Here, no regularization (e.g., sparsification) is enforced; that is, we set the penalty parameter 
            to $\regpenalty=0$.
        }
      \label{fig:AD_NumSensors_2_Surface}
      \end{figure}

      As shown in \Cref{fig:AD_NumSensors_2_Surface} (right), the global optimum solution 
      $\hyperparam\opt=(1, 1)\tran$ is unique, and the 
      algorithm successfully recovers the global optimal solution in this case after $1$ iteration.
      Note that at each iteration of the algorithm the policy parameter $\hyperparam$ is updated, 
      and then the uncertain parameter $\uncertainparam$ is updated, which explains the result 
      in~\Cref{fig:AD_NumSensors_2_Surface} (left). 
      The optimal solution, that is, 
      the optimal policy $\hyperparam\opt$, in this case is degenerate, with all $5$ design samples returned 
      by the algorithm being identical (all equal to the optimal policy $\design=(1, 1)\tran$) as indicated 
      by the red star in \Cref{fig:AD_NumSensors_2_Surface} (right).
      Note that the expectation surfaces formed by the different values of the uncertain parameter intersect,
      as shown in~\Cref{fig:AD_NumSensors_2_Surface} (left). This means that the lower bound is not formed 
      by one specific value of the uncertain parameter, and hence solving the OED optimization problem 
      using a nominal value is likely to result in erroneous results as discussed earlier 
      in~\Cref{subsec:robust_binary_OED_insight}.

    \subsubsection{Results with 5 candidate sensors}
      For $\Nsens=5$ the number of possible observational configurations 
      (that is, the number of possible binary designs) is $2^{5}=32$. 
      Thus, we can obtain the optimal solution by performing a brute-force search over these possible designs,
      in combination with the sampled set $\uncertainparamsample$ of the uncertain parameter, and use it 
      to validate the output of the proposed approach.
      In this experiment we enforce sparsity by setting the penalty term using a sparsity-promoting function.
      Specifically, we use the $\ell_0$ norm to enforce sparsity on the robust design, and we define the penalty function as
      $\penaltyfunction{\design}:= \wnorm{\design}{0}^{2}$. 
      Note that since the design space is binary, $\ell_0$ and $\ell_1$ are equivalent and are both nonsmooth. 
      We set the penalty parameter to $\regpenalty=10$ to achieve a significant level of sparsity. 
      The penalty  parameter $\regpenalty$  generally can be selected to achieve a desired level of sparsity 
      by using heuristic approaches such as the L-curve, which is well developed in the regularization context.
      Figure~\ref{fig:AD_NumSensors_5_L1_Weight_-10} shows the results of~\Cref{alg:Polyak_MaxMin_Stochastic}
      for this experiment.  
      \begin{figure}[htbp!]
      \center
        \includegraphics[width=0.48\textwidth]{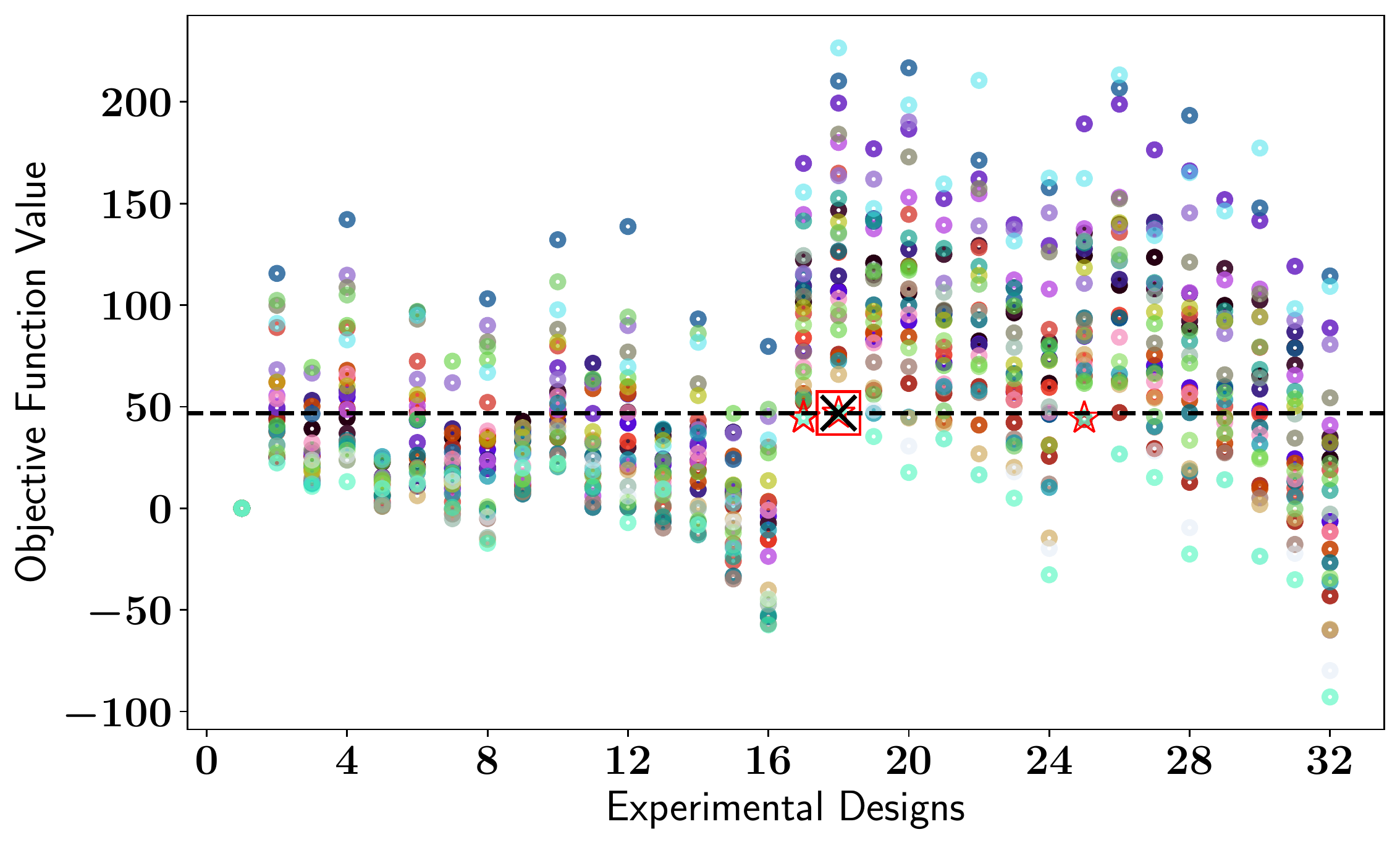}
        \includegraphics[width=0.48\textwidth]{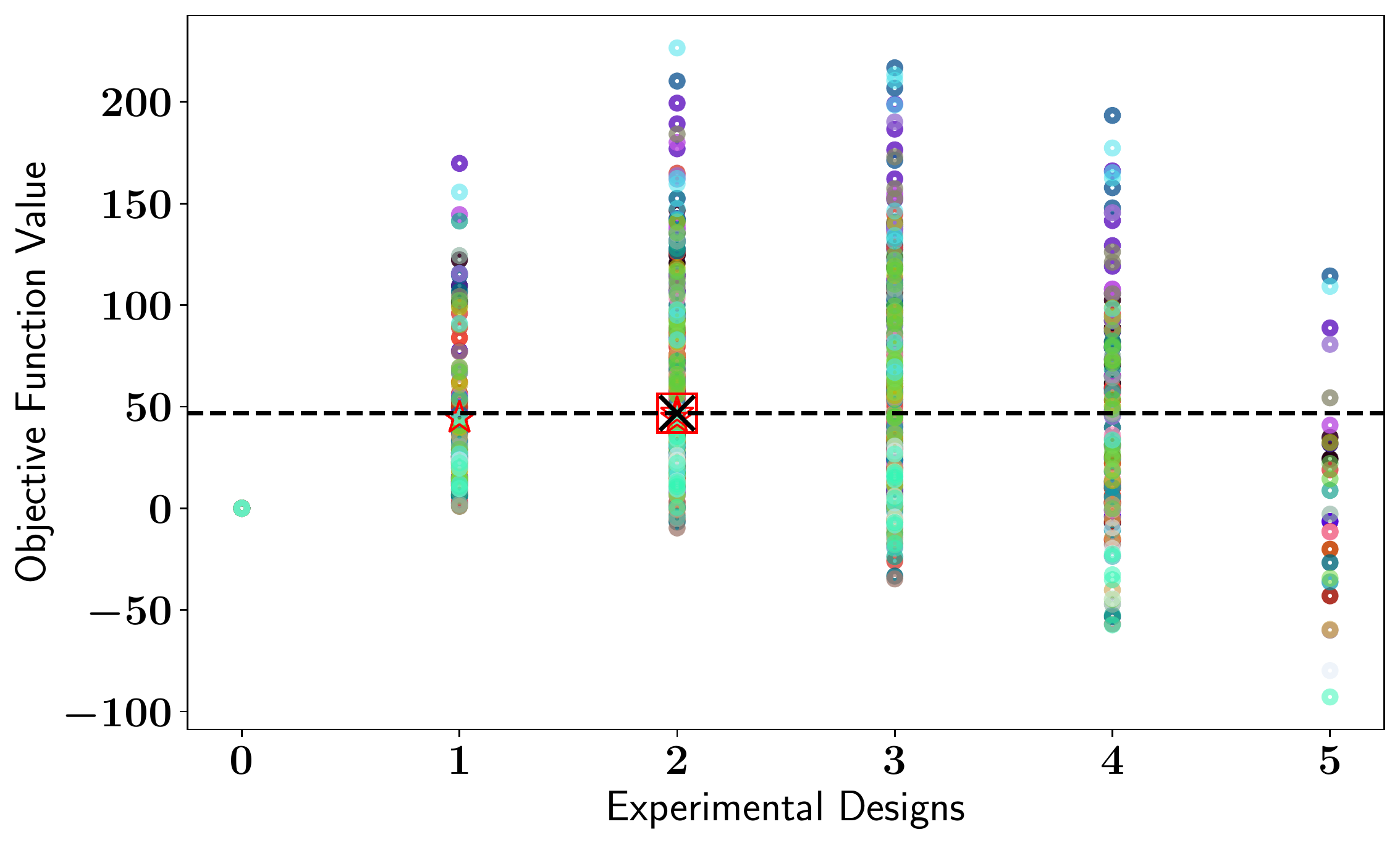}
          \caption{The number of possible designs is $2^{\Nsens}=2^5=32$, each of which is plotted on the $x$-axis.
            For each design, the value of the objective is plotted for all values of the uncertain parameter 
            in the sample generated by the optimization procedure, that is, $\uncertainparam\in\uncertainparamsample$.
            Each value of $\uncertainparam$ has a distinctive color, and the (recommended) optimal solution is marked 
            as a red star.
            The global optimum value (black dashed line) is obtained by finding the max-min value over all possible 
            designs and all values of the uncertain parameter $\uncertainparam\in\uncertainparamsample$ in the 
            sample generated by the optimization algorithm.
            Left: The designs are ordered on the x-axis using the enumeration scheme used in~\eqref{eqn:stochastic_objective}.
            Right: Same as the results in the left panel, but the designs are grouped on the x-axis based on the number of 
            nonzero entries, ranging from $0$ to $\Nsens=5$.
          }
        \label{fig:AD_NumSensors_5_L1_Weight_-10}
      \end{figure}

      \Cref{fig:AD_NumSensors_5_L1_Weight_-10} (left) shows the objective values evaluated at the sample of designs 
      (here $5$ designs are sampled from the final policy) returned by the optimization algorithm after it was terminated.
      The algorithm is terminated here at the maximum number of iterations ($100$), with the final policy being 
      nondegenerate; that is, some entries of the Bernoulli activation probabilities $\hyperparam$ are nonbinary, 
      giving us a chance to explore multiple high-quality designs near the optimal solution. 
      The sampled designs either cover or are very close the global optimum. 
      Thus the algorithm can point to multiple candidate solutions at or near the global optimum 
      and give practitioners or field experts a final decision if needed. 
       Here, however, the solution recommended by the algorithm (the design associated with largest objective value among the sampled designs) 
      in fact matches the global optimum solution.
      The level of sparsity in the designs sampled from the final policy can be better inspected by 
      reconfiguring the brute-force plot, namely,~\Cref{fig:AD_NumSensors_5_L1_Weight_-10} (left), 
      such that the designs on the $x-$axis are grouped by the number of active sensors, that is, the number of entries 
      in $\design$ equal to $1$ as shown in~\Cref{fig:AD_NumSensors_5_L1_Weight_-10} (right). 
      
      The sensor locations suggested by the algorithm here, compared with the global robust optimal solution, are shown in 
      \Cref{fig:AD_NumSensors_5_L1_Weight_-10_Designs}.
      \begin{figure}[htbp!]
      \center
        \includegraphics[width=0.24\textwidth]{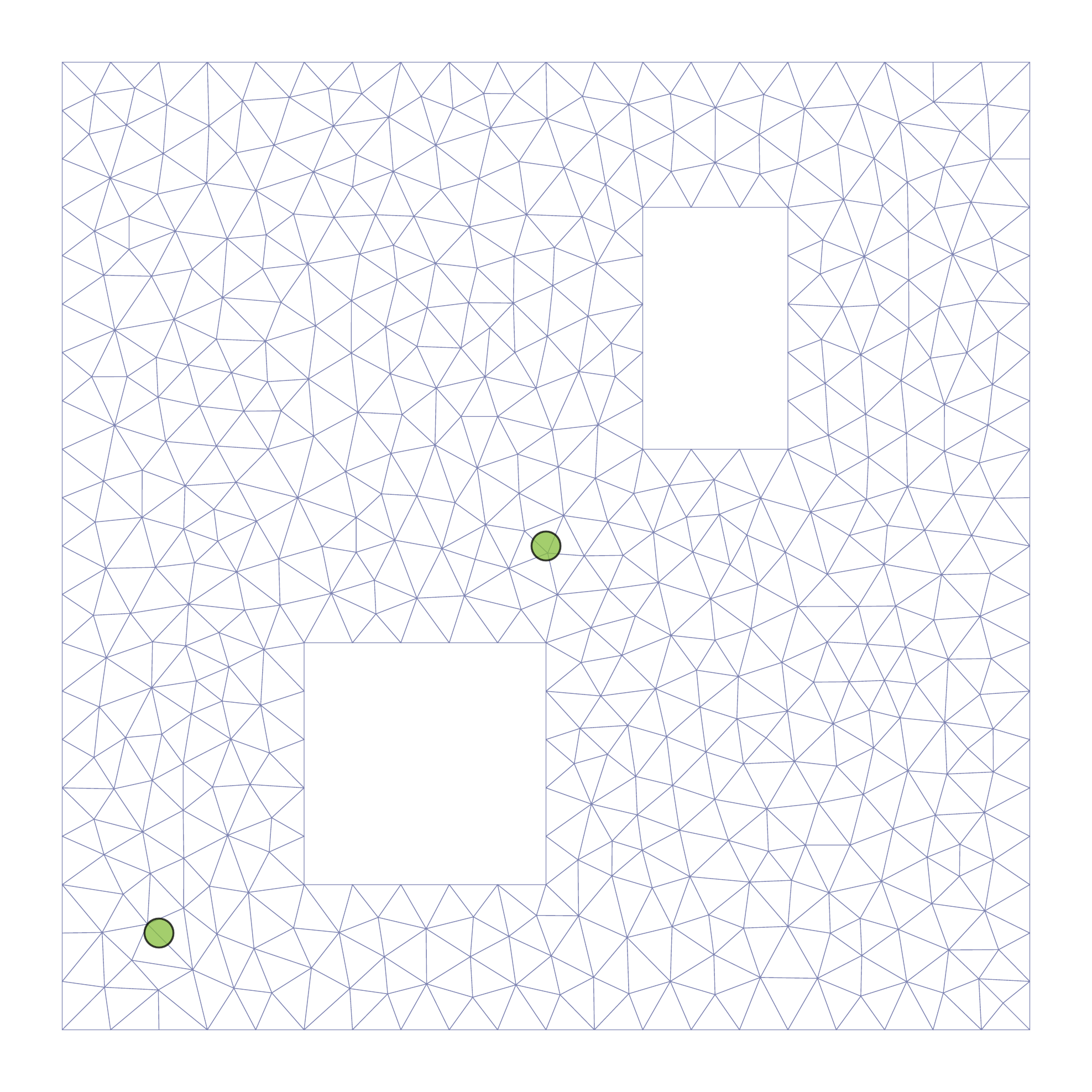}
        \includegraphics[width=0.24\textwidth]{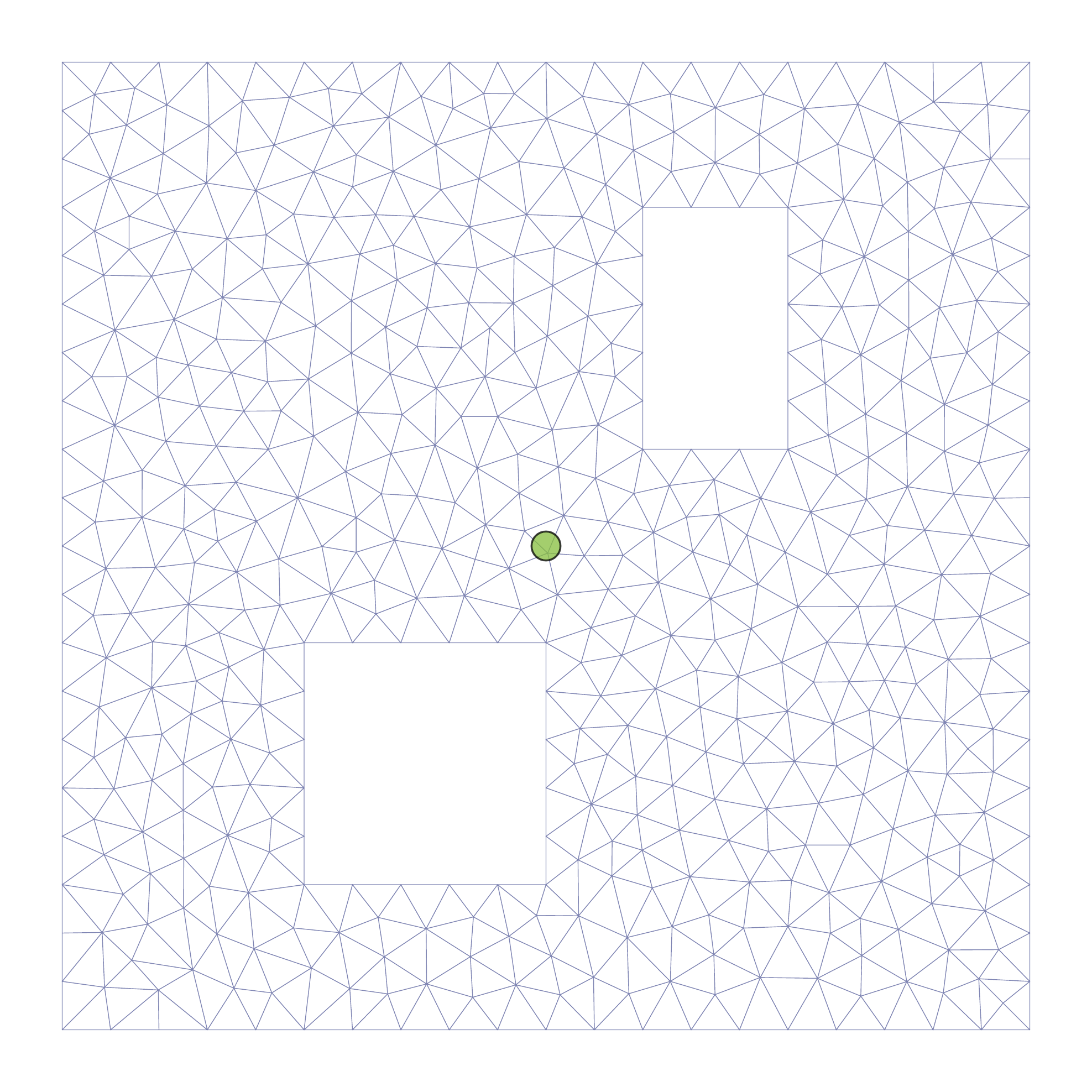}
        \includegraphics[width=0.24\textwidth]{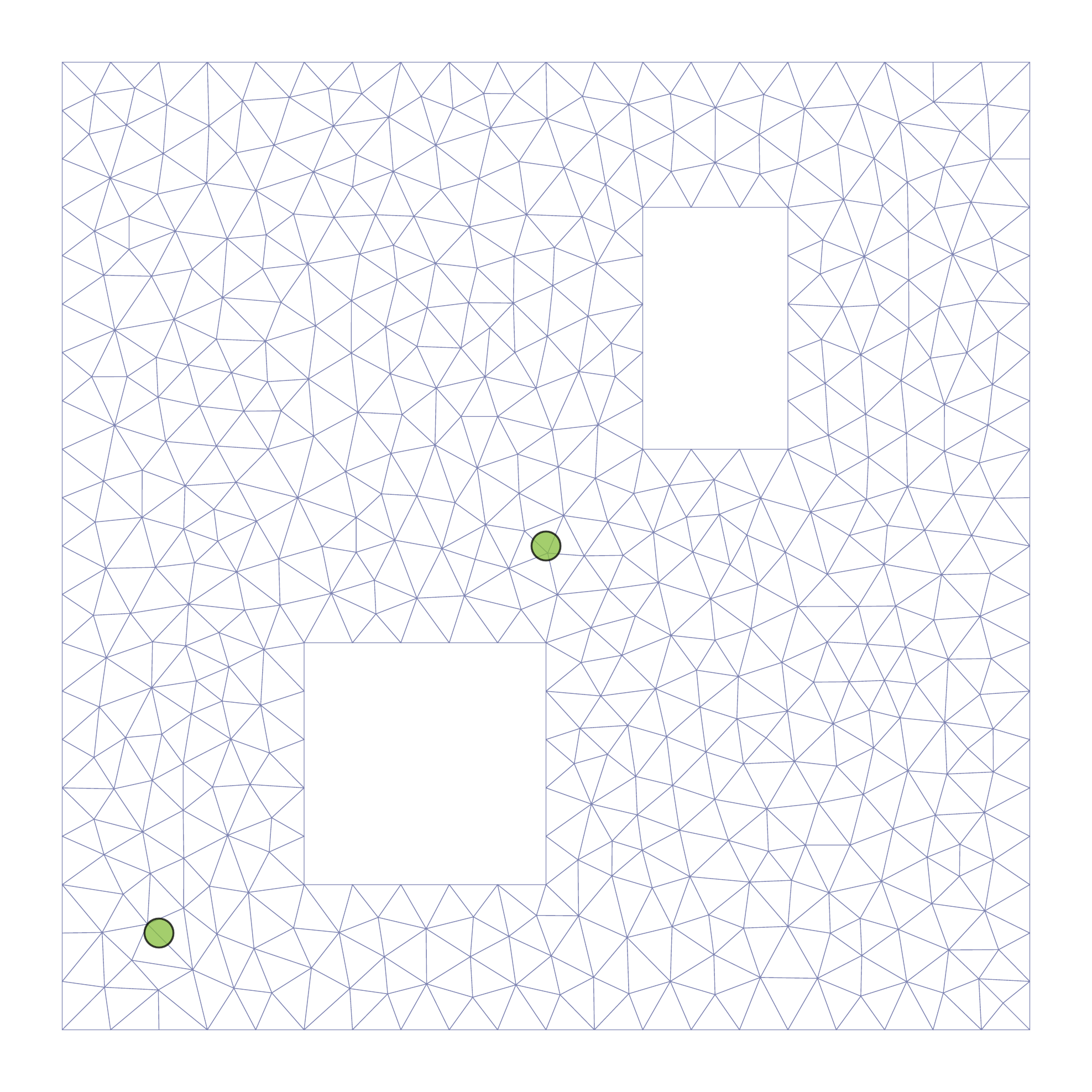}
        \includegraphics[width=0.24\textwidth]{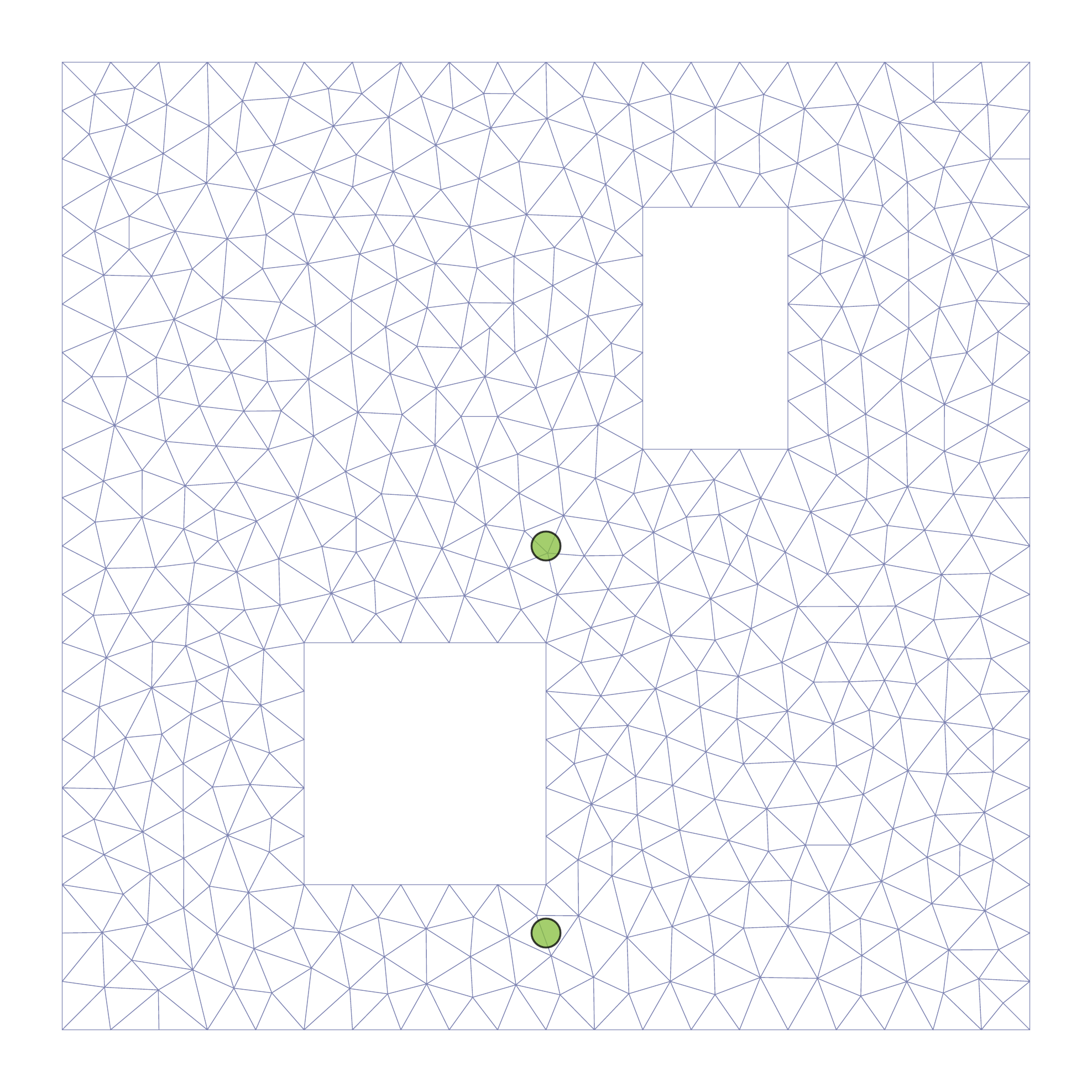}
          \caption{Results corresponding to those shown in~\Cref{fig:AD_NumSensors_5_L1_Weight_-10}.
            Here we show the global optimum design (left), compared with the unique designs ($3$ rightmost) in the sample 
            generated by the algorithm.
          }
        \label{fig:AD_NumSensors_5_L1_Weight_-10_Designs}
      \end{figure}

      \Cref{fig:AD_NumSensors_5_L1_Weight_-10_Iterations} shows the performance of \Cref{alg:Polyak_MaxMin_Stochastic} 
      over consecutive iterations. 
      Here we cannot visualize surfaces of the objective function values as we did in the two-dimensional 
      case discussed above. 
      Nevertheless, we show the value of the stochastic objective $\stochobj(\design, \uncertainparam)$ evaluated at 
      samples from the policy generated at the consecutive iterations of the optimization algorithm; see 
      \Cref{fig:AD_NumSensors_5_L1_Weight_-10_Iterations} (left).
      Note that at each iteration of the algorithm, the stochastic objective value is evaluated and plotted twice: 
      first when the policy is updated by the outer optimization routine and second when the uncertain parameter 
      is updated in the inner optimization routine. 
      \begin{figure}[htbp!]
      \center
        \includegraphics[width=0.48\textwidth]{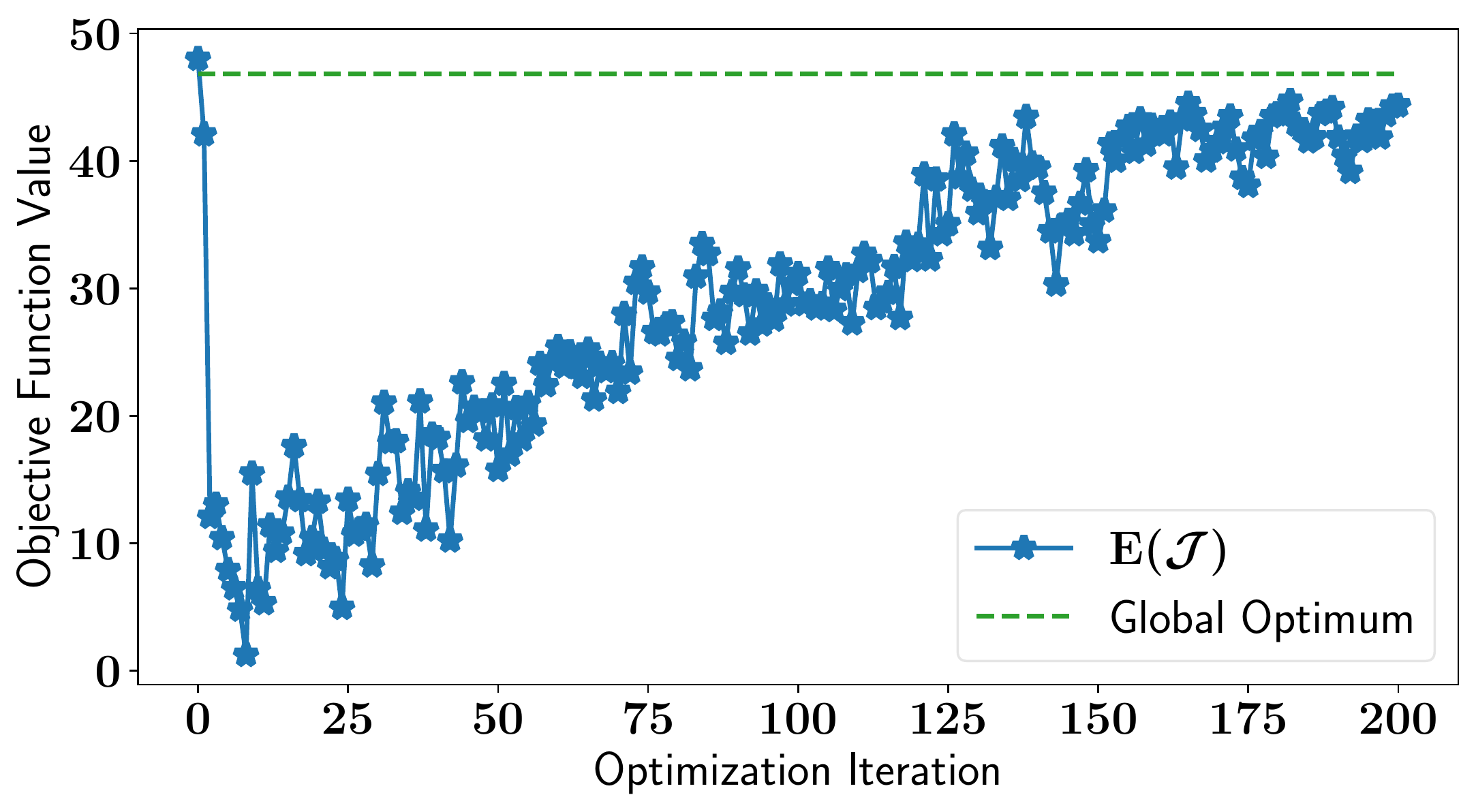}
        \includegraphics[width=0.48\textwidth]{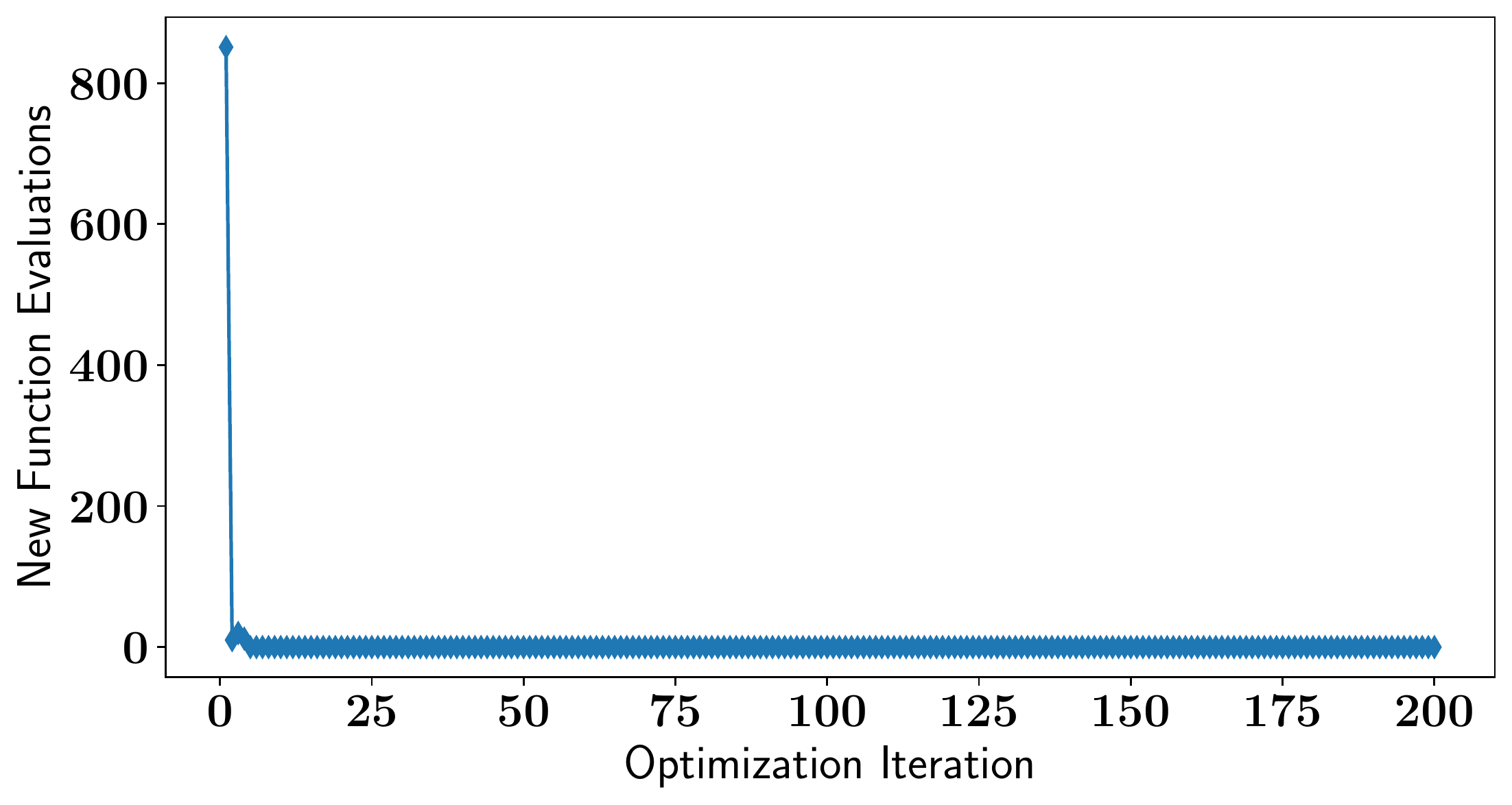}
          \caption{Results corresponding to those shown in~\Cref{fig:AD_NumSensors_5_L1_Weight_-10}.
            Left: value of the stochastic objective  $\stochobj(\design, \uncertainparam)$ evaluated at samples 
              from the policy generated at the consecutive iterations of the optimization algorithm.
            Right:  number of new evaluations of the objective $\obj$ incurred at each iteration of the algorithm,
              that is, the number of function evaluations at pairs $(\design, \uncertainparam)$ that the algorithm 
              has not inspected previously.
            \label{fig:AD_NumSensors_5_L1_Weight_-10_Iterations}
        }
      \end{figure}

      An important aspect of the stochastic learning approach employed by~\Cref{alg:Polyak_MaxMin_Stochastic} is 
      the fact that the algorithm tends to reutilize design samples it has previously visited as it moves toward a 
      degenerate policy, that is, an optimal solution, that presents a notable computational advantage of the algorithm.
      This is supported by~\Cref{fig:AD_NumSensors_5_L1_Weight_-10_Iterations} (right), which shows the number of 
      new function evaluations carried out at each iteration. 
      This is further discussed in~\Cref{subsubsec:efficiency} 
      where we describe an approach to keep track of $(\design, \uncertainparam)$ pairs that the algorithm inspects 
      that can be reutilized instead of reevaluating the objective at the recorded pairs.

      \commentout{
          A nominal value test can highlight the importance of accounting for misspecification and uncertainties while solving the 
          OED problem.
          \Cref{fig:AD_NumSensors_5_Nominal} shows the results of a nominal value test carried out with the present setup.
          The results show that a randomly selected nominal value of the uncertain parameter is not conservative in general.
          Specifically, \Cref{fig:AD_NumSensors_5_Nominal} shows that the nominal value is better on a small number of points (orange), 
          which we suspect is near the nominal value. The robust solution, however, is better on more parameter values (larger blue area).
          \sven{Please update the comment here.} 
          \begin{figure}[htbp!]
          \center
            \includegraphics[width=0.45\textwidth]{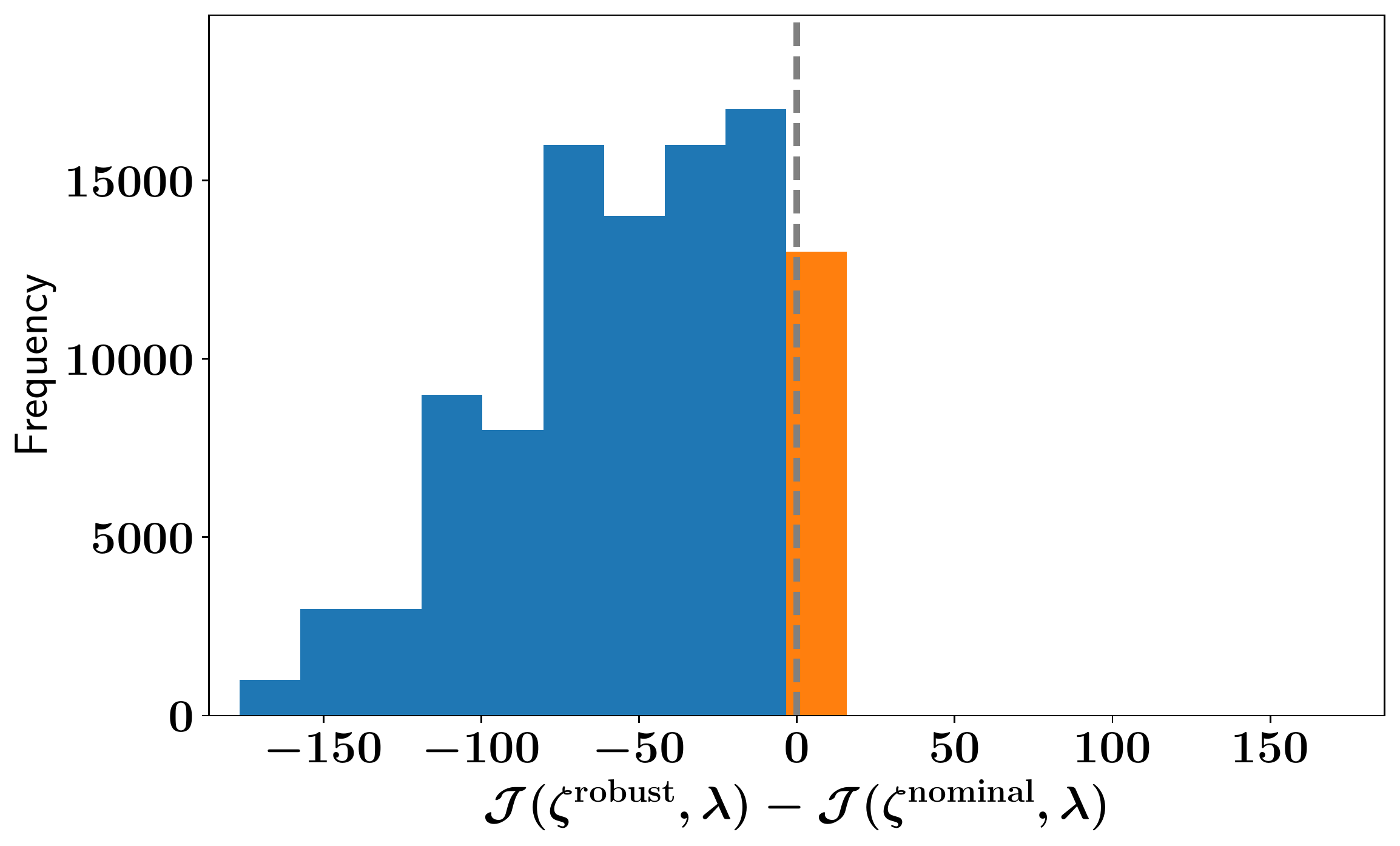}
              \caption{
                  Nominal value test corresponding to results in~\Cref{fig:AD_NumSensors_5_L1_Weight_-10}.
                  The robust solution $\design^{\rm robust}$ is obtained by solving the robust OED optimization 
                  problem~\eqref{eqn:robust_stochastic_binary_optimization}.
                  The solution $\design^{\rm nominal}$ is obtained by solving the stochastic (non-robust)
                  OED problem~\eqref{eqn:stochastic_OED_optimization} for a nominal value of the uncertain parameter 
                  $\uncertainparam=(0.03, 0.03, 0.03, 0.03, 0.03)\tran$.
                  The x-asis shows values of $\obj(\design^{\rm robust}, \uncertainparam) - \obj(\design^{\rm nominal}, \uncertainparam)$ 
                  evaluated at $25$ uniformly distributed points in each entry of the uncertain parameter $\uncertainparam$, 
                  resulting in $25^5=9765625$ sample points.
            }
            \label{fig:AD_NumSensors_5_Nominal}
          \end{figure}
        }

    \subsubsection{Results with 10 candidate sensors}
      A common case is to be limited to a budget $\budget$ of sensors when resources are limited.
      In this case one can enforce the budget constraint by setting the penalty term, for example, to 
      $\penaltyfunction{\design}:=\abs{ \wnorm{\design}{0} - \budget}$ and enforce this constraint by choosing 
      the value of the penalty parameter $\regpenalty$ large enough. 
      Here we set the number of candidate sensors to $\Nsens=10$ and the budget to $\budget=3$.
      In this case the number of possible design choices is $2^{10}$ with only $\ncr{10}{3}=120$ designs satisfying 
          the budget constraint. 
      Each of these possible designs can be evaluated by using any value of the uncertain parameter $\uncertainparam$.
      Results of this experiment are summarized in~\Cref{fig:AD_NumSensors_10_L0_Weight_-50_Budget_3_Bruteforce} 
      and show  behavior similar to that in the previous cases. 
      The algorithm returns a sample of designs with objective values almost identical to the global optimum 
      with similar sparsity level.
      %
      \begin{figure}[htbp!]
      \center
        \includegraphics[width=0.48\textwidth]{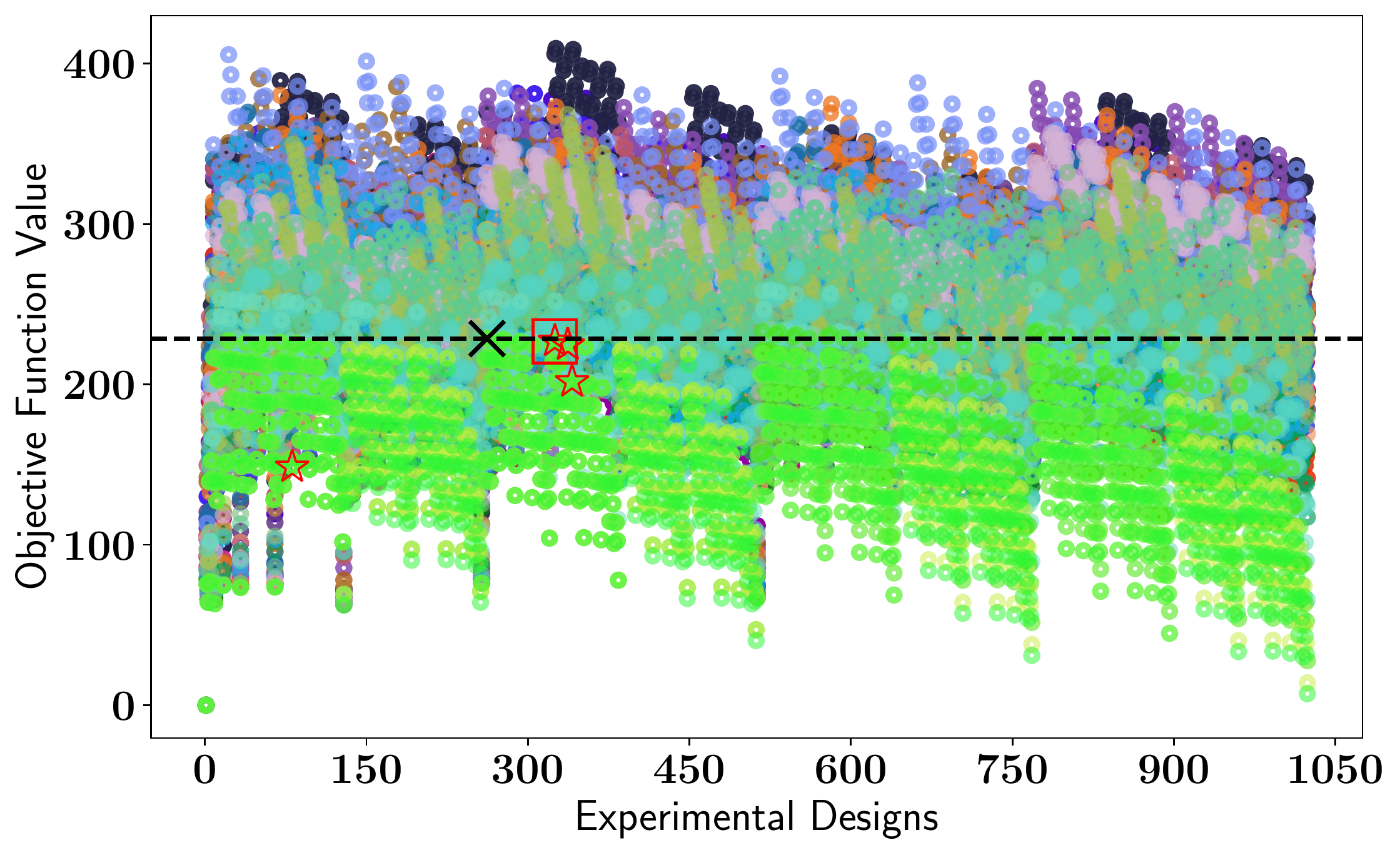}
        \includegraphics[width=0.48\textwidth]{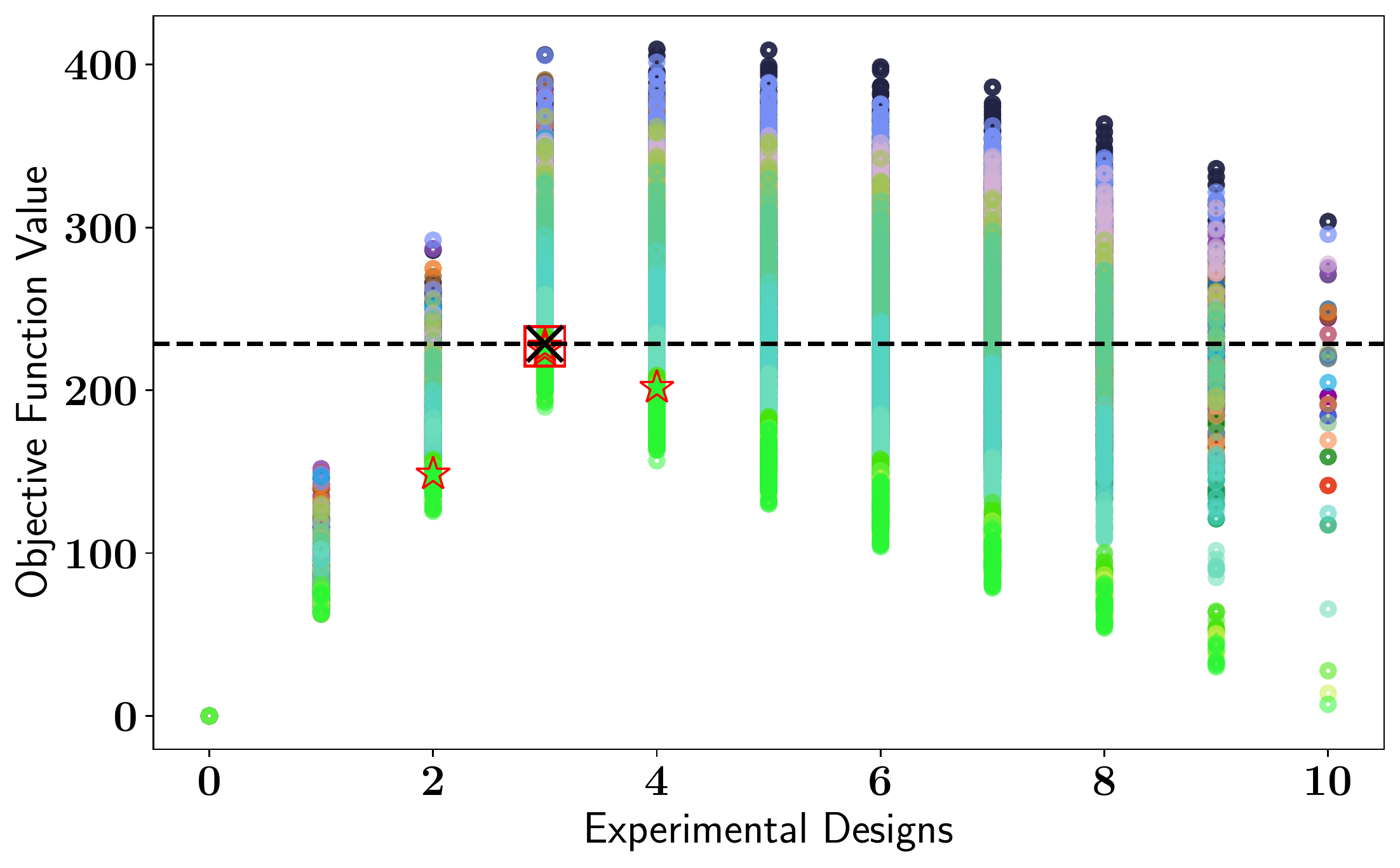}
          \caption{Similar to~\Cref{fig:AD_NumSensors_5_L1_Weight_-10}. 
          Here the number of candidate sensors is set to $\Nsens=10$, and a budget of $\budget=3$ active sensors
          is enforced by setting the penalty function to $\penaltyfunction{\design}:=\abs{ \wnorm{\design}{0} - 3}$ and 
          the penalty parameter  to $\regpenalty=50$.
          Left: the designs are indexed on the x-axis by using the enumeration scheme~\eqref{eqn:stochastic_objective}. 
          Right: the designs are grouped on the x-axis by the number of active sensors, that is, the number 
          of entries in $\design$ equal to $1$.
          }
        \label{fig:AD_NumSensors_10_L0_Weight_-50_Budget_3_Bruteforce}
      \end{figure}
      %

    \subsubsection{Efficiency and reutilization of sampled designs}
    \label{subsubsec:efficiency}
      \Cref{alg:Polyak_MaxMin_Stochastic} follows the approach presented in~\cite{attia2022stochastic} for solving the outer optimization problem over 
      the finite binary feasible domain $\{0, 1\}^{\Nsens}$. 
      As the algorithm proceeds toward an optimal solution, the activation probability $\hyperparam$ moves toward parameters of a degenerate Bernoulli distribution,
      that is, the activation probabilities move toward $\{0, 1\}$ for all entries of the design $\design$.
      The variance of a Bernoulli random variable is equal to $\hyperparam(1-\hyperparam)$, which means the variability is at its minimum of $0$ for 
      values of the activation probability closer to the bounds $\{0, 1\}$ and the variability is at its highest value at $\hyperparam=0.5$.
      Thus, as the algorithm proceeds toward the optimization variable, that is, the activation probability $\hyperparam$ moves to a degenerate distribution,
      the algorithm tends to resample designs  it has already utilized in previous iterations, thus yielding significant reduction in computational cost. 
      This was leveraged in~\cite{attia2022stochastic} by keeping track of the indexes of the sampled designs 
      $\design$ and the corresponding objective value $\obj(\design)$, for example in a dictionary. 
      At each iteration, when the value of the objective is required, this dictionary is inspected first, 
      and the value of the objective is calculated only if it is not available in this dictionary.

      A similar strategy is followed in this work and is utilized in~\Cref{alg:Polyak_MaxMin_Stochastic}. 
      Here we need to keep track of both the sampled designs $\design$ and the values of the uncertain 
      parameter $\uncertainparam$ along with the corresponding value of the objective $\obj(\design, \uncertainparam)$. 
      We use the following simple hashing approach. 
      The design and the uncertain parameter are augmented in a vector $\vec{h}=[\design\tran, \uncertainparam\tran]$,
      and a unique hash value is then created.
      %
      This hash value is set as the key in a dictionary, with the value equal to the objective $\obj(\design, \uncertainparam)$.
      This procedure is invoked each time the value of the objective $\obj$ is requested, thus enabling us to look 
      up previously calculated results first.
      This approach turns out to be very effective because it removes redundant calculations, leading to a considerable savings 
      in computational cost, especially for the outer optimization problem. 
      This is evident by results in~\Cref{fig:scalability_AD} that show the behavior of the proposed algorithm 
      over consecutive iterations for increasing problem cardinality $\Nsens$.
      Specifically,~\Cref{fig:scalability_AD} (left) shows the number of new objective function calls
      (that is, the number of evaluations of the objective $\obj$ for new combinations of $\design$ and $\uncertainparam$) 
      at consecutive iterations for increasing cardinality. For clarity, we truncate the figure at $8$ iterations 
      and note that for all remaining iterations the number of new functions is $0$ for all cardinalities $\Nsens$ in this
      experiment.
      \begin{figure}[htbp!]
      \center
        \includegraphics[width=0.49\textwidth]{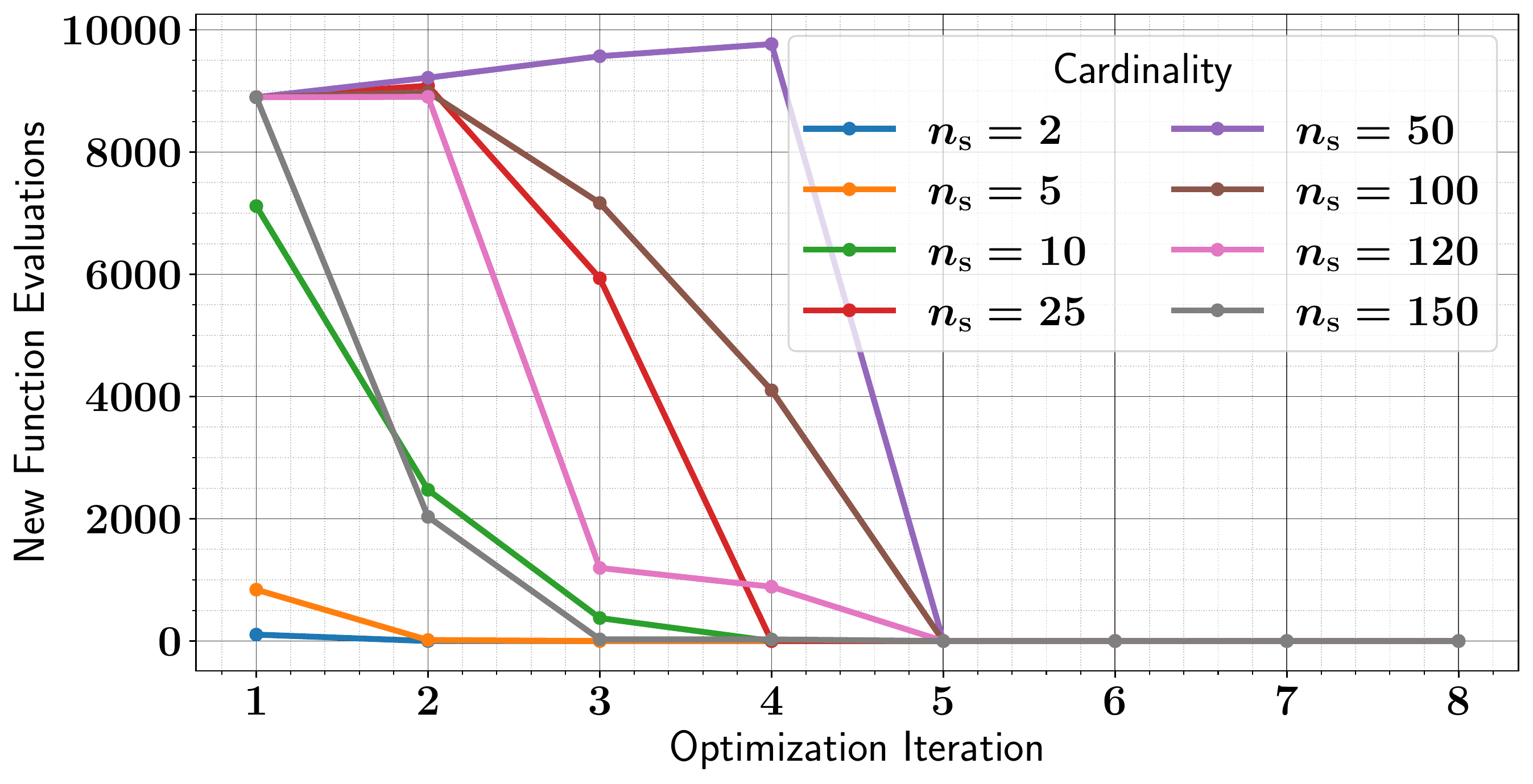}
          \hfill
        \includegraphics[width=0.49\textwidth]{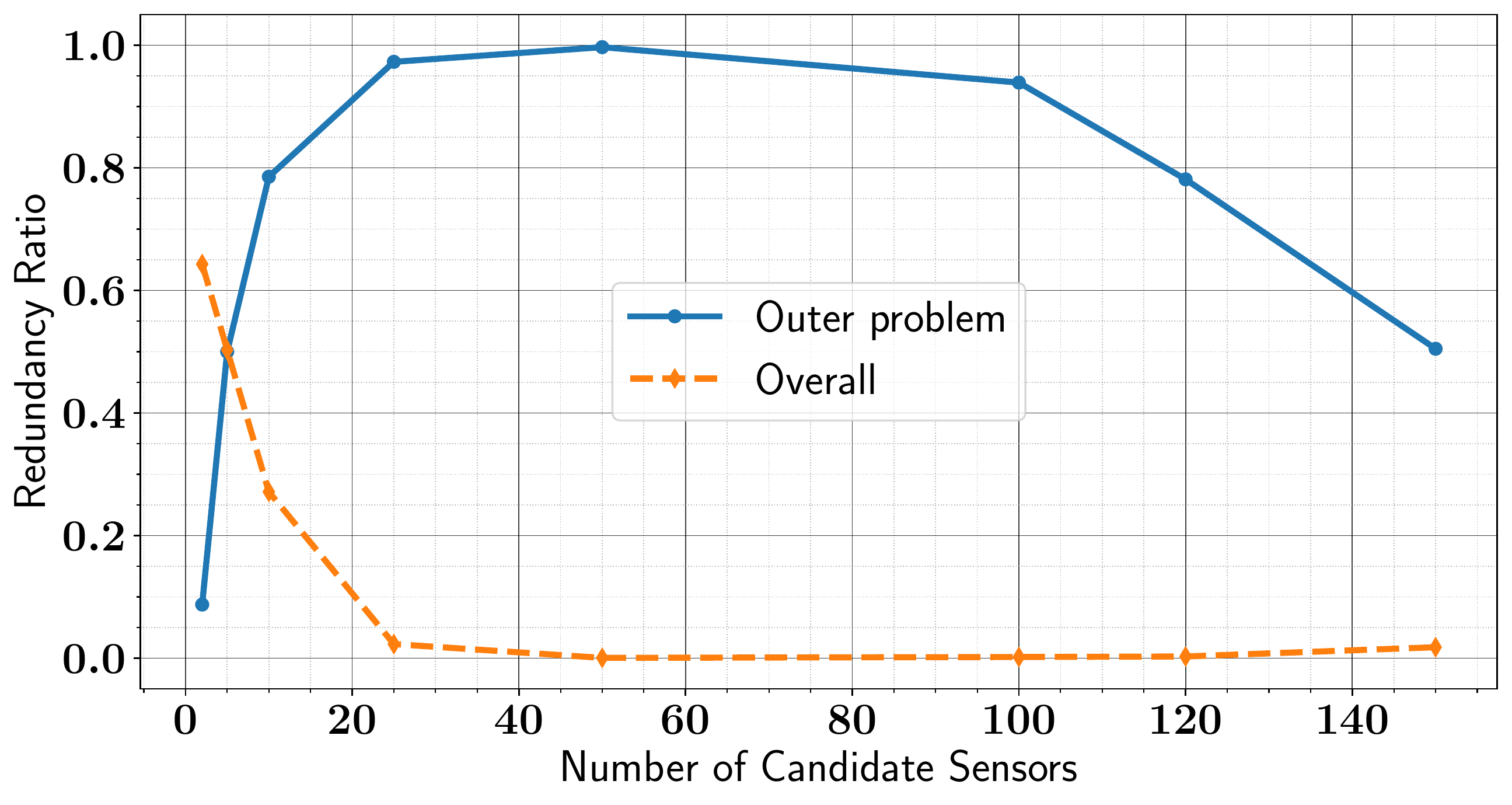}
          \caption{Results of~\Cref{alg:Polyak_MaxMin_Stochastic} for increasing cardinality $\Nsens$.
          Left: Number of new function evaluations $\obj(\design, \uncertainparam)$ incurred at each iteration.
          Right: Total redundancy ratio for the outer optimization problem and the overall redundancy ratio.
        }
        \label{fig:scalability_AD}
      \end{figure}
      \Cref{fig:scalability_AD} (right) shows the redundancy ratio defined as
      $1-\frac{\text{Number of unique pairs }(\design, \uncertainparam)}{\text{Total number of
        sampled designs}(\design, \uncertainparam)}$.
      The redundancy ratio is evaluated for the overall optimization procedure (both outer and inner optimization problems combined) 
      and is also shown for the outer optimization problem.
      These results are consistent with~\cite{attia2022stochastic} as the stochastic learning procedure used to update the Bernoulli 
      hyperparameter $\hyperparam$ tends to reutilize previously sampled designs as it moves closer to the optimal solution.
      The overall ratio here is dominated by evaluations of the objective in the line search 
      employed by the inner optimization routine; see~\Cref{subsec:optimization_setup}.
      However, the results in the two panels in~\Cref{fig:scalability_AD} together explain the efficiency of the optimization procedure 
      and show that the algorithm tends to navigate quickly to plausible (near-optimal) combinations of designs and uncertain 
      parameter values, in a very small number of steps and function evaluations, and then reutilizes those values until the 
      algorithm converges or terminates. 

      \commentout{
          \ahmed{I may add one more set of results indicating RMES and posterior FIM for the global, 
            optimal, and random designs with random samples of the uncertain parameter. Results are being generated.
          }
      }

\section{Discussion and Concluding Remarks}
\label{sec:conclusions}
      In this work we proposed a new approach for robust binary optimization problems 
      with application to OED for sensor placement.
      The approach accurately formulates the robust optimization problem as a stochastic program 
      and solves it following an efficient sampling-based stochastic optimization approach. 

      The approach provided in this work inherits both advantages and limitations of the stochastic 
      binary optimization approach~\cite{attia2022stochastic}.
      Three main challenges are associated with this work.
      First is the optimal choice of the learning rate of the outer optimization problem. 
      This, in fact, is a common hurdle prevalent in most stochastic optimization routines.
      An empirical approach is typically followed to choose the learning rate and is followed here. 
      Specifically, a set of learning rates are used to solve the optimization problem, and the rate 
      that results in favorable behavior is used. 
      However, such an approach is not practical for large-dimensional problems, 
      and automatic tuning of such parameters becomes crucial. This will be considered in future work.
      Second, the proposed solution algorithm can generate designs with objective values close to the 
      global optimum. It can, however, be entrapped in a local optimum.
      The entrapment in a local optimum is expected when the outer optimization problem converges quickly to 
      a binary design (or a degenerate policy with binary parameter $\hyperparam$) 
      or due to nonconvexity of the expectation surface, that is, the stochastic objective function. 
      For the former case, when the algorithm converges quickly to a degenerate policy (activate probability), 
      in consequent iterations the outer maximization problem does not escape that value 
      (the degenerate policy), and the inner optimization problem updates the value of the uncertain parameter only 
      at that corner point of the domain.
      This can be ameliorated by taking only a small number of steps (or a smaller stepsize/learning rate) at 
      each iteration to stay in the interior of the domain while updating the sample of the uncertain parameter. 
      In the latter case, because of the nonuniqueness of the global optimum solution or potential nonconvexity of 
      the expectation surface on the interior of the probability domain $(0, 1)$, the algorithm can be entrapped
      at  a suboptimal nondegenerate policy. This is ameliorated by adding random perturbation to the policy
      when the algorithm finds a local optimum.
      %
      Third, while the numerical results indicate good performance of the optimization approach, 
      there are no global optimality guarantees. 
      Global optimality is challenging and will be investigated in separate work, for example, by modeling 
      the activation probabilities themselves as random variables and considering efficient sampling approaches such 
      as the Hamiltonian Monte Carlo, which can be employed for global optimization.

\appendix

\section{Gradient Derivation: Robust A-Optimality}
\label{app:sec:Gradients}
    In this section we provide a detailed derivation of the gradient required for
    implementing~\Cref{alg:Polyak_MaxMin_Stochastic}.
    Specifically,~\Cref{app:subsec:Robust_A-opt_utility_gradient:param} provides a derivation of the gradient
    of the utility function with respect to the uncertain parameter, namely, 
    the parameterized observation error covariance matrix.
    This gradient is summarized by
    \eqref{eqn:utility_A_opt_gradient_obs_noise_relaxed} 
    and is utilized by
    Step~\ref{algstep:Polyak_MaxMin_Stochastic_obs_noise_gradient}
    of~\Cref{alg:Polyak_MaxMin_Stochastic}.
    %
    First we provide, in~\Cref{lemma:pseudoinv_deriv}, a closed form of the pointwise 
    derivative of the design-weighted precision matrix $\wdesignmat(\design, \uncertainparam)$.
    This identity is valid for the general weighting scheme~\eqref{eqn:pointwise_weighted_precision} 
    and thus applies to the special case of binary designs defined by~\eqref{eqn:PrePost_weighted_Precision}.

    \begin{lemma}\label{lemma:pseudoinv_deriv}
        Given the definition~\eqref{eqn:pointwise_weighted_precision} of the weighted precision 
        matrix $\wdesignmat(\design)$, 
        it follows that 
        \begin{equation}\label{eqn:pseudoinv_deriv}
            \partial\wdesignmat(\design, \uncertainparam)
            = - \wdesignmat(\design, \uncertainparam)
                \partial \left(\designmat(\design) \odot \Cobsnoise(\uncertainparam)\right)
            \wdesignmat(\design, \uncertainparam) \,.
        \end{equation}
    \end{lemma}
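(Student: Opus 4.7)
The plan is to exploit the explicit representation of $\wdesignmat$ furnished by~\eqref{eqn:pointwise_weighted_precision}, namely $\wdesignmat(\design,\uncertainparam) = \proj\tran\bigl(\proj\,(\designmat(\design)\odot\Cobsnoise(\uncertainparam))\,\proj\tran\bigr)^{-1}\proj$, and thereby reduce differentiation of the pseudoinverse to differentiation of an ordinary (genuinely invertible) matrix inverse. The crucial observation is that the sparse row-selection matrix $\proj = \proj(\design)$ depends only on the support of the (fixed) design $\design$ and so is independent of $\uncertainparam$; moreover, $\proj$ is constructed precisely so that the sandwiched block is a symmetric positive-definite principal submatrix of $\designmat(\design)\odot\Cobsnoise$, hence invertible in the classical sense.

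First I would introduce the abbreviations $\mat{A}(\uncertainparam) := \designmat(\design)\odot\Cobsnoise(\uncertainparam)$ and $\mat{B}(\uncertainparam) := \proj\,\mat{A}(\uncertainparam)\,\proj\tran$, so that $\wdesignmat = \proj\tran\,\mat{B}^{-1}\,\proj$ and, because $\proj$ is constant, $\partial\wdesignmat = \proj\tran\,(\partial\mat{B}^{-1})\,\proj$. Next I would apply the classical identity $\partial\mat{B}^{-1} = -\mat{B}^{-1}(\partial\mat{B})\mat{B}^{-1}$ together with the obvious $\partial\mat{B} = \proj\,(\partial\mat{A})\,\proj\tran$, and then refactor the resulting expression as
$\partial\wdesignmat = -\bigl(\proj\tran\mat{B}^{-1}\proj\bigr)(\partial\mat{A})\bigl(\proj\tran\mat{B}^{-1}\proj\bigr) = -\wdesignmat\,(\partial\mat{A})\,\wdesignmat$,
which is precisely~\eqref{eqn:pseudoinv_deriv} after unfolding $\mat{A}$.

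There is no serious obstacle here; the only subtle point is justifying why the general Moore--Penrose differentiation formula collapses to the classical $-\mat{B}^{-1}(\partial \mat{B})\mat{B}^{-1}$ identity despite the presence of a pseudoinverse in the definition of $\wdesignmat$. The route above bypasses this entirely by working with the invertible reduced block $\mat{B}$. Alternatively, one may appeal to the general pseudoinverse derivative and observe that, as $\uncertainparam$ varies with $\design$ held fixed, the rank of $\designmat(\design)\odot\Cobsnoise(\uncertainparam)$ is constant, so the projector correction terms involving $(\mat{I}-\mat{A}\mat{A}\pseudoinv)$ and $(\mat{I}-\mat{A}\pseudoinv\mat{A})$ vanish, again leaving~\eqref{eqn:pseudoinv_deriv}.
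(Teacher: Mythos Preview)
Your proof is correct and rests on the same core idea as the paper's: reduce to the invertible principal submatrix and invoke the classical identity $\partial \mat{B}^{-1} = -\mat{B}^{-1}(\partial \mat{B})\mat{B}^{-1}$. The paper, however, executes this via an explicit two-case argument: first the interior case $\design \in (0,1]^{\Nsens}$, where the pseudoinverse is a genuine inverse and the formula is immediate; then the boundary case where some $\design_i = 0$, handled by permuting the zero entries into a leading block, writing both $\designmat\odot\Cobsnoise$ and its pseudoinverse as $2\times 2$ block matrices (with zero blocks and an invertible lower-right block), and verifying~\eqref{eqn:pseudoinv_deriv} by direct block multiplication. Your route is more economical: by working throughout with the selection matrix $\proj$ already supplied in~\eqref{eqn:Shur_weighted_Precision}, you treat both cases at once and never write out block matrices explicitly. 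The trade-off is that the paper's version makes the block structure fully visible, which some readers may find more concrete, whereas yours is shorter and exposes more clearly \emph{why} the pseudoinverse differentiates like an ordinary inverse here (constant rank, constant $\proj$). Your closing remark about the rank-constancy argument and the vanishing projector corrections in the general Golub--Pereyra formula is a nice alternative justification that the paper does not mention.
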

    \begin{proof}
        We consider two cases. 
        First, let $\design\in(0,  1]^{\Nsens}$. 
        In this case
        $
        \wdesignmat
            = \inverse{\designmat\odot\Cobsnoise}\,, 
        $ and thus, by applying the rule of the derivative of a matrix inverse, it follows that
        $
        \partial\wdesignmat
            = - \wdesignmat
            \partial \wdesignmat\inv 
            \wdesignmat 
            = - \wdesignmat
            \partial \left(\designmat \odot\Cobsnoise \right) 
            \wdesignmat \,.
        $ 

        Second, consider the case where the design $\design$ is at the boundary of the relaxed
        domain $[0, 1]^{\Nsens}$ with some entries of the design $\design$ are equal to zero.
        Following the same argument in~\cite[Appendix A]{attia2022optimal}, 
        let us represent the noise covariance matrix $\Cobsnoise$ and the weighting
        matrix $\designmat$ as block matrices:
        \begin{equation}
          \Cobsnoise = 
            \begin{bmatrix} \mat{A} & \mat{B} \\ \mat{B}\tran & \mat{D}
            \end{bmatrix}\,; \qquad
            \designmat =
            \begin{bmatrix} \mat{W_A} & \mat{W_B} \\ \mat{W}\tran_{\mat{B}} & \mat{W_D}
            \end{bmatrix}\,; \qquad
            \designmat \odot \Cobsnoise
            =
            \begin{bmatrix} 
                \mat{A}\odot\mat{W_A} & \mat{B}\odot\mat{W_B} \\ 
                \mat{B}\tran\odot\mat{W}\tran_{\mat{B}} & \mat{D}\odot\mat{W_D}
            \end{bmatrix}\,,  
        \end{equation}
        where we assume that both $\mat{A},\, \mat{W_A}$ correspond to elements of 
        $\design$ that are equal to $0$; permutation can be used to reach this form. 
        In this case, by definition of the weights~\eqref{eqn:design_weights}, 
        $\mat{W_A}=\mat{0}$, and $\mat{W_B}=\mat{0}$ 
        \begin{equation}\label{eqn:pseudoinv_weight}
        \wdesignmat = \pseudoinverse{\designmat \odot \Cobsnoise}
            = \begin{bmatrix} \mat{0} & \mat{0} \\ \mat{0} & \inverse{\mat{D}\odot\mat{W_D}} 
          \end{bmatrix}\,.
        \end{equation}
        Thus in this case
        \begin{equation}\label{eqn:partial_pseudoinv_weight}
          \partial \wdesignmat
          = \partial\pseudoinverse{\Cobsnoise \odot \designmat} 
            = \begin{bmatrix} \mat{0} & \mat{0} \\ \mat{0} & 
                - \inverse{\mat{D}\odot\mat{W_D}}
                    \partial \left(\mat{D}\odot\mat{W_D}\right) 
                    \inverse{\mat{D}\odot\mat{W_D}}
          \end{bmatrix}\,,
        \end{equation}
        and
        \begin{equation}\label{eqn:partial_weight}          
            \partial\left(\Cobsnoise \odot \designmat\right)
            = \begin{bmatrix} \mat{0} & \mat{0} \\ \mat{0} & 
                \partial \left(\mat{D}\odot\mat{W_D}\right) 
          \end{bmatrix}\,.
        \end{equation}

        Thus, \eqref{eqn:pseudoinv_deriv} follows immediately by combining~\eqref{eqn:pseudoinv_weight} 
        and~\eqref{eqn:partial_weight}: 
        \begin{equation}
            \wdesignmat \partial \left(\designmat\odot\Cobsnoise\right) \wdesignmat
            = 
            \begin{bmatrix} 
                \mat{0} & \mat{0} \\ \mat{0} & \inverse{\mat{D}\odot\mat{W_D}} 
            \end{bmatrix}
            \begin{bmatrix} \mat{0} & \mat{0} \\ \mat{0} & 
                \partial \left(\mat{D}\odot\mat{W_D}\right) 
            \end{bmatrix}
            \begin{bmatrix} 
                \mat{0} & \mat{0} \\ \mat{0} & \inverse{\mat{D}\odot\mat{W_D}} 
            \end{bmatrix}
            = 
            - \partial \wdesignmat
            \,,
        \end{equation}
        which completes the proof.
        \end{proof}

        Note that~\Cref{lemma:pseudoinv_deriv} holds whether the derivative is taken with respect to the 
        relaxed design (through the weight matrix $\designmat$)
        or the uncertain parameter (through the observation noise matrix $\Cobsnoise$).

  \subsection{Derivative with respect to the uncertain parameter}
  \label{app:subsec:Robust_A-opt_utility_gradient:param}
    %
    Here we derive the gradient 
    $
     \nabla_{\uncertainparam} \obj(\design,\uncertainparam)
      = \nabla_{\uncertainparam} \utilityfunc(\design,\uncertainparam)$
    required for numerically solving the optimization problem in
    Step~\ref{algstep:Polyak_MaxMin_Stochastic_param_opt} and Step~\ref{algstep:Polyak_MaxMin_Stochastic_inner_min}
    of~\Cref{alg:Polyak_MaxMin_Stochastic}.
    We utilize the fact that the penalty function $\penaltyfunc$ is
    independent from the uncertainty parameter $\uncertainparam$.
    For the uncertain parameter
    $\uncertainparam=\uncertainparam^{\rm noise}$, 
    \begin{equation}\label{eqn:utility_A_opt_derivative_obs_noise_relaxed_app}
      \begin{aligned}
        \del{ \utilityfunc(\design,\uncertainparam^{\rm noise})}{\uncertainparam^{\rm noise}_j} 
          &= \del{ \Trace{ 
              \F\adj \wdesignmat(\design; \uncertainparam^{\rm noise} ) \F
             +\Cparampriormat\inv
            } 
            }{{\uncertainparam^{\rm noise}_j}} \\
          &\stackrel{\eqref{eqn:pseudoinv_deriv}}{=} 
          - \Trace{  
            \F\adj
              \wdesignmat(\design;\uncertainparam^{\rm noise})
                \left( \designmat(\design) \odot 
                 \del{  \Cobsnoise(\uncertainparam^{\rm noise} ) }{{\uncertainparam^{\rm noise}_j}}  
                  \right)
              \wdesignmat(\design;\uncertainparam^{\rm noise})
                \F
            }
          \,,
      \end{aligned}
    \end{equation}
    which results in the final form of the gradient:
    \begin{equation}\label{eqn:utility_A_opt_gradient_obs_noise_relaxed_app}
        \nabla_{\uncertainparam^{\rm noise}} \utilityfunc(\design,\uncertainparam^{\rm noise}) 
        = - \sum_{j}^{}{
          \Trace{  
            \F\adj
              \wdesignmat(\design;\uncertainparam^{\rm noise})
                \left( \designmat(\design) \odot 
                 \del{  \Cobsnoise(\uncertainparam^{\rm noise} ) }{{\uncertainparam^{\rm noise}_j}}  
                  \right)
              \wdesignmat(\design;\uncertainparam^{\rm noise})
                \F
            } 
            \vec{e}_j
          }
        \,,
    \end{equation}
    where $\vec{e}_j$ the $j$th unit vector in $\Rnum^{n}$, and $n$ is the dimension of $\uncertainparam$.
    %



\bibliographystyle{siamplain}
\bibliography{references}


\iftrue
\null \vfill
  \begin{flushright}
  \scriptsize \framebox{\parbox{5.5in}{
  The submitted manuscript has been created by UChicago Argonne, LLC,
  Operator of Argonne National Laboratory (``Argonne"). Argonne, a
  U.S. Department of Energy Office of Science laboratory, is operated
  under Contract No. DE-AC02-06CH11357. The U.S. Government retains for
  itself, and others acting on its behalf, a paid-up nonexclusive,
  irrevocable worldwide license in said article to reproduce, prepare
  derivative works, distribute copies to the public, and perform
  publicly and display publicly, by or on behalf of the Government.
  The Department of
  Energy will provide public access to these results of federally sponsored research in accordance
  with the DOE Public Access Plan. http://energy.gov/downloads/doe-public-access-plan. }}
  \normalsize
  \end{flushright}
\fi


\end{document}